\newtheorem{theorem}{Theorem}[section]
\newtheorem{lemma}[theorem]{Lemma}
\newtheorem{claim}[theorem]{Claim}
\newtheorem{proposition}[theorem]{Proposition}
\newtheorem{mainthm}{Theorem}
\theoremstyle{definition}
\newtheorem{definition}[theorem]{Definition}
\theoremstyle{remark}
\newtheorem*{remark*}{Remark}
\newtheorem{remark}[theorem]{Remark}
\newcommand{\wfhBold}[3]{
    \boldsymbol{\operatorname{WFH}}\left( #1,\,#2 \to #3 \right)
}
\newcommand{\wfhBoldSmallPar}[3]{
    \boldsymbol{\operatorname{WFH}}( #1,\,#2 \to #3)
}
\newcommand{\wfhPlain}[3]{
    \operatorname{WFH}\left( #1,\,#2 \to #3 \right)
}
\newcommand{\hmBold}[2]{
        \boldsymbol{\operatorname{HM}}\left(#1\,;#2\right)
}
\newcommand{\hmBoldSmallPar}[2]{
        \boldsymbol{\operatorname{HM}}(#1\,;#2)
}
\newcommand{\eps}{\varepsilon}
\begin{document}
\title{Poisson Bracket Invariants and Wrapped Floer Homology}
\author{Yaniv Ganor$^1$}
\footnotetext[1]{This work began when the author was a post-doc at the Technion. It was Partially supported by the Israel Science Foundation grant 1715/18, partially supported by Technion scholarship funds, and supported in part at the Technion by a fellowship from the Lady Davis Foundation}

\date{}

\maketitle

\begin{abstract}
    The Poisson bracket invariants, introduced by Buhovsky, Entov, and Polterovich and further studied by Entov and Polterovich, serve as invariants for quadruples of closed sets in symplectic manifolds. Their nonvanishing has significant implications for the existence of Hamiltonian chords between pairs of sets within the quadruple, with bounds on the time-length of these chords. In this work, we establish lower bounds on the Poisson bracket invariants for certain configurations arising in the completion of Liouville domains. These bounds are expressed in terms of the barcode of wrapped Floer homology. Our primary examples come from cotangent bundles of closed Riemannian manifolds, where the quadruple consists of two fibers over distinct points and two cosphere bundles of different radii, or a single cosphere bundle and the zero section.
\end{abstract}

\tableofcontents

\section{Introduction}

This paper examines the phenomenon of interlinking \cite{entov2017lagrangian, entov2022legendrian}, via the lens of Wrapped Floer homology, through the connection between interlinking and Poisson bracket invariants (introduced in \cite{buhovsky2012poisson, entov2017lagrangian}). Let us explain.

Let $(M,\omega)$ be a symplectic manifold. Let $(Y_0,Y_1)$ be a pair of disjoint sets: $Y_0\cap Y_1 = \emptyset$. We say that an autonomous Hamiltonian $H\colon M \to \mathbb{R}$, $\Delta$-separates $Y_0$ and $Y_1$ if 
\[ \inf_{Y_1} H - \sup_{Y_0} H = \Delta > 0. \]
Note that in the definition the order of the sets is important. The Hamiltonian $H$ is bigger on $Y_1$ than on $Y_0$.

The following phenomenon of interlinking was introduced in \cite{entov2017lagrangian}:
\begin{definition}
    Let $(X_0,X_1)$, $(Y_0,Y_1)$ be two pairs of disjoint sets: \[X_0\cap X_1 = Y_0 \cap Y_1 = \emptyset\]. We say that the pair $(Y_0,Y_1)$ \emph{autonomously $\kappa$-interlinks} the pair $(X_0,X_1)$, with $\kappa>0$, if for every complete autonomous Hamiltonian which $\Delta$-separates $Y_0$ and $Y_1$, the corresponding Hamiltonian flow which admits a chord from $X_0$ to $X_1$ of time-length $\le \kappa / \Delta$.
\end{definition}
Our convention for the Hamiltonian vector field is $\omega(X_H,\cdot) = -dH(\cdot)$.

In this work we study interlinking of certain configurations in completions of Liouville domains, through the relation between interlinking and Poisson bracket invariants, introduced by \cite{entov2017lagrangian}. We recall their definition in Definition \ref{def:introPoissonBracketDef} in the introduction and in Section \ref{sec:poissonBracketInvariants}.

Our method is to provide lower bounds on the Poisson bracket invariants via wrapped Floer homology, a Floer-type invariant associated to pairs of certain Lagrangians in Liouville domains. See \cite{alves2019dynamically} for a construction and relevant properties of wrapped Floer homology of pairs of Lagrangians.

The main example for which our method applies is that of cotangent fibers in a cotangent bundle.
Let $(N,g)$ be a closed Riemannian manifold. The metric $g$ on $M$ induces a bundle isomorphism between $TN$ and $T^*N$, hence $g$ defines a bundle metric on $T^*N$, through this isomorphism.

We turn to state our result on cotangent bundles.
\begin{mainthm}\label{thm:cotangentInterlinking}
Denote by $S_r^*N$ the cosphere bundle of radius $r$ inside $T^*N$. Pick $x,y\in N$, two points of distance $d$. Then, denoting by $T^*_q N$ the cotangent fiber over the corresponding point $q$, we have that for all $0<a<b$:
\begin{enumerate}
    \item 
    The pair $\left(S^*_a N, S^*_b N\right)$ autonomously $d\cdot \left(b-a\right)$-interlinks the pair $\left(T^*_x N, T^*_y N\right)$, and vice versa.
    \item 
    The pair $\left(0^{}_{T^*\!N}, S^*_a\right)$ autonomously $d\!\cdot\! a$-interlinks the pair $\left(T^*_x N, T^*_y N\right)$, and vice versa. Here, $0^{}_{T^*\!N}$ denotes the zero section.
\end{enumerate}
\end{mainthm}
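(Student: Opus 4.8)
The plan is to deduce Theorem~\ref{thm:cotangentInterlinking} from a general lower bound on the Poisson bracket invariants $\wfhPlain{\cdot}{\cdot}{\cdot}$ in terms of the wrapped Floer barcode, which in turn implies the interlinking statements via the mechanism of \cite{entov2017lagrangian} relating non-vanishing of the Poisson bracket invariant to the existence of Hamiltonian chords with time-length bounds. Concretely, I would first recall (or establish) the inequality of the form: if $(X_0,X_1)$ are exact Lagrangians in the completion $\widehat{W}$ of a Liouville domain whose wrapped Floer homology $\operatorname{HW}(X_0,X_1)$ has a bar of a certain length $\beta$ in a suitable action/length filtration, and $(Y_0,Y_1)$ are the level sets $\{h = c_0\}$, $\{h = c_1\}$ of the natural ``radial'' Hamiltonian $h$ (the one generating the Liouville/Reeb flow in the cylindrical end), then $\operatorname{pb}$-type invariant of the quadruple is bounded below by $\beta/(c_1-c_0)$, or rather the interlinking constant $\kappa$ equals that barcode length. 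This is the heart of the paper and I would expect it to be proved earlier in the text; here I would simply invoke it.

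Granting that general principle, the proof of Theorem~\ref{thm:cotangentInterlinking} becomes a computation of the relevant wrapped Floer barcode for the cotangent bundle $T^*N$ with the two cotangent fibers $T^*_xN$ and $T^*_yN$, and the identification of the cosphere bundles $S^*_aN$, $S^*_bN$ (resp.\ the zero section) with level sets of the radial Hamiltonian $h(q,p) = |p|_g$ (or $\tfrac12|p|_g^2$; I would fix the normalization so that $S^*_rN = \{h = r\}$ and $0_{T^*N} = \{h=0\}$). The key input is the theorem of Abbondandolo--Schwarz / Abouzaid identifying $\operatorname{HW}(T^*_xN, T^*_yN)$ with the homology of the space of paths from $x$ to $y$ in $N$, with the action filtration corresponding to the length (or energy) filtration on path space; under this isomorphism the Reeb chords between the two fibers are exactly the geodesics from $x$ to $y$, graded by length. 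Since $x\ne y$ and $N$ is closed, the path space $\mathcal{P}_{x,y}(N)$ is nonempty and has nontrivial homology in degree $0$ coming from the connected component of a minimizing geodesic; the shortest geodesic has length $d = \operatorname{dist}(x,y)$, and the filtered homology $\operatorname{HW}^{<c}$ vanishes for $c < d$ while $\operatorname{HW}^{<c} \to \operatorname{HW}$ is nonzero for $c > d$. This produces a bar whose left endpoint is $d$ — equivalently the relevant barcode length across the window between the two level sets $\{h = a\}$ and $\{h = b\}$ is $d$, giving the interlinking constant $d\cdot(b-a)$ in part (1) and $d\cdot a$ in part (2) (the window from the zero section at $h=0$ to $S^*_aN$ at $h=a$).

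So the steps, in order, are: (i) set the normalization so that $S^*_rN$ and $0_{T^*N}$ are level sets of the radial Hamiltonian and record that a complete autonomous Hamiltonian $\Delta$-separating the pair of level sets rescales to this radial Hamiltonian on the relevant region; (ii) apply the general barcode-to-interlinking estimate proven earlier in the paper to the quadruple $(T^*_xN,T^*_yN)$ and the two level sets; (iii) compute $\operatorname{HW}(T^*_xN,T^*_yN)$ via the Abbondandolo--Schwarz/Abouzaid isomorphism with $H_*(\mathcal{P}_{x,y}(N))$ and its length filtration; (iv) observe that the earliest nonzero filtered homology appears at filtration level exactly $d = \operatorname{dist}(x,y)$, hence a persistent bar starts at $d$; (v) conclude the two interlinking constants, and note the ``vice versa'' direction follows from the symmetry of the Poisson bracket invariant under swapping the roles of the two pairs (which should be part of the general machinery already established). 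The ``and vice versa'' clause just uses that the Poisson bracket invariant $\operatorname{pb}^+$ of a quadruple controls interlinking in both directions, which I would cite from Section~\ref{sec:poissonBracketInvariants}.

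The main obstacle I anticipate is step (iii)--(iv): making the identification between the \emph{action} filtration on wrapped Floer homology of the two fibers and the \emph{length} filtration on $\mathcal{P}_{x,y}(N)$ precise enough that the left endpoint of the bar is \emph{exactly} $d$ and not merely bounded by it — this requires care about the choice of cylindrical coordinates, the shift in the action functional coming from the two fibers not being cylindrical at infinity in the naive sense, and the grading conventions. A secondary subtlety is checking that a general complete autonomous Hamiltonian $\Delta$-separating $S^*_aN$ from $S^*_bN$ (not just the linear radial one) still forces the chord via a squeezing/monotonicity argument — but this is exactly what the Poisson bracket invariant machinery of \cite{entov2017lagrangian} is designed to handle, so it should reduce to the barcode estimate without further work.
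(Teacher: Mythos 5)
Your proposal follows essentially the same route as the paper: identify $\operatorname{HW}$ of the two cotangent fibers with the length/energy-filtered homology of the path space $\mathcal{P}_{x\to y}$ via a filtered Abbondandolo--Schwarz-type isomorphism, extract a semi-infinite bar with left endpoint $d$ from the minimizing geodesic, and feed this into the general barcode-to-$\operatorname{pb}^+$ bound of Theorem \ref{thm:mainTheorem} together with Theorem \ref{thm:pb4AndInterlinking} and the anti-symmetry of $\operatorname{pb}^+$ for the ``vice versa'' clause. The only step you omit is the reduction, via semi-continuity of $\operatorname{pb}^+$, to the case where $x$ and $y$ are non-conjugate (so the energy functional is Morse and the persistence/barcode machinery applies); the paper handles this by approximating $y$ with non-conjugate points $y_n$ and passing to the limit.
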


\begin{remark}
    This work is inspired by \cite{entov2022legendrian}, where interlinking of certain pairs involving Lagrangian submanifolds are studied using methods of persistence in Legendrian contact homology. Due to fundamental issues in Legendrian contact homology, certain components of the theory required for their proof have not yet been proven in full generality, but rather, only for certain type of contact manifolds. Therefore the sets of examples covered by \cite{entov2022legendrian} and by this work are disjoint. In particular, cotangent bundles of \emph{compact} manifolds do not fall under the scope of \cite{entov2022legendrian}.
\end{remark}
\begin{remark}
    Similar, yet different, interlinking results were previously obtained in the case of the contangent bundle of a torus, $T^*T^n$, also via Poisson bracket invariants but with different methods (symplectic quasi-states).
    Both results are in the more general context of non-autonomous interlinking:
    
    Theorem 1.13 in \cite{buhovsky2012poisson} proves interlinking for a pair of cotangent fibers, and the pair comprising the zero section and the boundary of some neighborhood of the zero section, but without specifying the direction in which the Hamiltonian chord goes. 

    In his dissertation, \cite{rauchThesis}, Rauch proves a certain dichotomy between interlinking and a type of escaping of trajectories to infinity, for a pair of cotangent fibers, and the pair comprising the zero section and the boundary of a certain type of neighborhood of the zero section.
\end{remark}

Theorem \ref{thm:cotangentInterlinking} follows from a more general theorem on Lagrangians in Liouville domains.
Recall the definition of a \emph{Liouville domain}, $M=(Y,\omega,\lambda)$, where $Y$ is a manifold with boundary $\Sigma := \partial Y$, the $2$-form $\omega=d\lambda$ is an exact symplectic form and $\alpha^{}_M := \lambda\vert^{}_{\partial Y}$ is a contact form on $\partial Y$. Denote by $\xi^{}_\Sigma = \ker \alpha^{}_M$ the contact distribution. The form $\lambda$ is called the \emph{Liouville form}.
 
The \emph{Liouville vector field} $v$ is defined by $\mathcal{L}_v \omega = \omega$, or equivalently by $\omega(v,\cdot) = \lambda (\cdot)$.
Using the flow of the Liouville vector field, one identifies a collar neighborhood of the boundary $\partial Y$ with the symplectic manifold 
\[\left( \left(\varepsilon,1\right] \times \Sigma, \; d\left(r\alpha^{}_M\right)\right),\]
where $r$ is the coordinate on the interval. In fact, there exists a collar neighborhood of the form $\left( \left(0,1\right] \times \Sigma, \; d\left(r\alpha^{}_M\right)\right)$. Its complement in $M$ is called the \emph{core} of $M$, which we denote by $\operatorname{core}\left(M\right)$

One defines the \emph{completion} of $M$ to be the following Liouville manifold (without boundary) 
\[ 
    \widehat{M} = \left( Y\sqcup^{}_{\left\{1\right\}\times \Sigma} \left(\left[1,\infty \right) \times \Sigma\right),\; \omega^{}_{\widehat {M}},\; \lambda^{}_{\widehat {M}}\right),
\]
where the gluing is done along the aforementioned collar neighborhood, the symplectic form is given by $\omega^{}_{\widehat {M}}= d\lambda^{}_{\widehat {M}}$ and the form $\lambda^{}_{\widehat {M}}$ is defined by
\[
    \begin{aligned} 
        &\left.\lambda^{}_{\widehat {M}}\right\vert^{}_{M} := \lambda \\
        &\left.\lambda^{}_{\widehat {M}}\right\vert^{}_{\left[1,\infty \right)\times \Sigma} := d\left(r\alpha^{}_M\right)
    \end{aligned}
\]
\begin{remark}
    To simplify the notation we would often write $M$ and $\widehat{M}$ instead of $Y$ and $\widehat{Y}$, and in similar spirit, by $\partial M$ we mean $\partial Y=\Sigma$.
\end{remark}

We consider Lagrangians with boundary $L\subset M,\enskip\partial L\subset \partial M,$ which are \emph{exact}, namely, $\left.\lambda\right\vert^{}_{L} = df$ for some $f\colon L \to \mathbb{R}$. We say that an exact Lagrangian  $L\subset M$ is \emph{asymptotically conical} if the following two conditions hold:
\begin{enumerate}
    \item The boundary $\Lambda := \partial L$ is a Legendrian submanifold of $\left(\Sigma, \xi^{}_\Sigma \right)$
    \item There exists $\varepsilon > 0$ such that $L$ restricted to the collar neighborhood $\left[1-\varepsilon, 1\right]\times \Sigma$ is of the form $\left[1-\varepsilon, 1\right] \times \Lambda$.
\end{enumerate}
Given an asymptotically conical Lagrangian $L \subset M$, we can complete it to $\widehat {L} \subset \widehat {M}$, a Lagrangian without boundary in the completion of $M$, by attaching a conical end, namely,
\[ 
    \widehat{L} = L\sqcup^{}_{\left\{1\right\}\times \Lambda} \left(\left[1,\infty \right) \times \Lambda\right).
\]

Now, consider the following setup: $M$ is a Liouville domain and $L,L^\prime \subset M$ are two exact Lagrangians, which are asymptotically conical. Choose $f,g$ such that $\left.\lambda\right\vert^{}_{L} = df$, $\left.\lambda\right\vert^{}_{L^\prime} = dg$ and such that $f$ and $g$ vanish in a neighborhood of $\partial M$.

To such pairs of Lagrangians one can associate the \emph{action filtered wrapped Floer homology}, a family of vector spaces $HW^a\left(M,L\to L^\prime\right)$, parametrized by $a\in \mathbb{R}$, depending only of $M, L, L^\prime, f$ and $g$. See Section \ref{sec:wrappedFloerHomology} for the definition of the action functional and of wrapped Floer homology. 
The wrapped Floer homology of action sublevels is a \emph{persistence module}, (see Section \ref{sec:perModules} on persistence modules.) Namely, for every $a\le b$ there exists a morphism 
\[
    \iota_{a\to b} \colon HW^a\left(M,L\to L^\prime\right) \to HW^b\left(M,L\to L^\prime\right),\]
satisfying 
\begin{itemize}
    \item $\iota_{a\to a} = \operatorname{Id}$,
    \item $\iota_{b\to c} \circ \iota_{a\to b}  = \iota_{a\to c}$.
\end{itemize}
Persistence modules admit a normal form which is classified by collections of intervals, called \emph{barcodes}, see Theorem \ref{thm:perModNormalForm}, cited in Section \ref{sec:perModules}. In our setting the barcode is a multiset of intervals either of the form $(a,b]$, for $a<b$, or $(a,\infty)$.
When thought of as a persistence module, we denote the wrapped Floer homology by $\wfhBold{M}{L}{L^\prime}$, with $\wfhPlain{M}{L}{L^\prime}_a = HW^a\left(M,L\to L^\prime\right)$.

Our main theorem is formulated in terms of the Poisson bracket invariant of quadruples, introduced in \cite{buhovsky2012poisson,entov2017lagrangian}, which we briefly recall in what follows, and elaborate on in section \ref{sec:poissonBracketInvariants}.

Four compact subsets $X_0,X_1,Y_0,Y_1$ of $M$ will be called an \emph{admissible quadruple} if 
\[
    X_0 \cap X_1 = Y_0 \cap Y_1 = \emptyset.
\]
Given an admissible quadruple define the following set of pairs of compactly supported smooth functions:
\[\mathcal{F}_M\left(X_0,X_1,Y_0,Y_1\right)= \left\lbrace \left(F,G\right) \in C^\infty_c\left(M\right) \times C^\infty_c\left(M\right)  \,\middle\vert\,
\begin{aligned}
    F\vert^{}_{X_0} \le 0, && F\vert^{}_{X_1} \ge 1 \\
    G\vert^{}_{Y_0} \le 0, && G\vert^{}_{Y_1} \ge 1 \\
\end{aligned}
\right\rbrace.
\]

The Poisson bracket invariant of an admissible quadruple is defined as:
\begin{definition}\label{def:introPoissonBracketDef}
\[
    \operatorname{pb}^+_M\left(X_0,X_1,Y_0,Y_1\right) := \inf_{\mathcal{F}_M\left(X_0,X_1,Y_0,Y_1\right)} \max_M \left\lbrace F,G \right\rbrace.
\] 
\end{definition}

Given a Liouville domain and $a>0$, we define the following subset of its completion $\widehat{M}$: 
\[
    M_a:= \operatorname{core}\left(M\right) \sqcup \left(0,a\right] \times \Sigma.
\]
Note that $M_1 = M$, and if $a<1$ then $M_a \subset M$. Equipped with the restricted Liouville form, $M_a$ is also a Liouville domain by itself.
We denote by $M_{[a,b]}$ the subset of $\widehat M$ given by $\left[a,b\right] \times \Sigma$.

Our main theorem now reads: 
Let $L,L^\prime \subset M$ be two exact Lagrangians in a Liouville domain $M$, which are asymptotically conical, and fix a choice of $f,g$ such that $\left.\lambda\right\vert^{}_{L} = df$, $\left.\lambda\right\vert^{}_{L^\prime} = dg$ and such that $f$ and $g$ vanish in a neighborhood $\partial M$.
\begin{mainthm}\label{thm:mainTheorem}
    Assume that the barcode of $\wfhBold{M}{L}{L^\prime}$ contains a bar of the form $\left (\mu, C\mu\right]$ with $\mu > 0$, $C>1$. (For a bar of the form $\left(\mu, \infty\right)$ we set $C=\infty$) Then:
    \begin{enumerate}
        \item\label{itm:1stInMainTheorem} For every $0<a<b$ with $\frac {b}{a} \le C$, if $\widehat{L}, \widehat{L^\prime}$ are conical in $M_{[a,b]}$ then
                \[
                \operatorname{pb}_{\widehat {M}}^+\left(\widehat{L}\cap  M_{[a,b]}, \widehat{L^\prime} \cap  M_{[a,b]},  \partial M_a, \partial M_b \right) \ge \frac {1}{\mu\left(b-a\right)}.
                \]
            \item If moreover, $C=\infty$ and $\widehat{L}, \widehat{L^\prime}$ are conical in $\widehat{M}\setminus \operatorname{core}\left(M\right)$ then:
                \[
                \operatorname{pb}_{\widehat {M}}^+\left(\widehat{L}\cap  M_{b}, \,\widehat{L^\prime}\cap  M_{b}, \,\operatorname{core}\left(M\right), \,\partial M_b \right) \ge \frac {1}{\mu b}.
                \]
    \end{enumerate}
\end{mainthm}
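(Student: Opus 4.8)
The plan is to fix an arbitrary competitor $(F,G)$ in the infimum defining $\operatorname{pb}^+_{\widehat M}$, use the bar $(\mu,C\mu]$ to produce a Hamiltonian chord between $\widehat L$ and $\widehat{L^\prime}$ inside $M_{[a,b]}$ of small time-length, and observe that the separation conditions on $F$ force any chord from $\widehat L\cap M_{[a,b]}$ to $\widehat{L^\prime}\cap M_{[a,b]}$ to have time-length at least $1$; the two bounds then give the claim. Set $c:=\max_{\widehat M}\{F,G\}$, which we may assume lies in $(0,\infty)$. Composing $F,G$ with nondecreasing functions of one variable with derivative in $[0,1]$ and multiplying by suitable cutoffs --- operations which keep $(F,G)$ admissible and do not increase $\max\{F,G\}$, since $\{\chi\circ F,\psi\circ G\}=(\chi^\prime\circ F)(\psi^\prime\circ G)\{F,G\}$ --- we may assume $0\le F,G\le1$, that $F\equiv0$ on $\widehat L\cap M_{[a,b]}$ and $F\equiv1$ on $\widehat{L^\prime}\cap M_{[a,b]}$, that $G\equiv0$ near $\partial M_a$ and $G\equiv1$ near $\partial M_b$, and that $F,G$ are supported in a small neighbourhood of $M_{[a,b]}$. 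A Liouville rescaling of $\widehat M$, under which $\operatorname{pb}^+$, the bar $(\mu,C\mu]$ and the conicality hypotheses all behave covariantly, reduces the statement to the case $a=1$, so $M=M_1$ and $1<b\le C$. Put $H:=\pm c^{-1}G$, an autonomous compactly supported Hamiltonian $c^{-1}$-separating $\partial M_1$ and $\partial M_b$, with the sign chosen so that $F$ is nondecreasing along orbits of $\phi^t_H$ running from $\widehat L$ towards $\widehat{L^\prime}$. If $\phi^t_H$ admits a chord $\gamma\colon[0,\tau]\to\widehat M$ with $\gamma(0)\in\widehat L\cap M_{[1,b]}$ and $\gamma(\tau)\in\widehat{L^\prime}\cap M_{[1,b]}$, then, using $\{F,G\}\le c$,
\[
1\ \le\ F(\gamma(\tau))-F(\gamma(0))\ =\ \int_0^\tau\{F,H\}(\gamma(s))\,ds\ =\ \frac1c\int_0^\tau\{F,G\}(\gamma(s))\,ds\ \le\ \tau .
\]
Thus it suffices to produce such a chord with $\tau\le\mu(b-1)c$: then $1\le\mu(b-1)c$, and taking the infimum over $(F,G)$ and undoing the rescaling gives part (1). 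Part (2) is the limit $a\to0^+$, with $\partial M_a$ replaced by $\operatorname{core}(M)=\bigcap_{a>0}M_a$ and $b-1$ by $b$; the infinite bar imposes no upper restriction, and the conicality hypothesis on $\widehat M\setminus\operatorname{core}(M)$ makes the construction uniform in $a$.

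\medskip
Producing the short chord is the Floer-theoretic heart. Geometrically, $\phi^t_H$ wraps the collar $M_{[1,b]}$ in the Reeb direction: since $G$ runs from $0$ on $\partial M_1$ to $1$ on $\partial M_b$, a mean-value argument yields a radial level in $[1,b]$ along which $\phi^t_H$ displaces points at Reeb-speed $\ge c^{-1}/(b-1)$, so that by time $\mu(b-1)c$ the displacement exceeds the length of the shortest Reeb chord from $\partial L$ to $\partial L^\prime$ --- which exists, of length $\le\mu$, because a bar starting at $\mu$ encodes exactly such a generator, realised at the boundary $\partial M$. Rigorously, since $\phi^t_H$ is compactly supported, Hamiltonian invariance of wrapped Floer homology identifies $HW^\bullet(\phi^t_H(\widehat L),\widehat{L^\prime})$, as an action-filtered persistence module, with $\wfhBold{M}{L}{L^\prime}$ up to a continuation-morphism shift of the action parameter bounded by the Hofer-type energy $\int_0^t(\max_{\widehat M}H-\min_{\widehat M}H)\,ds=t\,c^{-1}$. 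For $t\le\mu(b-1)c$ this shift is at most $\mu(b-1)$, which --- precisely because $b\le C$ --- does not exhaust the length $(C-1)\mu$ of the bar; hence the bar is not annihilated, $HW^\bullet(\phi^t_H(\widehat L),\widehat{L^\prime})\neq0$, and $\phi^t_H(\widehat L)\cap\widehat{L^\prime}\neq\emptyset$. Since $\mu>0$ the bar's generator is a nontrivial chord (realised near $\partial M$), so it persists as a nontrivial chord of $\phi^t_H$ of time-length $t$; and since $H$ is supported near $M_{[1,b]}$, tracking its action (bounded by $\mu b$, hence placing it at radial levels below $b$) confines this chord, with both endpoints, to $M_{[1,b]}$. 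Taking $t=\mu(b-1)c$ completes the first step.

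\medskip
The step I expect to demand the most care is the Floer bookkeeping just sketched: building the continuation maps for wrapped complexes perturbed by $tH$ in the completion, proving uniform energy/compactness estimates ruling out escape to infinity, verifying that an action shift of size $\le t\,c^{-1}$ cannot destroy a bar of length $(C-1)\mu$ --- this is where the sharp hypothesis $b/a\le C$, rather than a weaker additive condition, enters, via the scaling of the action filtration with the radial coordinate --- and the localisation confining the produced chord to $M_{[a,b]}$, which uses the conicality of $\widehat L,\widehat{L^\prime}$ there and the vanishing of $f,g$ near $\partial M$. Granting Theorem \ref{thm:mainTheorem}, Theorem \ref{thm:cotangentInterlinking} follows: taking $L=T^*_x N$ and $L^\prime=T^*_y N$ gives $\partial M_a=S^*_a N$ and $\partial M_b=S^*_b N$, and $\wfhBold{D^*N}{T^*_x N}{T^*_y N}$ is the length-filtered homology of the space of paths in $N$ from $x$ to $y$, whose barcode contains the infinite bar $(d,\infty)$ with $d=\operatorname{dist}(x,y)$, so $\mu=d$ and $C=\infty$; the resulting lower bound on $\operatorname{pb}^+$ yields the asserted autonomous interlinking via the standard implication from positivity of Poisson bracket invariants, and part (2) of that theorem is the same argument with $L^\prime$ the zero section.
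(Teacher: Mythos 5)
Your outer bookkeeping is fine: fixing an admissible pair $(F,G)$, normalizing to $a=1$ by a Liouville rescaling (this is how the paper reduces Theorem \ref{thm:mainTheorem} to Theorem \ref{thm:specialCaseMainTheorem}, via Lemma \ref{lem:viterboRestriction}), obtaining Item 2 as the limit $a\to 0^+$ by semi-continuity of $\operatorname{pb}^+$, and the estimate $1\le F(\gamma(\tau))-F(\gamma(0))=\int_0^\tau\{F,c^{-1}G\}\,ds\le\tau$ forcing any relevant chord of $\phi^t_{c^{-1}G}$ to have time-length at least $1$. But the Floer-theoretic heart of your argument --- producing a chord of $\phi^t_H$ from $\widehat L\cap M_{[1,b]}$ to $\widehat{L^\prime}\cap M_{[1,b]}$ of time-length $t\le\mu(b-1)c$ --- has a genuine gap. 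The inference ``the bar is not annihilated, hence $HW^\bullet(\phi^t_H(\widehat L),\widehat{L^\prime})\neq0$, and therefore $\phi^t_H(\widehat L)\cap\widehat{L^\prime}\neq\emptyset$'' is false for wrapped Floer homology: its generators are chords of the admissible (linear-at-infinity) wrapping Hamiltonians, i.e.\ essentially Reeb chords from $\partial L$ to $\partial L^\prime$, not intersection points. Indeed the standing hypothesis of the theorem is that $\wfhBoldSmallPar{M}{L}{L^\prime}\neq 0$ while $L\cap L^\prime=\emptyset$ (e.g.\ two disjoint cotangent fibers). Worse, since $X_H$ is compactly supported and both Lagrangians are conical outside a compact set, $\boldsymbol{\operatorname{HW}}$ of the moved pair is interleaving-isomorphic to the original one for \emph{every} $t$; nothing in your sketch couples the size of $t$, or the separation of $G$ between $\partial M_1$ and $\partial M_b$, to the Floer data. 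If the argument worked as stated it would produce chords of arbitrarily small time-length, contradicting your own bound $\tau\ge1$; so no lower bound on $c=\max_{\widehat M}\{F,G\}$ can come out of it. There is also no justification for the localisation claim that the produced chord has both endpoints in $M_{[1,b]}$, which your use of $F$ requires.

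The paper's proof avoids producing chords altogether. It deforms the symplectic form by $\omega_\tau=\omega+\tau\,dF\wedge dG$ and shows (Lemma \ref{lem:pb4DeformationBound}) that a lower bound on $\operatorname{pb}^+$ follows from an upper bound on the time $T$ during which $\omega_\tau$ can stay nondegenerate. As long as it does, Moser's theorem converts the deformation into an exact Lagrangian isotopy $L^\prime_\tau$ whose primitive $g_\tau$ equals $\tau$ near $\partial M$ and $0$ near $\partial M_\eps$ (Proposition \ref{prop:lagrangianIsotopyProperties}); the associated continuation map with moving boundary conditions shifts the action filtration by $-T$ (Proposition \ref{prop:movingBdryContPerModMorphism}), and, combined with the Viterbo transfer and the rescaling isomorphism (Lemma \ref{lem:viterboRestriction}), yields the factorization of $\iota_{\mu\to\frac1\eps\mu}$ through $\iota_{\frac1\eps(\mu-T)\to\frac1\eps\mu}$ (Proposition \ref{prop:iotaFactorization}). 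The bar element furnished by Lemma \ref{lem:pmNonZeroElement} then forces $T\le(1-\eps)\mu$, which is exactly the quantitative input your sketch is missing. If you want to pursue your more dynamical route, you would in effect be proving the interlinking statement (Theorem \ref{thm:pb4AndInterlinking}) directly, and you would need a mechanism --- absent from the proposal --- that converts the filtered Floer data plus the separation property of $G$ into an actual finite-time chord of the compactly supported flow.
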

\begin{remark}
    Note that as $\mu$ tends to $0$, both bounds in Items 1 and 2, tend to $+\infty$. Also note that in particular, in out setting, there can be no bar with a left endpoint at zero, as $\operatorname{pb}^+$ is always finite.
\end{remark}
The relation with interlinking comes from the following property of Poisson bracket invariants.
\begin{theorem}[{\cite[Theorem 1.11]{entov2017lagrangian}}]\label{thm:pb4AndInterlinking}
    Suppose $(X_0,X_1,Y_0,Y_1)$ is an admissible quadruple with $\operatorname{pb}_M^+\left(X_0,X_1,Y_0,Y_1\right) = \frac{1}{\kappa} > 0$. Then the pair $(Y_0,Y_1)$ autonomously $\kappa$-interlinks the pair $(X_0,X_1)$.\qed
\end{theorem}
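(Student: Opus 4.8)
The plan is to prove the contrapositive. Fix a complete autonomous Hamiltonian $H$ that $\Delta$-separates $Y_0$ and $Y_1$, put $T:=\kappa/\Delta$, and suppose, aiming for a contradiction, that the flow $\phi^t_H$ admits no chord from $X_0$ to $X_1$ of time-length $\le T$. I will construct a pair $(F,G)\in\mathcal F_M(X_0,X_1,Y_0,Y_1)$ with $\max_M\{F,G\}<1/\kappa$; since $\operatorname{pb}^+_M(X_0,X_1,Y_0,Y_1)$ is the infimum of $\max_M\{F,G\}$ over that class, this contradicts $\operatorname{pb}^+_M(X_0,X_1,Y_0,Y_1)=1/\kappa$, and hence the desired chord must exist. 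I normalize the Poisson bracket so that $\{F,G\}=X_G(F)=dF(X_G)$, equivalently $\tfrac{d}{dt}\bigl(F\circ\phi^t_G\bigr)=\{F,G\}\circ\phi^t_G$ for autonomous $G$; with this sign, the monotonicity of the profile function chosen below is precisely what forces the chord to run from $X_0$ to $X_1$ (rather than $X_1\to X_0$). A preliminary observation: since $X_0,X_1$ are compact and $\phi^t_H$ is complete, the flow-out $\bigcup_{s\in[0,T]}\phi^s_H(X_0)$ is compact and, by hypothesis, disjoint from the compact set $X_1$; by continuity of the flow there is a strictly larger $T_1>T$ with the same property, i.e. $K:=\bigcup_{s\in[0,T_1]}\phi^s_H(X_0)$ is compact and disjoint from $X_1$. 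The gap $T_1>T$ will later absorb the errors incurred by smoothing.

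For $G$ I take a cut-off of $\chi\circ H$, where $\chi\colon\mathbb R\to[0,1]$ is smooth, nondecreasing, identically $0$ on $(-\infty,\sup_{Y_0}H]$, identically $1$ on $[\inf_{Y_1}H,\infty)$, and has $\max\chi'\le(1+\eta)/\Delta$ for a small $\eta>0$ fixed at the end (such $\chi$ exists because $\inf_{Y_1}H-\sup_{Y_0}H=\Delta$). As $\chi\circ H$ need not be compactly supported, I multiply it by a function $\beta\in C^\infty_c(M)$ with $0\le\beta\le1$ that equals $1$ on a neighbourhood of $\operatorname{supp}F\cup Y_1$ (legitimate once $F$ is fixed), and set $G:=\beta\cdot(\chi\circ H)$. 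Then $G\in C^\infty_c(M)$, $G|_{Y_0}\le0$, $G|_{Y_1}\ge1$, and because $dF$ is supported in $\operatorname{supp}F$, where $\beta\equiv1$, one has $\{F,G\}=\{F,\chi\circ H\}=\chi'(H)\,X_H(F)$ identically, with $\chi'\ge0$.

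The heart of the argument is the construction of $F$. Pick $h\in C^\infty_c(M)$ with $0\le h\le1$, $h\equiv1$ near $X_1$ and $h\equiv0$ near $K$ (possible since $K$ and $X_1$ are disjoint compacta), and set \[ F(p):=\max_{s\in[0,T_1]}\Bigl(h(\phi^s_H(p))-\tfrac{s}{T_1}\Bigr). \] Then $F$ is Lipschitz, compactly supported, and $0\le F\le1$; moreover $F\equiv0$ on a neighbourhood of $X_0$ (for $p$ near $X_0$ the arc $\{\phi^s_H(p):0\le s\le T_1\}$ stays in $\{h=0\}$, so only $s=0$ contributes) and $F\equiv1$ on a neighbourhood of $X_1$ (there $h(p)=1$, so $F(p)\ge1$). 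The crucial point is the one-sided bound $X_H(F)\le1/T_1$ almost everywhere: for $r\ge0$, substituting $u=s+r$ gives $F(\phi^r_H(p))=r/T_1+\max_{u\in[r,T_1+r]}\bigl(h(\phi^u_H(p))-u/T_1\bigr)$, and since the bracketed function is $\le0$ for $u\ge T_1$ whereas its maximum over $[0,T_1]$ is $\ge h(p)\ge0$, that maximum is $\le F(p)$, so $F(\phi^r_H(p))\le F(p)+r/T_1$. The hard part is to pass from this Lipschitz $F$ to a \emph{smooth} compactly supported one retaining $F|_{X_0}\le0$, $F|_{X_1}\ge1$ and $X_H(F)\le1/T_2$ for some $T_2\in(T,T_1)$: away from the closed critical set of $H$ one mollifies in flow-box coordinates, in which $X_H=\partial_1$ and mollification commutes with $X_H$, preserving $X_HF\le1/T_1$; near the critical set the estimate $|X_HF|\le C|X_H|$ makes the bound automatic after any mild mollification; and patching the local smoothings by a partition of unity subordinate to this cover changes $X_HF$ by only $O(\varepsilon)$, which is $<1/T_2-1/T_1$ once the mollification scale $\varepsilon$ is small. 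I expect this last smoothing step — keeping a one-sided differential inequality under mollification and absorbing the patching error into the gap $T_1>T$ — to be the only genuinely delicate point.

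Finally, fix $\eta>0$ with $(1+\eta)T<T_2$. With $F$ and the corresponding $G$: where $X_H(F)\ge0$ one has \[ \{F,G\}=\chi'(H)\,X_H(F)\;\le\;\frac{1+\eta}{\Delta}\cdot\frac{1}{T_2}\;<\;\frac{1}{\Delta T}=\frac{1}{\kappa}, \] while where $X_H(F)<0$ one has $\{F,G\}\le0$ since $\chi'\ge0$; hence $\max_M\{F,G\}<1/\kappa$. As $(F,G)\in\mathcal F_M(X_0,X_1,Y_0,Y_1)$, this contradicts $\operatorname{pb}^+_M(X_0,X_1,Y_0,Y_1)=1/\kappa$. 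Therefore the flow of $H$ does admit a chord from $X_0$ to $X_1$ of time-length $\le T=\kappa/\Delta$; since $H$ was an arbitrary complete autonomous $\Delta$-separating Hamiltonian, this is exactly the statement that $(Y_0,Y_1)$ autonomously $\kappa$-interlinks $(X_0,X_1)$.
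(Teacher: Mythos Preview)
The paper does not prove this theorem: it is quoted from \cite{entov2017lagrangian} and marked with a \qed, so there is no in-paper argument to compare against. What you have written is essentially the original Entov--Polterovich proof of that cited result: take $G=\chi\circ H$ with a controlled-slope profile, build $F$ as a sup over flow translates of a bump separating $X_1$ from the forward flow-out of $X_0$, and exploit the one-sided Lipschitz bound $F(\phi^r_H p)\le F(p)+r/T_1$ to bound $\{F,G\}$ strictly below $1/\kappa$.

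Your argument is correct in outline and the key inequality $F(\phi^r_H p)\le F(p)+r/T_1$ is derived cleanly. You are right that the only genuinely delicate step is smoothing $F$ while preserving a one-sided bound on $X_H F$; the flow-box plus partition-of-unity scheme you describe is the standard way, and the buffer $T_1>T_2>T$ is exactly what absorbs the patching error. Two minor remarks: first, the sign convention for $\{F,G\}$ is not stated in the present paper, so your explicit normalization is the right move---just make sure it matches the convention in \cite{entov2017lagrangian}, since a sign flip would reverse the direction of the chord and exchange the roles of $X_0,X_1$. Second, the cutoff $\beta$ for $G$ must be chosen after $F$ is fixed so that $\beta\equiv1$ on $\operatorname{supp}dF$; you say this, but it is worth stressing that the order of construction is $F$ first, then $\beta$, then $G$.
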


\begin{remark}
    Similarly to the notation $M_{[a,b]}$ for the set given by $\left[a,b\right] \times \Sigma$, we denote by $\widehat{L}_{[a,b]}$ the intersection $\widehat{L}\cap M_{[a,b]}$. Then in fact, in Theorem \ref{thm:mainTheorem}, Item \ref{itm:1stInMainTheorem}, we have for all $\delta > 0$:
            \[
                \operatorname{pb}_{M_{[a-\delta,b+\delta]}}^+\left(\widehat{L}_{[a,b]},\widehat{L^\prime}_{[a,b]},  \partial M_a,  \partial M_b \right) \ge \frac {1}{\mu\left(b-a\right)}.
            \]
    Note that here the ambient manifold in $\operatorname{pb}^+$ is not $\widehat{M}$.
    The proof follows from cutoff arguments similar to the proof of Claim \ref{clm:pbConvenientForm}.
    In light of the relation to interlinking (Theorem \ref{thm:pb4AndInterlinking}), it follows that the results on the interlinking continue to hold whenever the configuration of $\left(\widehat{L}_{[a,b]},\widehat{L^\prime}_{[a,b]},  \partial M_a,  \partial M_b \right)$ embeds into a symplectic manifold $W$, via a codimension 0 embedding of $M_{[a-\delta, b+\delta]}$ into $W$. This is due to the monotonicity property of $\operatorname{pb}^+$, \cite[Section 4]{entov2017lagrangian}.
\end{remark}

\begin{remark}
By the anti-symmetry property of $\operatorname{pb}_4^+$, we also have that \\
$\operatorname{pb}_M^+\left(Y_1,Y_0,X_0,X_1\right) = \frac{1}{\kappa}$, and hence by the theorem also the pair $(X_0,X_1)$ autonomously $\kappa$-interlinks the pair $(Y_1,Y_0)$. (Note the reversed order of the sets in the second pair).
\end{remark}

Theorem \ref{thm:cotangentInterlinking} is thus obtained from Theorem \ref{thm:mainTheorem} by a computation of wrapped Floer homology for cotangent fibers.

\subsection{Structure of the Paper}
In Section \ref{sec:poissonBracketInvariants} we review the Poisson bracket invariant, and in Section \ref{sec:perModules} we give a short review of persistence modules, following \cite{entov2022legendrian}. 

Section \ref{sec:wrappedFloerHomology} begins by reviewing wrapped Floer homology for a pair of exact Lagrangians, and then constructs the tools that we need for the proof of our main theorems. In particular, in subsection \ref{ssec:continuationMovingBdry} we construct a continuation map for Lagrangian isotopies, which plays crucial role in our proof. In subsection \ref{ssec:energyAndC0} we provide $C^0$ and energy bound required for establishing the required properties of that continuation map.

Section \ref{sec:mainTheoremProof} is dedicated to the proof of Theorem \ref{thm:mainTheorem}, and Section \ref{sec:ctgtBundles} to the proof of Theorem \ref{thm:cotangentInterlinking} about interlinking of configurations in cotangent bundles. Section \ref{sec:discussion} is a short discussion of further research directions.

\subsection{Acknowledgements}
This work began when the author was a post-doc at the Technion, hosted by Michael Entov. I would like to thank Michael Entov for suggesting that I look into this project and for the useful discussions. I also would like to thank Leonid Polterovich for useful discussion, and thank Itamar Rauch for discussions both on his his dissertation results and on this work. Lastly, I would like to thank the math department at the Technion for three pleasant years of post-doc.

\section{Poisson Bracket Invariant}\label{sec:poissonBracketInvariants}

Recall that four sets $X_0,X_1,Y_0,Y_1$ will be called an \emph{admissible quadruple} if 
\[
    X_0 \cap X_1 = Y_0 \cap Y_1 = \emptyset.
\]
Given an admissible quadruple define the following set of pairs of compactly supported smooth functions:
\[\mathcal{F}_M\left(X_0,X_1,Y_0,Y_1\right)= \left\lbrace \left(F,G\right) \in C^\infty_c\left(M\right) \times C^\infty_c\left(M\right)  \,\middle\vert\,
\begin{aligned}
    F\vert^{}_{X_0} \le 0, && F\vert^{}_{X_1} \ge 1 \\
    G\vert^{}_{Y_0} \le 0, && G\vert^{}_{Y_1} \ge 1 \\
\end{aligned}
\right\rbrace.
\]
Define the Poisson bracket invariant of an admissible quadruple to be:
\begin{equation}\label{eqn:pb4Definition}
    \operatorname{pb}^+_M\left(X_0,X_1,Y_0,Y_1\right) := \inf_{\mathcal{F}_M\left(X_0,X_1,Y_0,Y_1\right)} \max_M \left\lbrace F,G \right\rbrace.
\end{equation}

For a subest $X\subset M$ and a function $f\in C^\infty(M)$ we write $f\vert^{}_{\mathcal{O}p\left(X\right)}\equiv C$ to mean ``there exists some neighborhood of $X$ such that $f$ restricted to that neighborhood is a constant function equal to $C$''.

We now define a subest of $\mathcal{F}_M\left(X_0,X_1,Y_0,Y_1\right)$ of functions that are constant in neighborhoods of the corresponding sets, namely:
\begin{multline*}    
\mathcal{G}_M\left(X_0,X_1,Y_0,Y_1\right) = \\
=\left\lbrace \left(F,G\right) \in C^\infty_c\left(M\right) \times C^\infty_c\left(M\right)  \,\middle\vert\,
\begin{aligned}
    F\vert^{}_{\mathcal{O}p\left(X_0\right)} \equiv 0, && G\vert^{}_{\mathcal{O}p\left(Y_0\right)} \equiv 0 \\
    F\vert^{}_{\mathcal{O}p\left(X_1\right)} \equiv 1, && G\vert^{}_{\mathcal{O}p\left(Y_1\right)} \equiv 1 \\
\end{aligned}
\right\rbrace.
\end{multline*}
Taking the infimum over $\mathcal{G}_M\left(X_0,X_1,Y_0,Y_1\right)$ Formula (\ref{eqn:pb4Definition}) yields an equivalent definition. This is proven in \cite{buhovsky2012poisson} for a similar invariant with the $C^0$ norm instead of $\max$, and the proof is analogous. The set $\mathcal{G}_M\left(X_0,X_1,Y_0,Y_1\right)$ is more convenient for our scheme of proving a lower bound.

We recall the following properties of $\operatorname{pb}^+_M$. (See \cite{entov2017lagrangian}.):
\begin{description}
    \item[Semi-continuity]: Suppuse that a secquence $X^{(j)}_0$, $X^{(j)}_1$, $Y^{(j)}_0$, $Y^{(j)}_1$, $j\in \mathbb{N}$ of ordered collections of sets converges in the Hausdorff distance to a quadruple $X_0$, $X_1$, $Y_0$, $Y_1$. Then:
    \[
        \operatorname{pb}^+_M\left(X_0, X_1, Y_0, Y_1\right) \ge \limsup_{j\to \infty} \operatorname{pb}^+_M\left(X^{(j)}_0, X^{{(j)}}_1, Y^{(j)}_0, Y^{(j)}_1\right).
    \]
    \item[Anti-symmetry]: 
    \[
        \operatorname{pb}^+_M\left(X_0, X_1, Y_0, Y_1\right) = \operatorname{pb}^+_M\left(Y_1, Y_0, X_0, X_1\right).
    \]
    Note the reversal in the order of the sets $Y_1$ and $Y_0$.
\end{description}

\section{Persistence Modules}\label{sec:perModules}
In this section we recall basic facts about persistence modules, following the summary given in \cite{entov2022legendrian}. For a more detailed introduction to persistence modules see e.g. 
\cite{persistence1_edelsbrunner2022computational}, \cite{persistence2_chazal2016structure}, \cite{persistence3_oudot2017persistence} or \cite{persistence4_polterovich2020topological}.
We work over the base field $\mathbb{Z}/2$, since this case is enough for our purposes.

\begin{definition}
    A persistence module is functor from the poset category of $\left(\mathbb R, \le \right)$ to $\operatorname{Vect}^{\operatorname{fin}}_{\mathbb{Z}/2}$, the category of finite dimensional vector spaces over $\mathbb{Z}/2$. Explicitly, a persistence module $\mathbf{V}$ is given by a pair
    \[ \left(V=\left\lbrace V_t\right\rbrace_{t\in\mathbb R}, \pi = \left\lbrace \pi_{s,t}\right\rbrace_{s,t\in\mathbb R,\, s\le t}\right),\]
    where $V_t$ is a finite dimensional $\mathbb{Z}/2$-vector space for all $t\in\mathbb{R}$, and $\pi_{s,t} \colon V_s \to V_t$ are linear maps (called \emph{the persistence maps}) satisfying
    \begin{enumerate}
        \item \emph{(Persistence):} $\pi_{t,t}=\operatorname{Id}$ for all $t\in \mathbb{R}$, and $\pi_{s,t} = \pi_{r,t} \,\circ\, \pi_{s,r}$ for all $s\le r \le t$, $s,r,t \in \mathbb{R}$.
    \end{enumerate}
\end{definition}
\begin{definition}
    A persistence module is called \emph{a persistence module of finite type} if moreover it satisfies:
    \begin{enumerate}\setcounter{enumi}{1} 
        \item \emph{(Discrete spectrum and Semi-continuity):} There exists a (finite or countable) discrete closed set of points 
        \[\operatorname{spec}(\mathbf{V}) = \left\lbrace l_{min}(\mathbf{V}) := t_0 < t_1 < t_2 < \ldots < +\infty \right\rbrace \subset \mathbb{R},\]
        called the spectrum of $\mathbf V$, so that
        \begin{itemize}
            \item for any $r \in \mathbb{R}\setminus \operatorname{spec}(\mathbf{V})$ there exists a neighborhood $U$ of $r$ such that $\pi_{s,t}$ is an isomorphism for all $s,t \in U, \,s \le t$;
            \item for any $r \in \operatorname{spec}(\mathbf{V})$ there exists $\varepsilon > 0$ such that $\pi_{s,t}$ is an isomorphism of vector spaces for all $s,t \in (r-\varepsilon,r]$.
        \end{itemize}
        \item \emph{(Semi-bounded support):} For the smallest point $l_{min}(\mathbf{V}) := t_0$ of $\operatorname{spec}(\mathbf{V})$ one has $l_{min}(\mathbf{V}) > -\infty$, and $V_t = 0$ for all $t \le l_{min}(\mathbf{V})$.
    \end{enumerate}  
\end{definition}
    The notions of a persistence submodule, the direct sum of persistence modules and a morphism/isomorphism between persistence modules are defined in a straightforward manner.

Let $\mathbb{J}$ be an interval of the form $(a,b]$ for $-\infty<a<b<\infty$ or of the form $(a,\infty)$, for $a\in\mathbb{R}$. The \emph{interval persistence module},

\[\mathbf{Q}(\mathbb{J}) =(Q(\mathbb{J}), \pi) =  \left(\left\lbrace Q(\mathbb{J})_t\right\rbrace_{t\in\mathbb{R}}, \left\lbrace\pi_{s,t}\right\rbrace_{s,t\in\mathbb{R},\,s\le t}\right), \]
is defined as follows: $Q(\mathbb{J})_t = \mathbb{Z}/2$ for $t\in\mathbb{J}$ and $Q(\mathbb{J})_t=0$ otherwise. The persistence maps $\pi_{s,t}$ are the identity map for $s,t\in\mathbb{J}$ and the zero map if $s\not\in\mathbb{J}$ or $t\not\in\mathbb{J}$.

The following structure theorem for persistence modules can be found in \cite{persistence5_zomorodian2004computing}, \cite[Thm. 2.7,2.8]{persistence2_chazal2016structure}, \cite{persistence6_crawley2015decomposition}. It has various versions which have appeared prior to the appearance of persistence modules, see the references provided in Section 3 of \cite {entov2022legendrian}.

\begin{theorem}[Structure theorem for persistence modules]\label{thm:perModNormalForm}

    For every persistence module of finite type $\mathbf{V} = (V,\pi)$ over $\mathbb{R}$ there exists a unique (finite or countable) collection of intervals $\mathbb{J}_j \subseteq \mathbb{R}$ where the intervals may not be distinct but each interval appears in the collection only finitely many times – so that $\mathbf{V} = (V,\pi)$ is isomorphic to 
    \[
    \pushQED{\qed} 
    \bigoplus_j \mathbf{Q}(\mathbb{J}_j). \qedhere 
    \popQED\]
\end{theorem}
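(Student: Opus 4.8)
The plan is to reduce this $\mathbb{R}$-indexed statement to a finite-dimensional linear-algebra problem by exploiting the finite-type axioms, to solve that problem by an explicit decomposition into interval representations, and finally to deduce uniqueness from rank invariants.

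First I would discretize. Writing $\operatorname{spec}(\mathbf V) = \{t_0 < t_1 < t_2 < \cdots\}$, the discrete-spectrum axiom shows that $\pi_{s,t}$ is an isomorphism in a neighbourhood of every point of $\mathbb{R}\setminus\operatorname{spec}(\mathbf V)$; covering each gap $(t_{i-1},t_i)$ by such neighbourhoods, and then invoking the left-semicontinuity clause at the spectrum points, one obtains that $\pi_{s,t}$ is an isomorphism for all $s\le t$ in $(t_{i-1},t_i]$. Together with the semi-bounded support axiom ($V_t=0$ for $t\le t_0$), this means $\mathbf V$ is recovered up to canonical isomorphism from the finite-dimensional spaces $W_i:=V_{t_i}$ and the maps $\phi_i:=\pi_{t_i,t_{i+1}}\colon W_i\to W_{i+1}$, i.e. from a representation of the type-$A$ quiver $W_0\xrightarrow{\phi_0}W_1\xrightarrow{\phi_1}W_2\to\cdots$ with all arrows aligned. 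Under this dictionary an interval summand of the quiver supported on positions $\{i+1,\dots,j\}$ (resp. $\{i+1,i+2,\dots\}$) corresponds to the interval module $\mathbf Q\bigl((t_i,t_j]\bigr)$ (resp. $\mathbf Q\bigl((t_i,\infty)\bigr)$), so a decomposition of the quiver representation into interval representations yields the desired decomposition of $\mathbf V$; this also explains why every bar has one of the forms claimed and why its left endpoint lies in $\operatorname{spec}(\mathbf V)$.

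Next I would decompose the quiver representation $W_0\to W_1\to W_2\to\cdots$, which is the type-$A$ case of Gabriel's theorem; over $\mathbb{Z}/2$ it can be done by hand, by induction on the position. At stage $n$ one keeps a decomposition of the truncation $W_0\to\cdots\to W_n$ into interval representations together with the list of intervals still alive at $n$, whose generators form a basis of $W_n$ ordered by birth time. To advance to $n+1$ one analyses $\phi_n$: performing Gaussian elimination on $\phi_n$ in the birth-ordered basis---only adding earlier-born generators to later-born ones, an operation that preserves the property of spanning an interval subrepresentation---produces a new birth-ordered basis of $W_n$ a subset of which spans $\ker\phi_n$; the alive intervals whose generators fall in $\ker\phi_n$ are terminated at $n$, the rest are prolonged, and a choice of complement of $\operatorname{im}\phi_n$ in $W_{n+1}$ accounts for the intervals born at $n+1$. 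Since this procedure never disturbs the part of the decomposition already fixed on a finite truncation, it passes to the limit for a countable spectrum; and because each $V_t$ is finite-dimensional, only finitely many bars pass through any given $t$, so each interval occurs with finite multiplicity.

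Finally, uniqueness is the easy direction: for any decomposition, the multiplicity of a fixed interval $\mathbb J$ is an explicit inclusion--exclusion combination of the ranks of the persistence maps $\pi_{s,t}$ at sample parameters near the endpoints of $\mathbb J$ (the standard birth--death formula), hence is determined by $\mathbf V$ alone, forcing the multiset $\{\mathbb J_j\}$ to be unique. I expect the only genuine obstacle to be the bookkeeping in the inductive step: one must run the elimination compatibly with the birth-time filtration so that the surviving generators keep cutting out honest interval subrepresentations, and check that these choices cohere over an infinite spectrum; the discretization of Step 1 and the rank formula of Step 3 are routine.
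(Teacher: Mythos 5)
The paper does not actually prove Theorem \ref{thm:perModNormalForm}: it is quoted from the literature (Zomorodian--Carlsson, Chazal et al., Crawley-Boevey), so there is no internal argument to compare yours with, and your proposal has to be judged on its own. Two of your three steps are sound. The discretization is correct: the finite-type axioms do give that $\pi_{s,t}$ is an isomorphism for all $s\le t$ in each interval $(t_{i-1},t_i]$ (and on the unbounded gap), and together with $V_t=0$ for $t\le t_0$ this reduces the problem to a representation of the linear quiver indexed by the spectrum, with the stated dictionary between interval summands and bars $(t_i,t_j]$ or $(t_i,\infty)$. The uniqueness step via the birth--death inclusion--exclusion of ranks of the $\pi_{s,t}$ is also correct, since ranks are isomorphism invariants.

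The genuine gap is the passage to the limit when the spectrum is infinite. Your justification --- that the procedure ``never disturbs the part of the decomposition already fixed on a finite truncation'' --- is false at the level of subrepresentations. An interval summand alive at stage $n$ is a subrepresentation, i.e.\ a compatible system of lifts of its generator back to its birth position; when Gaussian elimination at stage $n$ replaces a later-born generator $v$ by $v+u$ with $u$ earlier-born, all of those earlier components must be replaced as well, so the decomposition of the truncation $W_0\to\cdots\to W_n$ that you had previously fixed is modified (only the multiset of bars, i.e.\ the pairing, is stable). A later-born bar that never dies can be modified at infinitely many stages, so the truncated decompositions need not stabilize and there is no canonical ``limit'' decomposition; for a finite spectrum your induction terminates and is fine, but for a countable spectrum this step needs an argument. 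It is repairable: since the coefficient field is $\mathbb{Z}/2$ and each $V_t$ is finite dimensional, every truncation admits only finitely many decompositions realizing the (already determined) finite list of bars, restriction maps decompositions of longer truncations to decompositions of shorter ones, and K\"onig's lemma then yields a coherent system whose union decomposes $\mathbf{V}$; alternatively one can simply invoke Crawley-Boevey's decomposition theorem for pointwise finite-dimensional persistence modules, which needs no finite-type hypothesis. As written, though, the limit step is asserted rather than proved, and the assertion used to justify it is not true.
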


The collection $\left\lbrace\mathbb{J}_j\right\rbrace$ of the intervals is called the \emph{barcode} of $\mathbf{V}$. The intervals $\mathbb{J}_j$ themselves are called the \emph{bars} of $\mathbf{V}$. Note that the same bar may appear in the barcode several (but finitely many) times (this number of times is called the multiplicity of the bar) – in other words, a barcode is a multiset of bars. The barcode of the trivial persistence module is empty.

The main persistence module we are concerned with in this work is wrapped Floer homology. The modules and persistence maps are defined in Section \ref{sec:wrappedFloerHomology}: \[\wfhBold{M}{L}{L^\prime}:=\left(\wfhPlain{M}{L}{L^\prime}, \pi\right),\] with 
\[\wfhPlain{M}{L}{L^\prime}_a = HW^{a}\left(M,L\to L^\prime\right),\ \ \text{and}\ \  \pi_{a,b} = \iota_{a\to b}.\]

The following lemma will be crucial in our proof of Theorem \ref{thm:mainTheorem}.

\begin{lemma}\label{lem:pmNonZeroElement}
    Let $\mathbf{V}$ be a persistence module and assume that the barcode of $\mathbf{V}$ contains a bar of the form $(a,b]$, or of the form $(a,\infty)$. In the latter case, set $b=\infty$ in what follows.
    Then, for every $\delta>0$ there exists an element $x\in V_{a+\delta}$ such that:

    \begin{itemize}
        \item $\pi_{a + \delta,\, s}\,x \neq 0$ for all $s$ such that $a + \delta\le s\le b$.
        \item For all $s$ such that $a+\delta \le s \le b$ and for all $\sigma\le a$: 
        \[ \pi_{a+\delta,\, s}\,x \not\in \operatorname{im}\pi_{\sigma,\, s}.\]  
    \end{itemize}
\end{lemma}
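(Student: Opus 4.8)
The plan is to exploit the structure theorem (Theorem \ref{thm:perModNormalForm}) to reduce to the case of a single interval module summand, and then to identify the desired element $x$ as a generator of that summand at the parameter value $a+\delta$, with the two conditions being essentially tautological once the right summand is isolated.

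\medskip

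\noindent\textbf{Setup via the structure theorem.} Using Theorem \ref{thm:perModNormalForm}, write $\mathbf{V}\cong\bigoplus_j \mathbf{Q}(\mathbb{J}_j)$, and let $j_0$ be an index with $\mathbb{J}_{j_0}=(a,b]$ (or $(a,\infty)$, in which case $b=\infty$), which exists by hypothesis. Fix $\delta>0$; since $a+\delta\in\mathbb{J}_{j_0}$, the summand $\mathbf{Q}(\mathbb{J}_{j_0})$ contributes a distinguished nonzero element of $V_{a+\delta}$, namely the generator of $Q(\mathbb{J}_{j_0})_{a+\delta}=\mathbb{Z}/2$; call it $x$, viewed inside $V_{a+\delta}$ via the isomorphism and the inclusion of the summand. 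First I would record that for any $s$ with $a+\delta\le s\le b$ we have $s\in\mathbb{J}_{j_0}$, so $\pi_{a+\delta,s}$ restricted to this summand is the identity $\mathbb{Z}/2\to\mathbb{Z}/2$; hence $\pi_{a+\delta,s}\,x$ is the generator of the $j_0$-th summand of $V_s$ and in particular is nonzero. This settles the first bullet.

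\medskip

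\noindent\textbf{The image condition.} For the second bullet, fix $s$ with $a+\delta\le s\le b$ and $\sigma\le a$. The map $\pi_{\sigma,s}\colon V_\sigma\to V_s$ respects the direct sum decomposition, so its image is $\bigoplus_j \operatorname{im}\big(\pi_{\sigma,s}|_{\mathbf{Q}(\mathbb{J}_j)}\big)$. For the summand $j_0$: since $\sigma\le a$, we have $\sigma\notin\mathbb{J}_{j_0}=(a,b]$, so $Q(\mathbb{J}_{j_0})_\sigma=0$ and the $j_0$-component of $\operatorname{im}\pi_{\sigma,s}$ is $0$. But $\pi_{a+\delta,s}\,x$ has nonzero $j_0$-component in $V_s$ (it is the generator of that summand), hence it cannot lie in $\operatorname{im}\pi_{\sigma,s}$. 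This is the whole argument; it does not even use finite type beyond what Theorem \ref{thm:perModNormalForm} needs, nor does it use $b$ vs.\ $\infty$ in an essential way.

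\medskip

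\noindent\textbf{Anticipated difficulties.} I do not expect a serious obstacle — the content is entirely bookkeeping with the normal form. The one point that requires a little care is keeping track of the isomorphism $\mathbf{V}\cong\bigoplus_j\mathbf{Q}(\mathbb{J}_j)$ as a natural transformation, so that "the $j_0$-component of $\pi_{\sigma,s}\,y$" is well-defined independently of choices; this is exactly the statement that the persistence maps are block-diagonal in the chosen basis, which is built into what it means to be an isomorphism of persistence modules. A secondary, purely cosmetic issue is the convention when the bar is $(a,\infty)$: one should note at the outset that setting $b=\infty$ makes the condition "$a+\delta\le s\le b$" read "$a+\delta\le s<\infty$", and all the membership statements $s\in\mathbb{J}_{j_0}$ go through verbatim. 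I would state the proof directly in terms of the decomposition without isolating these as separate lemmas.
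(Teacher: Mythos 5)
Your proposal is correct and follows essentially the same route as the paper: decompose $\mathbf{V}$ via the structure theorem, take the generator of the summand $\mathbf{Q}(a,b]$ at parameter $a+\delta$, and use that $\pi_{a+\delta,s}$ is the identity on that summand while $Q(a,b]_\sigma=0$ for $\sigma\le a$. Your write-up is in fact slightly more explicit than the paper's on the second bullet, spelling out that $\operatorname{im}\pi_{\sigma,s}$ is block-diagonal and has zero $j_0$-component, which is a welcome clarification but not a different argument.
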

\begin{proof}
    $\mathbf{V}$ is isomorphic to a direct sum of interval modules, $\bigoplus_j \mathbf{Q}(\mathbb{J}_\mathbf{j})$.

    Since in the direct sum the persistence maps are direct sums of persistence maps of each interval module, it is enough to prove the lemma for the interval module $\mathbf{Q}(a,b]$, for which the lemma holds by its definition. Indeed, pick $x$ to be any non-zero element in $Q(a,b]_{a+\delta}$. Since $\pi_{a + \delta,\, s}=\operatorname{Id}$ for $a + \delta\le s\le b$, and $\pi_{\sigma,\, s} = 0$ for $s\le a$ the desired properties hold.
\end{proof}

\begin{remark}
    Let $\mathbf{V}$ be a persistence module. Let $c \in \mathbb{R}$. Define a new persistence module $\mathbf{V}^{[+c]}$ by adding $c$ to all indices of $V_t$ and $\pi_{s,t}$. In particular, $V^{[+c]}_t := V_{t+c}$. The barcode of $\mathbf{V}^{[+c]}$ is the barcode of $\mathbf{V}$ shifted by $c$ to the left if $c>0$ and shifted by $\vert c \vert$ to the right, if $c<0$.
\end{remark}

\section{Wrapped Floer Homology}\label{sec:wrappedFloerHomology}
\subsection{Introduction and Definitions}
We review the construction and properties of wrapped Floer homology of a pair of exact Lagrangians, following \cite{alves2019dynamically}. We restrict the discussion to the case where the Lagrangians are disjoint, which simplifies some aspects of the presentation. This generality is sufficient for our use case.

For two asymptotically conical exact Lagrangians $L$ and $L^\prime$ in $M$ denote by
\[
\mathcal{P}_{L\to L^\prime} = \left\lbrace \gamma \colon [0,1] \to \widehat{M} \,\middle\vert\, \gamma(0)\in \widehat{L}, \,\gamma(1)\in \widehat{L^\prime} \right\rbrace,
\]
the space of smooth paths from $\widehat{L}$ to $\widehat{L^\prime}$.

Denote by $X_{\alpha^{}_M}$ the \emph{Reeb vector field} on the boundary $\left(
\Sigma=\partial M, \xi^{}_M = \ker \alpha^{}_M \right)$, and denote the Reeb flow by $\phi^t_{X_{\alpha_M}}$. Recall that $X_{\alpha^{}_M}$ is defined by $X_{\alpha^{}_M} \in \ker d\alpha$ and $\alpha\left(X_{\alpha^{}_M}\right)=1$ . A \emph{Reeb chord of length $T$} from $\Lambda = \partial L$ to $\Lambda^\prime = \partial L^\prime$ is a map $\gamma \colon [0,T] \to \Sigma$, such that 
\begin{align*}
    \dot \gamma (t) &= X_{\alpha^{}_M} \left(\gamma(t))\right), \\
    \gamma(0) &\in \Lambda, \\
    \gamma(1) &\in \Lambda^\prime.
\end{align*}
A Reeb chord of length $T$ from $\Lambda$ to $\Lambda^\prime$ is called \emph{transverse} if the following two subspaces of $T_{\gamma\left(T\right)} \Sigma$, intersect at a single point:  $T^{}_{\gamma\left(T\right)} \left(\phi^T_{X_{\alpha^{}_M}}\Lambda\right)$ and $T^{}_{\gamma\left(T\right)} \Lambda^\prime$.

The \emph{spectrum} of the triple $(\alpha^{}_M, \Lambda \to \Lambda^\prime)$, is the set of all lengths of Reeb chords from $\Lambda$ to $\Lambda^\prime$ and is denoted by $\mathcal{S}\left(\Sigma, \Lambda \to \Lambda^\prime\right)$. It is known to be a nowhere dense set in $[0,\infty)$.

Given a pair of Legendrian submanifolds $\Lambda$ and $\Lambda^\prime$ in $\left(\Sigma, \xi\right)$ we say that a contact form $\alpha^{}_M$ is \emph{regular}, if all Reeb chords from $\Lambda$ to $\Lambda^\prime$ are transverse. Regularity implies that the spectrum  $\mathcal{S}\left(\Sigma, \Lambda \to \Lambda^\prime\right)$ is discrete.
We say that $\left(M, L \to L^\prime\right)$ is \emph{regular} if $\left(\lambda\vert^{}_\Sigma, \Lambda \to \Lambda^\prime\right)$ is regular and $L$ and $L^\prime$ intersect transversely. Note that we restrict ourselves to the case where $L\cap L^\prime = \emptyset$, so the latter automatically holds.

It is a known fact that for any contact form $\alpha$ on $\left(\Sigma, \xi\right)$, there exists arbitrarily small $f\colon \Sigma \to \mathbb{R}$ such that $(1+f)\alpha$ is regular. See \cite[Lemma 1.4]{bourgeois2002morse}.

From here on we assume that $\left(M, L \to L^\prime\right)$ is regular. \\ A Hamiltonian $H_t\colon \widehat{M}\times [0,1] \to \mathbb{R}$ is called \emph{admissible} if 
\begin{enumerate}
    \item $H_t<0$ on $M\subset\widehat{M}$, 
    \item and there exist constants $\mu>0$ and $b<-\mu$ such that $H(r,x)= h(r)=\mu r + b$ on $[1,\infty) \times \partial M$, where $x$ denotes the coordinate on $\partial M$.
\end{enumerate}
If $H$ is an admissible Hamiltonian satisfying the second condition with a given $\mu$, we say that $H$ is \emph{admissible with slope $\mu$ at infinity}.

Fixing functions $f$ and $g$ on $\widehat{L}$ and $\widehat{L}^\prime$ respectively, such that $df = \lambda\vert^{}_{\widehat{L}}$ and $dg = \lambda\vert^{}_{\widehat{L}^\prime}$, We define the action functional $\mathcal{A}_H^{L\to L^\prime} = \mathcal{A}_H \colon \mathcal{P}_{L\to L^\prime} \to \mathbb{R}$ by
\[
\mathcal{A}_H \left(\gamma\right) = f\left(\gamma\left(0\right)\right) - g\left(\gamma\left(1\right)\right) + \intop_0^1 \gamma^*\lambda - \intop_0^1 H\left(\gamma\left(t\right)\right)dt.
\]
The critical points of the action functional are Hamiltonian chords of time length $1$ going from $\widehat{L}$ to $\widehat{L^\prime}$. We denote the set of all these chords by $\mathcal{T}_{L\to L^\prime}(H)$.

A Hamiltonian $H$ is called \emph{non-degenerate} if all elements of $\mathcal{T}_{L\to L^\prime}(H)$ are non-degenerate, namely if $\phi_H^1(\widehat{L})$ is trasnverse to $\widehat{L^\prime}$. 

Note that if $\left(M,L\to L^\prime\right)$ is regular, then a Hamiltonian $H$ with slope $\mu\not\in \mathcal{S}\left(\partial M, \partial L\to \partial L^\prime\right)$ is non-degenerate. Such slopes are called \emph{non-characteristic}. Moreover, since we restrict our discussion to disjoint Lagrangians, it is enough to require that $\left(\alpha^{}_M, \Lambda, \Lambda^\prime \right)$ is regular.
We denote by $\mathcal{H}_{\operatorname{reg}}$ the set of all admissible Hamiltonians which are non-degenerate for $L\to L^\prime$. 

An almost complex structure $J$ is said to be \emph{compatible} with $\omega$ if $\omega(J\cdot,\cdot) > 0$, and $\omega(J\cdot,J\cdot) = \omega(\cdot,\cdot)$, i.e $g_J(\cdot,\cdot) := \omega(J\cdot,\cdot)$ is a $J$-invariant Riemannian metric. Note that the convention $\omega(J\cdot,\cdot) > 0$ is reversed with respect to the convention used in other papers of the author. We use this convention here for compatibility with \cite{alves2019dynamically}.
An almost complex structure on $\left((0,\infty)\times \partial M), \lambda = rd\alpha^{}_M\right)$ is called \emph{cylindrical} if it preserves $\xi_M = \ker \alpha^{}_M$, if $J\vert^{}_{\xi_M}$ is independent of $r$ and compatible with $d\left(r\alpha^{}_M\right)$, and if $JX_{\alpha^{}_M} = r\partial_r$. An almost complex sturcture on $\widehat{M}$ is \emph{asymptotically cylindrical} if it is cylindrical on $[r_0,\infty)\times \partial M$ for some $r_0 > 1$.

We interpret the negative gradient flow of the action functional as Floer strips, and define the Floer complex of an admissible Hamiltonian $H$ and an asymptotically cylindrical almost complex structure, via the count of solutions to the Floer equation, namely, we consider the moduli space of Floer solutions of Fredholm index $1$, connecting two time-$1$ Hamiltonian chords $\gamma_-$ and $\gamma_+$ going from $\widehat{L}$ to $\widehat{L^\prime}$. We denote it by $\mathcal{M}^1\left(\gamma_-,\gamma_+, H, J\right)$, and its elements are the index $1$ solutions satisfying:
\begin{equation}
    \begin{aligned}
        &u\colon \mathbb{R}\times [0,1] \to \widehat{M}, \\
        &\overline{\partial}^{}_{J,H}u := \partial_s u + J_t\left( \partial_t u - X_{H_t}\left(u\right) \right) = 0, \\
        &\lim_{s\to\pm \infty} u(s,t) = \gamma_{\pm}(t), \\
        &u(s,0)\in \widehat{L} \text{ and }  u(s,1)\in \widehat{L^\prime}.
    \end{aligned}  
\end{equation}
We denote by $\widetilde{\mathcal{M}}^1\left(\gamma_-,\gamma_+, H, J\right) := \mathcal{M}^1\left(\gamma_-,\gamma_+, H, J\right) /\, \mathbb{R}$ the quotient by the $\mathbb{R}$-action of translation in the $s$-coordinate.

For a non-degenerate $H_t$ and a generic $J_t$, $t\in [0,1]$, one defines the \emph{wrapped Floer complex} as follows:
\[
CW\left(H,J_t, L\to L^\prime\right) = \bigoplus_{\gamma \in \operatorname{Crit}\left(\mathcal{A}_H\right)} \mathbb{Z}/2 \cdot \gamma,
\]
with a differential $\partial$ that is defined by
\[
    \partial(x) = \sum_{x^\prime \in \operatorname{Crit}\left(\mathcal{A}_H\right)} \#^{}_2\widetilde{\mathcal{M}}^1\left(\gamma_-,\gamma_+, H, J\right) \cdot x^\prime,
\]
where $\#^{}_2$ stands for the count mod $2$. For a generic $J_t$, the moduli spaces are cut transversally, the differential is well defined and satisfies $\partial^2 = 0$. A crucial feature of the differential  is that it does not increase action, this induces a filtration of the complex by subcomplexes given by action sublevels.

The wrapped Floer homology of $H$ is defined as the homology of the complex defined above,
\[
HW\left(H, L\to L^\prime\right) := H\left(CW\left(H, J_t, L\to L^\prime\right)\right).
\]

We suppress the dependence of $H$ on $t$ in the notation. Moreover, using the method of continuation maps, one can show that the homology does not depend on a generic choice of $J_t$, hence we suppress $J_t$ from the notation, here and in what follows.

Given two non-degenerate admissibble Hamiltonians $H_- < H_+$ one defines the \emph{continuation map} $\iota^{H_-,H_+} \colon CW\left(H_-, L\to L^\prime\right) \to CW\left(H_+, L\to L^\prime\right)$  as follows:
Choose a generic increasing homotopy through admissible Hamiltonians, $\left(H_s\right)_{s\in\mathbb{R}}$, $\partial_s H_s \ge 0$ with $H_s = H_\pm$ near $\pm \infty$, respectively. Then $\iota^{H_-,H_+}$ is defined by the mod $2$ count of elements in the  moduli space of Fredholm index $0$ solutions sastisfying
\begin{equation}
    \begin{aligned}
        &u\colon \mathbb{R}\times [0,1] \to \widehat{M}, \\
        &\overline{\partial}_{J,H_s}u := \partial_s u + J_t\left( \partial_t u - X_{H_s}\left(u\right) \right) = 0, \\
        &\lim_{s\to\pm \infty} u(s,t) = \gamma_{\pm}(t), \\
        &u(s,0)\in \widehat{L} \text{ and }  u(s,1)\in \widehat{L^\prime},
    \end{aligned}  
\end{equation}
for a generic choice of $J_t$. The genericity of $H_s$ and $J_t$, ensures the moduli spaces are cut transversally.

The continuation map is a chain map, hence descends to a map between the wrapped Floer homologies:
\[
    \iota^{H_-,H_+} \colon CW\left(H_-, L\to L^\prime\right) \to CW\left(H_+, L\to L^\prime\right).
\]
The continuation maps also do not increase action.

The wrapped Floer homology of $M$ and the Lagrangian $L$ and $L^\prime$, denoted $HW(M,L\to L^\prime)$, is defined as the direct limit of the Floer homologies over all $H\in \mathcal{H}_{\operatorname{reg}}(M,L\to L^\prime)$:
\[
HW(M,L\to L^\prime) := \varinjlim_{H} HW(H,L\to L^\prime). 
\]

\subsection{Action Filtration}
One also defines an action-filtered version of wrapped Floer homology, as follows.
For $a\in [0,\infty)$, let 
\[CW^a\left(H, L\to L^\prime\right) = \bigoplus_{\substack{
        \gamma \in \operatorname{Crit}\left(\mathcal{A}_H\right)\\
        \mathcal{A}_H \left(\gamma \right) < a}} \mathbb{Z}/2 \cdot \gamma,
\]
be the complex generated by only those generators of action strictly less than $a$. Since the differential is action non-increasing, it restricts to a differential on each $CW^a\left(H, L\to L^\prime\right)$, making it a subcomplex. We denote by $HW^a\left(H, L\to L^\prime\right)$ the homology of $CW^a\left(H, L\to L^\prime\right)$.
Since continuation maps do not increase action, they give rise to maps 
\[
    \iota^{H_-,H_+} \colon HW^a\left(H_-, L\to L^\prime\right) \to HW^a\left(H_+, L\to L^\prime\right),
\]
for any pair of Hamiltonians such that $H_- < H_+$.
One then defines
\[
HW^a(M,L\to L^\prime) := \varinjlim_{H} HW^a(H,L\to L^\prime),
\]
with the limit going over all $H\in \mathcal{H}_{\operatorname{reg}}(M,L\to L^\prime)$.

For every $a\le b$ the inclusion of subcomplexes \[CW^a\left(H, L\to L^\prime\right) \subseteq CW^b\left(H, L\to L^\prime\right),\] induces a map on homology, which also commutes with the corresponding continuation maps to induce a map on the direct limit of homologies. These are the persistence morphisms
\[\iota_{a\to b}\colon HW^a(M,L\to L^\prime) \to HW^b(M,L\to L^\prime).\]

One can relate the action filtration on $HW$ to the slope at infinity of the Hamiltonians participating in the limit. The discussion follows \cite[Section 2.2.4]{alves2019dynamically}, adapted to the setting where $f\equiv A$,  $g\equiv B$, constants, near $\partial M$. We denote:   $\gamma:= A-B$.
\begin{proposition}\label{prop:actionAndSlope}
For all $a\ge0$, 
    \begin{equation}\label{eqn:actionAndSlope}
    \varinjlim_{
     \substack{
         H \\
         H\text{'s slope } < a
        }}\!\!\!\!\!\!\!HW(H,L\to L^\prime) \cong
     HW^{a+\gamma}(M,L\to L^\prime).
\end{equation}
\end{proposition}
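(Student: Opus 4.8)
The plan is to follow the standard argument relating the action filtration to the slope at infinity, as in \cite[Section~2.2.4]{alves2019dynamically}: a cofinality reduction combined with an explicit computation of the actions of the generators of $CW(H,L\to L^\prime)$. First I would replace the directed set of all admissible non-degenerate Hamiltonians by a convenient cofinal family $\{H_\tau\}_{\tau>0}$, increasing in $\tau$, where $H_\tau$ has non-characteristic slope $\tau$ at infinity, is $C^2$- and $C^0$-small on $M=M_1$, and on the collar $[1,\infty)\times\Sigma$ has the form $H_\tau(r,x)=h_\tau(r)$ with $h_\tau$ convex, affine of slope $\tau$ for large $r$, and with the family nested, in the sense that $h_\tau$ and $h_{\tau'}$ coincide on every radius interval on which both have slope $\le\min(\tau,\tau')$ and $h_\tau\le h_{\tau'}$ for $\tau\le\tau'$. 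Being cofinal, this family computes all the homologies in play: $HW^{b}(M,L\to L^\prime)=\varinjlim_{\tau}HW^{b}(H_\tau)$ for every $b$, and the left-hand side of \eqref{eqn:actionAndSlope} is $\varinjlim_{\tau<a}HW(H_\tau)$.

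Next I would describe and weigh the generators. The time-$1$ chords of $H_\tau$ from $\widehat L$ to $\widehat{L^\prime}$ come in two kinds. There are finitely many ``interior'' chords contained in $M$; since $H_\tau$ is $C^2$-small there and $L\cap L^\prime=\emptyset$, one may arrange that there are none, or at worst that their actions lie in a fixed set inside $(-\infty,\gamma+\varepsilon)$ with $\varepsilon$ as small as desired. And for each Reeb chord $c$ from $\Lambda$ to $\Lambda^\prime$ of length $\ell(c)<\tau$ there is exactly one chord $z_c$, lying in the collar at the radius $r_c$ with $h_\tau^\prime(r_c)=\ell(c)$ (the Hamiltonian vector field there being $h_\tau^\prime(r_c)$ times the Reeb field). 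Substituting $z_c$ into the action functional, and using that $f\equiv A$, $g\equiv B$ near $\partial M$ (hence $f\equiv A$, $g\equiv B$ on the conical ends, as $\alpha^{}_M$ vanishes on $\Lambda$ and $\Lambda^\prime$), one gets
\[
    \mathcal A_{H_\tau}(z_c)=\gamma+\rho_\tau(r_c),\qquad \rho_\tau(r):=r\,h_\tau^\prime(r)-h_\tau(r).
\]
The correction $\rho_\tau$ is non-decreasing (its derivative is $r\,h_\tau^{\prime\prime}(r)\ge0$ by convexity) and $\rho_\tau(r_c)\to0$ as $\ell(c)\to0$, so I would choose the profiles $h_\tau$ so that $\rho_\tau(r_c)<a$ precisely when $\ell(c)<a$ and so that the affine tail contributes no extra chord of action below $\gamma+a$.

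Then I would run the cofinality argument. For $\tau<a$, every generator of $CW(H_\tau)$ has action $<a+\gamma$, so $CW^{a+\gamma}(H_\tau)=CW(H_\tau)$ and $HW^{a+\gamma}(H_\tau)=HW(H_\tau)$ compatibly with the continuation maps, whence $\varinjlim_{\tau<a}HW^{a+\gamma}(H_\tau)=\varinjlim_{\tau<a}HW(H_\tau)$ is the left-hand side. For $\tau\ge a$, $CW^{a+\gamma}(H_\tau)$ is spanned by the interior chords together with the $z_c$ with $\ell(c)<a$, a $\tau$-independent generating set; by the (integrated) maximum principle the Floer strips, and the continuation strips for the monotone homotopies within the family, between such low-action generators stay in a fixed compact region on which all $H_\tau$ with $\tau\ge a$ coincide, so the truncated subcomplex, its differential, and the transition maps are independent of $\tau$ for $\tau\ge a$. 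Hence $HW^{a+\gamma}(M,L\to L^\prime)=\varinjlim_{\tau}HW^{a+\gamma}(H_\tau)$ is already reached for any single $\tau_0\ge a$; and since as $\tau\to a^-$ the complexes $CW^{a+\gamma}(H_\tau)=CW(H_\tau)$ fill up to exactly the generators present for $\tau_0$, this common value equals $\varinjlim_{\tau<a}HW(H_\tau)$, giving \eqref{eqn:actionAndSlope}, with the isomorphism realised by continuation maps and hence natural.

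I expect the main obstacle to be the bookkeeping in the last two steps: one must choose the convex profiles $h_\tau$ carefully enough that the correction term $\rho_\tau(r_c)$ crosses the threshold $a$ at the same moment $\ell(c)$ does, with no stray low-action generator produced by the affine tail or the convex ``knee'' of the profile, and one must invoke the maximum principle to be sure that the relevant part of the Floer complex has genuinely stabilised and that every identification above intertwines the continuation (equivalently, persistence) morphisms on the two sides. The underlying analytic inputs — transversality, the maximum principle, and invariance of $HW$ under the choice of auxiliary data — are standard and are imported from \cite{alves2019dynamically}.
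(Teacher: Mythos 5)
Your proposal is correct and follows essentially the same route as the paper: a cofinal family of radial admissible Hamiltonians, the tangent-line ($y$-intercept) computation of chord actions giving $\mathcal{A}=\gamma+rh'(r)-h(r)$, the observation that for slope below $a$ the full and $(a+\gamma)$-truncated complexes agree while for larger slopes the new chords (appearing at larger radius) have action above $a+\gamma$, and the no-escape/maximum principle to identify the truncated complexes and continuation maps in the limit. The extra bookkeeping you flag (sharp knee, small constant $-k$, and the spectral gap below $a$ coming from discreteness of the chord spectrum) is exactly what the paper's proof also relies on.
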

\begin{proof}
    Fix $a \in \mathbb{R}$. By the discreteness of the spectrum, there exists $\mu < a$, such that there is no Reeb chord of $\alpha^{}_{M}$ going from $\Lambda$ to $\Lambda^\prime$ with length in the interval $[\mu, a)$. We now choose an admissible Hamiltonian $H^\mu$ with slope $\mu$ at infinity such that:
    \begin{itemize}
        \item $H^\mu$ is a negative constant $-k$ in $M_{1-\eps}$ for some $\eps>0$,
        \item and in $M_{[1-\eps,1]}$, the Hamiltonian $H^\mu$ is a radial convex function, increasing sharply near $\partial M$.
    \end{itemize}  

    If $-k$ is small enough, and $H^\mu$ increases sharply enough close to $\partial M$ then we have that the action of all chords in $\mathcal{T}_{L\to L^\prime}(H)$ is less than $<a+\gamma$.

    This follows from the formula for the action, interperting the term $ \intop_0^1 \gamma^*\lambda - \intop_0^1 H\left(\gamma\left(t\right)\right)dt$ as the $y$-intercept of the tangent to the radial Hamiltonian, see \cite[Section 4.3]{ritter2013topological}. The term involving the difference of the values of $f$ and $g$ equals $\gamma$. 

    If $(M,L\to L^\prime)$ is regular, then $H^\mu$ is regular for all $\mu$, and in that case, increasing $\mu$ to a $\mu^\prime > a$, may only add Hamiltonian chords with action strictly larger than $a+\gamma$. These chords appear at a larger radial coordinate than these with action less than $a_\gamma$.

    The non-regular case is resolved by a small $C^\infty$ perturbation of $H^\mu$, after which, the above conclusion still holds.

    Thus, we get
    \begin{multline*}
        HW^{a+\gamma}(M,L\to L^\prime) \cong \varinjlim_{
     \substack{
         H
        }}HW^{a+\gamma}(H,L\to L^\prime) = \\
    =\varinjlim_{
     \substack{
         H \\
         H\text{'s slope } < a
        }}\!\!\!\!\!\!\!HW(H,L\to L^\prime).
    \end{multline*}

    The last equality is due to the no-escape lemma \cite[Section 19.5]{ritter2013topological} together with the location of the orbits of higher action guaranteeing that the action truncated complexes coincide with these of $H^\mu$ for $\mu$ close enough to $a$. See also \cite[Lemma 1.6]{viterbo2018functors}.
\end{proof}

Thus one may alternatively think of the filtration on $HW(M,L\to L^\prime)$ as given by a filtration on the slopes of the Hamiltonians taken in the limit, rather than the filtration by subcomplexes of action sublevels. This viewpoint will prove useful when we define continuation maps with respect to isotopies of exact Lagrangians.

Moreover, one can show by careful choice of cofinal sequences of Hamiltonians, together with the no-escape lemma \cite[Section 19.5]{ritter2013topological}\footnote{The idea is to select the Hamiltonians in such a way that they coincide on the part of slope $\le a$, and thus the continuation maps sends each generator to a generator represented by the same underlying orbits.}, that the isomorphism above interchanges between the persistence morphisms and the maps induced by continuations, namely that the following diagram commutes,
\begin{equation*}
    \begin{tikzcd}
        [execute at end picture={
        \draw[->] (\tikzcdmatrixname-1-3.south) ++(0,0.6) -- node[right] {$\scriptstyle\iota^{a,b}$} (\tikzcdmatrixname-2-3);
        }]
        {HW^a(M, L \to L^\prime)} \arrow[rr, "\sim"] \arrow[d, "\iota_{a \to b}"] &  & {\varinjlim\limits_{  \substack{      H \\      H\text{'s slope } < a     }}\!\!\!\!\!\!\!HW(H,L\to L^\prime)}  \\
        {HW^b(M, L \to L^\prime)} \arrow[rr, "\sim"]                              &  & {\varinjlim\limits_{  \substack{      H \\      H\text{'s slope } < b     }}\!\!\!\!\!\!\!HW(H,L\to L^\prime)}                           
    \end{tikzcd},
\end{equation*}
where the vertical arrow on the right, $\iota^{a,b}$, stands for the morphism induced by continuation morphisms given by two cofinal sequences of Hamiltonians, chosen so that the continuation is well defined.

Wrapped Floer homomolgy, is, therefore, a persistence module of finite type. That is because for every Hamiltonian $H^a$ with slope $a$ at infinity, the complex has a finite number of generators, and the "jumps" occur only at slopes belonging to the spectrum of the contact boundary, which is discrete.

\subsection{Viterbo's Transfer Map}\label{ssec:viterboTransfer}
Our goal is to describe a version of Viterbo's transfer map in wrapped Floer homology, that respects the action filtration, as in \cite{alves2019dynamically}. In our particular setting, we map from $M$ to $M_\eps$, and the Lagrangians are conical within the trivial cobordism $M_{[\eps,1]}$. Such setting allows for the transfer map to be realized solely by continuation maps, a fact which will play a role in commutativity of certain diagrams. We now outline its construction.

Unlike \cite{alves2019dynamically}, we only consider the transfer map with respect to a trivial cobordism. Additionally, we do not require one of the Lagrangians to be conical, and here the primitive functions on the Lagrangians are not necessarily zero, but rather constant in a neighborhood of the boundary.

Let us now turn to the theorem's statement and proof. \footnote{Note that our convention for subscripts and for using the prime symbol as in $L^\prime$ is different from the convention used by \cite{alves2019dynamically}.}, and prove it.
Let $M:=(Y_M,\omega_M,\lambda_M)$ be a Liouville domain and let $L^{}_M$ and $L^\prime_M$ be two disjoint asymptotically conical Lagrangians in $M$. Moreover assume that $L_\eps := L_M \cap M_\eps$ and $L^\prime_{M_\eps} := L^\prime_{M_\eps} \cap M_\eps$ are asymptotically conical in $M_\eps$. 

\begin{proposition}[Viterbo's transfer map, variation of {\cite[Section 3]{alves2019dynamically}}]   
    Assume that the Lagrangians $L_M$ and $L^\prime_M$ above, also satisfiy the properties:
    
    \begin{align}
    \begin{split}\label{transfer_admissible1}
    &\lambda\vert^{}_{L_M \setminus L_{M_\eps}} \text{ vanishes near the boundary }\partial (L_M \setminus L_{M_\eps}) = \partial L_M \cup \partial L_{M_\eps},\\
    &\text{and one can write }\lambda\vert^{}_{L_M \setminus L_{M_\eps}} = df, \text{ where }f \text{ vanishes near } \partial L_{M_\eps} \text{,}\\
    &\text{and } f\equiv A , \text{ a constant, near } \partial L_M. 
    \end{split}
    \end{align}
    
    \begin{align}
    \begin{split}\label{transfer_admissible2}
    &\lambda\vert^{}_{L^\prime_M \setminus L^\prime_{M_\eps}} \text{ vanishes near the boundary }\partial (L^\prime_M \setminus L^\prime_{M_\eps}) = \partial L^\prime_{M} \cup \partial L^\prime_{M_\eps},\\
    &\text{and one can write }\lambda\vert^{}_{L^\prime_M \setminus L^\prime_{M_\eps}} = dg, \text{ where }g \text{ vanishes near } \partial L^\prime_{M_\eps} \text{,}\\
    &\text{and } f\equiv B, \text{ a constant, near } \partial L^\prime_M. 
    \end{split}
    \end{align}
    
    \begin{align}
        \begin{split}\label{transfer_admissible3}
        &\text{The constants } A \text{ and } B \text{ satisfy } B-A\ge0.
    \end{split}
    \end{align}
    
    Then there exist morphisms
    \[{\mathcal{R}es}^{}_{M\to {M_\eps}}\colon HW\left(M,L_M \to L^\prime_M\right) \to HW\left(M_\eps,L_{M_\eps} \to L^\prime_{M_\eps}\right),\]
    compatible with the persistence morphisms, namely, for all $\mu < \nu$, the following diagram commutes:
    \begin{equation*}
        \begin{tikzcd}
            {HW^\mu\left(M,L_M \to L^\prime_M\right) } \arrow[d, "{\mathcal{R}es}^{}_{M\to M_\varepsilon}"] \arrow[rr, "\iota^{}_{\mu\to\nu}"] &  & {HW^\nu\left(M,L_M \to L^\prime_M\right) } \arrow[d, "{\mathcal{R}es}^{}_{M\to M_\varepsilon}"] \\
            {HW^\mu\left(M_\varepsilon,L_{M_\varepsilon} \to L^\prime_{M_\varepsilon}\right) } \arrow[rr, "\iota^{}_{\mu\to\nu}"]              &  & {HW^\nu\left(M_\varepsilon,L_{M_\varepsilon} \to L^\prime_{M_\eps}\right) }                    
        \end{tikzcd}
    \end{equation*}
\end{proposition}
\begin{proof}
We denote the flow of Liouville vector field of $M$ by $\phi$. Let $t$ be the Liouville coordinate on $\widehat{M}$ induced by taking the initial conditions to be at $\partial M$, namely $t$ is defined by $\phi^t(x) \colon (-\infty,\infty) \times \partial M \to  \widehat{M}$, and $\phi^0(\partial M) = \partial M$. Also let $r = e^t$ be the logarithmic Liouville coordinate,  expressed via 
\[\phi^{\log{r}}(x) \colon (0,\infty) \times \partial M \to  \widehat{M}.\]

Denote by $t^\prime$ the Liouville coordinate induced on $\widehat{M_\eps}=\widehat{M}$ by the Liouville flow with initial condition on $\partial M_\eps$, which we dentoe $\phi^\prime$, namely we have ${\phi^\prime}^{t^\prime}(x) \colon (-\infty,\infty) \times \partial M_\eps \to  \widehat{M}$ with ${\phi^\prime}^0(\partial M_\eps) =  \partial M_\eps$. We now let $r^\prime = e^{t^\prime}$ denote the logarithmic Liouville coordinate,  expressed via 
\[ {\phi^\prime}^{\log{r^\prime}}(x) \colon (0,\infty) \times \partial M_\eps \to  \widehat{M}.\]

To relate $r$ and $r^\prime$ note that by definition have that $\partial M_\eps = {\phi^{\prime}}^{\eps} (\partial M) = \phi^{\log \eps} (\partial M)$, therefore ${\phi^\prime}^{t^\prime}(\partial M_\eps) = {\phi^\prime}^{t^\prime}\phi^{\log \eps} (\partial M) = {\phi}^{t^\prime +\log \eps} (\partial M)$, so $t^\prime + \log \eps = t$. We obtain
\[
    e^{t^\prime + \log \eps} = e^t,
\]
hence,
\[
    \eps r^\prime = r,
\]
therefore $r^\prime = \frac{1}{\eps} r$.  (Note that $\eps < 1$.)

Thus, a radial Hamiltonian, $g(r^\prime) = \mu r^\prime + b$ is expressed in terms of $r$ as $h(r) = \frac{\mu}{\eps} r + b$.

For every noncharacteristic $\mu$, consider a Hamiltonian $H^\mu$, admissible for $M$ with slope $\mu$ at infinity, that is, $H^\mu$ such that:
    \begin{enumerate}
        \item $H^\mu$ equals a negative constant $-k$ in $M_{1-\delta}$ for some $\delta>0$.
        \item In $M_{[1-\delta,1]}$, the Hamiltonian $H^\mu$ is a radial convex function, increasing sharply near $\partial M$.
        \item $H^\mu(r,x)= h(r)=\mu r + b$ on $[1,\infty) \times \partial M$, with $b = -k -\mu$, where $x$ denotes the coordinate on the contact manifold $\partial M$.
    \end{enumerate} 

Also choose a Hamiltonian $G^\mu$ admissible for $M_\eps$ with slope $\mu$ at infinity, that is, such that:
    \begin{enumerate}
        \item $G^\mu$ is a negative constant $-k^\prime$ in $M_{\eps-\delta}$ for some $\delta>0$,
        \item In $M_{[\eps-\delta,\eps]}$, the Hamiltonian $G^\mu$ is a radial convex function, increasing sharply near $\partial M_\eps$.
         \item $G^\mu(r^\prime,x^\prime)= g(r^\prime)=\mu r^\prime + b^\prime$ on $[1,\infty) \times \partial M_\eps$, with $b^\prime = -k^\prime -\mu$, where $x^\prime$ denotes the coordinate on the contact manifold $\partial M_\eps$.
    \end{enumerate} 

Note that when expressed by the $r$ coordinate, $G^\mu$ becomes 
\[G^\mu(r,x^\prime):=G^\mu(r^\prime(r),x^\prime)= g(r^\prime(r))=\mu r^\prime(r) + b^\prime = \frac{\mu}{\eps} r + b^\prime, \]
on $[\eps,\infty) \times \partial M_\eps$. By choosing $-k^\prime > -k$, we obtain $G^\mu > H^\mu$ on $\widehat{M}$, since $\frac{1}{\eps}>1$.

Therefore, we may pick two monotone sequences of  Hamiltonians, $H^\mu$ and $G^\mu$, such that $G^\mu > H^\mu$ on $\widehat{M}$ and such that for every $a \in \mathbb{R}$
\[
\varinjlim_{\mu \to \infty} HW^a(H^\mu,L\to L^\prime) = HW^a(M,L\to L^\prime),
\]
and 
\[
\varinjlim_{\mu \to \infty} HW^a(G^\mu,L\to L^\prime) = HW^a(M_\eps,L\cap M_\eps \to L^\prime \cap M_\eps).
\]
For the latter, note that the "no escape lemma", Lemma \ref{lem:lagNoEscape}, guarantees that the chain complex for $CW^a(G^\mu, L\cap M_\eps \to L^\prime \cap M_\eps)$ does not depend on the extension of the Lagrangians to $\widehat{M_\eps}$, as long as the extension is exact, since they are conical near $\partial M_\eps$.

Picking monotone homotopies from $H^\mu$ to $G^\mu$, we obtain commuting squares of continuation maps for every $\mu < \nu$:
\begin{equation*}
    \begin{tikzcd}
        {HW^a(H^\mu, L \to L^\prime)} \arrow[rr, "\iota^{H^\mu \to H^\nu}"] \arrow[d, "\iota^{H^\mu \to G^\mu}"] &  & {HW^a(H^\nu, L \to L^\prime)} \arrow[d, "\iota^{H^\nu \to G^\nu}"] \\
        {HW^a(G^\mu, L \to L^\prime)} \arrow[rr, "\iota^{G^\mu \to G^\nu}"]                                      &  & {HW^a(G^\nu, L \to L^\prime)}                                     
    \end{tikzcd}
\end{equation*}

Note that the energy filtration is indeed respected by the maps. This follows from the action-energy computation for a continuation solution with a monotone homotopy, together with the fact that $B-A\ge 0$ ensures an action inequality.

These squares induce maps between the direct limits, namely, the Viterbo transfer maps:
\[
{\mathcal{R}es}^{}_{M\to M_\eps}\colon HW^a\left(M,L \to L^\prime\right) \to HW^a\left(M_\eps,L\cap M_\eps \to L^\prime\cap M_\eps\right),
\]
which, as we have shown, preserve the action filtration.
\end{proof}

\begin{remark}
    Unlike the general construction of Viterbo's transfer map, as in \cite{alves2019dynamically}, which uses both a continuation and a projection on a quotient complex, (or, in some versions, restriction to subcomplex), In the setting of a trivial cobordism, as we have shown, the transer map can be defined purely by a continuation map. This feature is crucial to our proof of commutativity of certain diagrams involving ${\mathcal{R}es}$, in Propositions \ref{prop:contAndContinuationCommute} and \ref{prop:conicalCommDiagram}.
\end{remark}

We also recall the following facts from \cite{alves2019dynamically} about the transfer map, which will be crucial in the proof of Theorem \ref{thm:mainTheorem}:
\begin{lemma}[\cite{alves2019dynamically} Lemma 3.1]\label{lem:viterboRestriction}
    Assume that $L, L^\prime$ are conical in $M_{[1-\eps,1]} = M\setminus M_\eps$, for $0<\eps <1$, then:
    \begin{enumerate}
        \item For all $\mu>0$ there exists an isomorphism of cohomology groups, denoted by $\varphi_\mu$:
            \[
            \operatorname{HW}^\mu\left(M_\eps,L\cap M_\eps \to L^\prime \cap M_\eps\right) \overset{\varphi_\mu}{\cong} \operatorname{HW}^{\frac{1}{\eps}\mu}\left(M,L\to L^\prime\right). 
            \]
        These isomorphisms are compatible with the persistence morphisms in the sense that for every $\mu < \nu$ the following diagram commutes:
        \begin{equation*}
            \begin{tikzcd}
            {HW^{\mu}\left(M_\varepsilon,L\cap M_\varepsilon\to L^\prime\cap M_\varepsilon\right)} \arrow[d, "\rotatebox{90}{$\sim$}", "\varphi_\mu"'] \arrow[rr, "\iota^{}_{\mu\to\nu}"] &  & {HW^{\nu}\left(M_\varepsilon,L\cap M_\varepsilon\to L^\prime\cap M_\varepsilon\right)} \arrow[d, "\rotatebox{90}{$\sim$}", "\varphi_\mu"'] \\
            {HW^{\frac{1}{\varepsilon}\mu}\left(M,L\to L^\prime\right)} \arrow[rr, "\iota^{}_{\frac{1}{\varepsilon}\mu\to\frac{1}{\varepsilon}\nu}"]                      &  & {HW^{\frac{1}{\varepsilon}\nu}\left(M,L\to L^\prime\right).}                                                               
            \end{tikzcd}                                                        
        \end{equation*}
        
        \item Moreover, the composition of the Viterbo restriction map 
        \[\operatorname{HW}^{\mu} \left(M,L\to L^\prime\right) \xrightarrow{{\mathcal{R}es}^{}_{M\to M\varepsilon}} \operatorname{HW}^\mu\left(M_\eps,L\cap M_\eps \to L^\prime \cap M_\eps\right),\]
        with the above map, $\varphi_\mu$, equals the persistence morphism
        \pushQED{\qed}
        \[
            \operatorname{HW}^{\mu} \left(M,L\to L^\prime\right) \xrightarrow{\operatorname{\iota}_{\mu\to \frac{1}{\eps}\mu}} \operatorname{HW}^{\frac{1}{\eps}\mu} \left(M,L\to L^\prime\right). \qedhere
        \]
        \popQED
    \end{enumerate}
\end{lemma}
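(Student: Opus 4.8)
The plan is to read off both items directly from the construction of the transfer map in the preceding proposition, using the explicit relation $r' = \tfrac{1}{\eps} r$ between the two Liouville coordinates. For Item 1, the point is that when $L,L'$ are conical in $M_{[1-\eps,1]}$, the Hamiltonians $G^\mu$ admissible for $M_\eps$ with slope $\mu$ at infinity become, when re-expressed in the $r$-coordinate of $M$, Hamiltonians with slope $\tfrac{1}{\eps}\mu$ at infinity (as computed in the proof: $g(r') = \mu r' + b'$ becomes $\tfrac{\mu}{\eps} r + b'$). Since the Lagrangians are conical over the trivial cobordism $M_{[\eps,1]}$, the no-escape lemma (Lemma \ref{lem:lagNoEscape}) guarantees that the Floer complex $CW^a(G^\mu, L\cap M_\eps \to L'\cap M_\eps)$ computed in $\widehat{M_\eps}$ agrees with the complex of the same Hamiltonian viewed in $\widehat M$ with all generators lying in $M_{[\eps,\infty)}$; in particular the truncated complexes agree. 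Passing to the direct limit over $\mu$, and using Proposition \ref{prop:actionAndSlope} to identify filtration-by-action with filtration-by-slope on each side, one obtains the isomorphism $\varphi_\mu\colon HW^\mu(M_\eps, L\cap M_\eps \to L'\cap M_\eps) \xrightarrow{\sim} HW^{\frac{1}{\eps}\mu}(M,L\to L')$. The commutativity of the square with the persistence morphisms $\iota_{\mu\to\nu}$ and $\iota_{\frac1\eps\mu\to\frac1\eps\nu}$ follows because $\varphi_\mu$ is induced by the identity on underlying generators (same Reeb/Hamiltonian chords, just relabeled by the coordinate change), and inclusions of action sublevels commute with relabeling.

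For Item 2, I would compare the two cofinal families of Hamiltonians used to compute $\mathcal{R}es_{M\to M_\eps}$ and the one used for $\varphi_\mu$. In the proof of the transfer proposition, $\mathcal{R}es$ is realized purely by continuation maps $\iota^{H^\mu\to G^\mu}$ from Hamiltonians $H^\mu$ admissible for $M$ (slope $\mu$ at infinity) to Hamiltonians $G^\mu$ admissible for $M_\eps$ (slope $\mu$ at infinity, i.e. slope $\tfrac1\eps\mu$ when seen in $M$), with $G^\mu > H^\mu$. Composing with $\varphi_\mu$ just means re-reading the target $HW^\mu(M_\eps,\cdot)$ as $HW^{\frac1\eps\mu}(M,\cdot)$ via the coordinate change. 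But then $\iota^{H^\mu\to G^\mu}$, viewed entirely inside $\widehat M$, is a continuation map between two admissible Hamiltonians for $M$, one of slope $\mu$ and one of slope $\tfrac1\eps\mu$ at infinity; by the discussion following Proposition \ref{prop:actionAndSlope} (that the slope-filtration isomorphism intertwines persistence morphisms with continuation morphisms), its direct limit is exactly the persistence morphism $\iota_{\mu\to\frac1\eps\mu}\colon HW^\mu(M,L\to L')\to HW^{\frac1\eps\mu}(M,L\to L')$. One must check that the monotone homotopies can be chosen consistently across $\mu$ so that all the relevant squares commute, which is the usual cofinality argument; the no-escape lemma again ensures no generators with action above the truncation level interfere.

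The main obstacle I expect is bookkeeping around the two coordinate systems and the no-escape lemma: one has to verify carefully that all Hamiltonian chords of $G^\mu$ relevant below action level $a$ genuinely lie in the region $M_{[\eps,\infty)}$ where the two pictures coincide, and that increasing the slope past a characteristic value only introduces chords of strictly higher action (located at larger radial coordinate), so that the truncated complexes — and hence the direct limits — are literally identified, not merely isomorphic up to higher-order corrections. This is exactly the point where conicality of $L,L'$ over the trivial cobordism $M_{[1-\eps,1]}$ is used, and where a small $C^\infty$-perturbation to achieve regularity must be shown not to disturb the identification. Everything else is a diagram chase using the commuting squares already established in the transfer proposition together with the structure theorem for persistence modules.
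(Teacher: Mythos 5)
This lemma is not proved in the paper: it is quoted from \cite{alves2019dynamically} (Lemma 3.1), and the surrounding remarks make clear that the intended mechanism is the Liouville--flow rescaling map, which identifies the Floer data for $M_\varepsilon$ with Floer data for $M$, multiplies all actions by $\tfrac{1}{\varepsilon}$, and gives a bijection between Floer and continuation solutions before and after rescaling. Your proposal replaces this rescaling by a cofinality argument, and that is where it has a genuine gap. A Hamiltonian $G^\mu$ admissible for $M_\varepsilon$ with slope $\mu$ in the coordinate $r'$ is \emph{not} admissible for $M$ when re-expressed in $r$: it is steep already from $r=\varepsilon$ and is positive on part of the band $M_{[\varepsilon,1]}$, so it is not negative on $M$. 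Consequently Proposition \ref{prop:actionAndSlope} applied to $M$ says nothing about the directed system $\{G^\mu\}$, and the two families (M-admissible Hamiltonians of slope $<\tfrac{1}{\varepsilon}\mu$ versus $M_\varepsilon$-admissible Hamiltonians of slope $<\mu$) are not mutually cofinal at a fixed slope bound: every $M$-admissible $H$ satisfies $H<G^\mu$ on the band near $\partial M$, but no $M$-admissible $H$ dominates $G^\mu$ there. So the two direct limits cannot be ``literally identified''; one only gets continuation maps in one direction, and proving these induce an isomorphism on the truncated homologies is exactly the content of the cited lemma, established there by conjugating with the Liouville flow (this is also where conicality of $L,L'$ in the band is used, so that the flow carries the completed Lagrangians to themselves).

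Relatedly, the phrase ``$\varphi_\mu$ is induced by the identity on underlying generators, just relabeled by the coordinate change'' understates what happens to the filtration: a Reeb chord of $\alpha_M$ of length $\ell$ appears as a generator of action approximately $\varepsilon\ell$ for $M_\varepsilon$-admissible Hamiltonians (since $\lambda\vert_{\partial M_\varepsilon}=\varepsilon\,\alpha_M$) but of action approximately $\ell$ for $M$-admissible ones; the isomorphism genuinely rescales the filtration by $\tfrac{1}{\varepsilon}$, it does not merely relabel sublevels, and this rescaling is what the Liouville-flow conjugation implements on the chain level. For Item 2 your outline is the right one and matches the paper's intent (the transfer is realized purely by continuation maps $\iota^{H^\mu\to G^\mu}$, and composing with $\varphi_\mu$ turns this into the persistence morphism $\iota_{\mu\to\frac{1}{\varepsilon}\mu}$), but as written it again leans on the false claim that $G^\mu$ is $M$-admissible; the correct justification is that the rescaling map also carries continuation solutions bijectively, so that $\varphi_\mu\circ\iota^{H^\mu\to G^\mu}$ agrees in homology with a continuation map between $M$-admissible Hamiltonians of slopes $\mu$ and $\tfrac{1}{\varepsilon}\mu$, which under Proposition \ref{prop:actionAndSlope} is $\iota_{\mu\to\frac{1}{\varepsilon}\mu}$. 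In short: the skeleton and the places where conicality and the no-escape lemma enter are right, but the missing rescaling step is the heart of the lemma and cannot be bypassed by cofinality.
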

\begin{remark}
    The second statement of Item 1 in the above Lemma is not explicitly stated in \cite{alves2019dynamically} Lemma 3.1, but it follows from the proof of the first item, since the rescaling map provides a bijection both for Floer and continuation solutions, between the solutions before and after the rescaling.
\end{remark}

\begin{remark}
    Note that while our map is seemingly defined differently from that of \cite{alves2019dynamically}, in the case of a trivial cobordism one can realize the projection map onto a subcomplex, via a composition with a continuation map, hence in homology the two maps coincide.
\end{remark}

\subsection{Continuation Maps Associated to an Exact Lagrangian Isotopy}\label{ssec:continuationMovingBdry}
In this subsection we introduce morphisms in wrapped Floer homology, which are induced by exact Lagrangian isotopies. They are constructed via the count of Floer solutions with moving boundary conditions. See \cite{oh1993floer} for the original construction in Floer theory in the closed monotone setting and \cite{oh2009unwrapped} for a far reaching generalization to open manifolds. Since the author knows no reference that deals with our specific generality, we give the construction in full detail, based on the construction in \cite{oh1993floer}.

Consider an exact asymptotically conical Lagrangian $L$, and denote by $f$ a primitive $\lambda\vert^{}_{L}=df$ where $f\colon L \to \mathbb{R}$. Let $L^\prime_0$ be another exact asymptotically conical Lagrangian, and let $\left\lbrace L^\prime_\tau \right\rbrace_{0\le \tau\le T}$ be an isotopy of exact asymptotically conical Lagrangians, with $\lambda\vert^{}_{L_\tau} = dg_\tau$ for $g_\tau \colon L^\prime_\tau \to \mathbb{R}$.
Assume that the isotopy $\left\lbrace L^\prime_\tau \right\rbrace_{0\le \tau\le T}$ is supported away from the boundary of $M$, i.e. it is constant in a neighborhood of $\partial M$.

Note that in particular it follows that $f$ and $g_\tau$ are constant near $\partial M$. Let us denote by $A$ the value of $f$ near $\partial M$ and by $B_\tau$ the value of $g_\tau$ near $\partial M$.

As the domain of Floer solution is unbounded, namely $\mathbb{R} \times [0,1]$, let us reparameterize the isotopy accordingly. Choose a smooth function $\beta(\tau) \colon \mathbb{R} \to \mathbb{R}$ such that $\beta \equiv 0$ for $\tau\in (-\infty,0]$, $\beta \equiv T$ for $\tau \in [1,\infty)$, and in $[0,1]$, $\beta(\tau)$ is monotone increasing, mapping the interval $[0,1]$ onto $[0,T]$. We now  consider the isotopy $\lbrace L^\prime_{\beta(\tau)} \rbrace_{\tau \in \mathbb{R}}$.

By \cite[Exercise 11.3.17]{mcduff2017introduction} there exists a Hamiltonian isotopy $\Psi_\tau\colon M \to M$, such that $L^\prime_{\beta(\tau)} = \Psi_\tau \left( L^\prime_0 \right)$. Denote by $G_\tau \colon M\times [0,1] \to \mathbb{R}$ the Hamiltonian generating the isotopy $\Psi$. We can assume that $G_\tau$ is supported away from the boundary of $M$, by multiplying it by a suitable cutoff function.

We extend the exact Lagrangians and the isotopy to the completion, $\widehat{M}$, by completing the Lagrangians, $\widehat{L}$, $\widehat{L^\prime_0}$, and all of $\widehat{L^\prime_\tau}$, and extending the isotopy by identity. Such extension is posslible since the isotopy was chosen to be identity near $\partial M$.

We consider the moduli space of Floer solutions with moving boundary conditions, of Fredholm index 0, connecting $\gamma_-$ and $\gamma_+$. We denote it by $\mathcal{M}^0\left(\gamma_-,\gamma_+, H, J, \lbrace \widehat{L}^\prime_{\beta(\tau)} \rbrace_{\tau\in\mathbb{R}} \right)$, and its elements are the Fredholm index $0$ solutions satisfying:
\begin{equation}\label{eqn:floerMovingBoundary}
    \begin{aligned}
        &u\colon \mathbb{R}\times [0,1] \to \widehat{M}, \\
        &\overline{\partial}^{}_{J,H}u := \partial_s u + J\left( \partial_t u - X_H\left(u\right) \right) = 0, \\
        &\lim_{s\to\pm \infty} u(s,t) = \gamma_{\pm}(t), \\
        &u(s,0)\in \widehat{L} \text{ and }  u(s,1)\in \widehat{L^\prime}^{}_{\!\!\beta(s)}
    \end{aligned}  
\end{equation}

We define a morphism $\mathcal{C}ont_{\{L^\prime_\tau\}_{0}^{T}}\colon CF(H, \widehat{L}\to \widehat{L^\prime_0}) \to  CF(H, \widehat{L}\to \widehat{L^\prime_T})$ by counting the elements of $\mathcal{M}^0\left(\gamma_-,\gamma_+, H, J, \lbrace \widehat{L^\prime}^{}_{\!\!\beta(\tau)} \rbrace_{\tau\in\mathbb{R}} \right)$ :
\[
    \mathcal{C}ont_{\{L^\prime_\tau\}_{0}^{T}}\, x \quad = \sum_{x^\prime \in \mathcal{P}^H_{\widehat{L}\to \widehat{L^\prime_T}}} \#_2\mathcal{M}^0\left(x,x^\prime, H, J, \lbrace \widehat{L^\prime}^{}_{\!\!\beta(\tau)} \rbrace_{\tau\in\mathbb{R}} \right) \cdot x^\prime.
\]

All of the analysis of such solutions is the same as in the case of fixed boundary conditions, namely, bubbling is local, and gluing and compactness analysis is unchanged, since breaking and gluing happen at the ends, where the boundary conditions are fixed and do not depend on $s$. See \cite{oh1993floer}. 

The deviations from the standard treatment of continuation solutions are twofold. First, in establishing a priori $C^0$-bounds, ensuring that solutions do not escape to infinity, and second, in establishing a priori energy bounds to ensure the applicability of Gromov compactness. These bounds are addressed in \ref{ssec:energyAndC0}.
The same principles hold for the other moduli spaces of moving boundary solutions, introduced later in the text.

Using standard gluing and breaking arguments, we prove that:

\begin{proposition}\label{prop:contAndContinuationCommute}
    $\mathcal{C}ont_{\{L^\prime_\tau\}_{0}^{T}}$ is a chain map. Moreover, in homology it commutes with the continuation maps induced by a nondecreasing homotopy from $H$ to $H^\prime$, for $H \le H^\prime$, namely the following diagram commutes:
     \begin{equation*}
        \pushQED{\qed}
        \begin{tikzcd}
            {HW(H, \widehat{L}\to \widehat{L^\prime_0})} \arrow[rr, "\mathcal{C}ont_{\{L^\prime_\tau\}_{0}^{T}}"] \arrow[d, "{\iota^{H,H^\prime}}"] &  & {HW(H, \widehat{L}\to \widehat{L^\prime_T})} \arrow[d, "{\iota^{H,H^\prime}}"] \\
            {HW(H^\prime, \widehat{L}\to \widehat{L^\prime_0})} \arrow[rr, "\mathcal{C}ont_{\{L^\prime_\tau\}_{0}^{T}}"]                            &  & {HW(H^\prime, \widehat{L}\to \widehat{L^\prime_T})}.&\qedhere 
            \end{tikzcd} 
            \popQED
     \end{equation*}
\end{proposition}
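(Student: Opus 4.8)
The plan is to set up the standard two-parameter family of Floer equations interpolating between the two ways of going around the square, then use a compactness/gluing argument for the resulting one-dimensional moduli spaces with boundary to produce the chain homotopy. Concretely, write $\mathcal{C}^{H} := \mathcal{C}ont_{\{L'_\tau\}_0^T}$ for the moving-boundary morphism with Hamiltonian $H$, and $\iota := \iota^{H,H'}$ for the continuation map attached to a fixed nondecreasing homotopy $(H_s)$ from $H$ to $H'$. The two composites $\iota\circ\mathcal{C}^{H}$ and $\mathcal{C}^{H'}\circ\iota$ are computed by gluing, at the two ends of $\mathbb{R}\times[0,1]$, a moving-boundary strip and a fixed-boundary continuation strip; the idea is to interpolate these broken configurations through a single moduli problem on $\mathbb{R}\times[0,1]$ carrying both a (shifted) homotopy of Hamiltonians and a (shifted) reparametrization of the Lagrangian isotopy, controlled by a large gluing parameter $R$.

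First I would fix data: a reparametrization profile $\beta_R(s)$ of the isotopy and a homotopy profile $H_{s}^{R}$ of the Hamiltonians, both depending on $R\ge R_0$, such that for $R$ large the pair $(H^R, \beta_R)$ near $s=+\infty$ looks like the $\iota\circ\mathcal{C}^{H}$ configuration (first move the Lagrangian, then raise the Hamiltonian) and near $s=-\infty$ like $\mathcal{C}^{H'}\circ\iota$ (first raise the Hamiltonian, then move the Lagrangian); for $R=R_0$ the two profiles can be taken ``simultaneous''. Then I consider, for each $R$, the moduli space $\mathcal{M}_R(\gamma_-,\gamma_+)$ of Fredholm index $-1$ solutions of
\[
\partial_s u + J_t\bigl(\partial_t u - X_{H^R_s}(u)\bigr) = 0, \qquad u(s,0)\in\widehat{L},\quad u(s,1)\in\widehat{L'}_{\beta_R(s)},
\]
and the parametrized moduli space $\bigsqcup_{R\ge R_0}\{R\}\times\mathcal{M}_R$, which is generically a $1$-manifold with boundary. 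Its boundary at $R=R_0$ computes a chain homotopy operator $K\colon CF(H,\widehat L\to\widehat{L'_0})\to CF(H',\widehat L\to\widehat{L'_T})$ (shifted by one in whatever grading/index bookkeeping is used — here there is no $\mathbb{Z}$-grading but the index count still gives the homotopy), while its ends as $R\to\infty$ and at the $s$-breaking, together with the usual breaking of index-$0$ components, account for $\iota\circ\mathcal{C}^{H} - \mathcal{C}^{H'}\circ\iota$ and for $K\partial \pm \partial K$. Counting boundary points mod $2$ of this $1$-manifold yields the chain homotopy identity, hence commutativity in homology.

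The key inputs I would invoke rather than reprove: (i) the index, transversality, and gluing package for moving-boundary Floer strips, which the paragraph preceding the proposition already asserts is ``the same as in the case of fixed boundary conditions'' because breaking and gluing happen at the $s$-ends where the boundary conditions are $s$-independent; and (ii) the a priori $C^0$ and energy bounds of subsection \ref{ssec:energyAndC0}, applied now uniformly in the gluing parameter $R$ — this uniformity is what guarantees Gromov compactness of $\bigsqcup_R\mathcal{M}_R$ and is the only genuinely new point over the fixed-boundary case. The energy estimate must absorb both the $\partial_s H^R_s\ge 0$ contribution (harmless, as in ordinary continuation maps, since the homotopy is monotone) and the contribution of the moving Lagrangian boundary, which is controlled by the primitives $B_\tau$ of $\lambda|_{L'_\tau}$ exactly as in the definition of $\mathcal{C}ont$; the point is that these two contributions live in ``different'' parts of the square and can be estimated independently, uniformly in $R$.

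The main obstacle is precisely this uniform energy bound together with the no-escape behavior as $R\to\infty$: one must check that stretching the neck between the two elementary cobordisms does not let energy concentrate or solutions drift to the conical end. I expect this to follow by combining the $C^0$ maximum-principle argument of subsection \ref{ssec:energyAndC0} (valid on any subregion where the Hamiltonian is conical and the Lagrangians are conical, independently of $s$) with the observation that the isotopy and the homotopy are both compactly supported away from $\partial M$, so that outside a fixed compact set the equation is $s$-independent and the standard maximum principle applies verbatim for every $R$. Granting that, the algebraic part is the routine ``$1$-manifold with boundary'' count, and the commutation of the square in homology is immediate.
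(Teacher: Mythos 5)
Your proposal is correct and follows essentially the same route as the paper: both rewrite the two composites, via gluing, as counts over a single strip with shifted continuation data, and then produce the chain homotopy from a one-parameter family of continuation data with moving Lagrangian boundary, with compactness supplied by the $C^0$ and energy bounds of Subsection \ref{ssec:energyAndC0} (the paper fixes the gluing parameter $R$ and interpolates the two shifted homotopies $H_{s-R}$ and $H_{s+R}$ by an auxiliary parameter $\theta$, which differs only cosmetically from parametrizing by the relative shift itself). One bookkeeping correction: the chain homotopy must be defined by counting the interior index $-1$ (parametrized index $0$) solutions of the family, not by the boundary stratum at $R=R_0$, and the two composites arise at the two extremes of the relative-shift parameter rather than at the two $s$-ends of a single strip --- with that adjustment your boundary count yields exactly the paper's relation $d\mathfrak{H}+\mathfrak{H}d=\Psi^{top}-\Psi^{bot}$.
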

\begin{proof}
    Recall that due to the Lagrangians being exact, and $\omega$ being exact, there is no disc bubbling nor sphere bubbling. This means that all relevant moduli spaces can be compactified using only broken solutions. Moreover, breaking and gluing occurs at the ends, at which we impose the usual Lagranian boundary conditions, not the moving ones, and hence the analysis of breaking and gluing is the same as in the case of standard continuation maps.

    The proof that $\mathcal{C}ont_{\{L^\prime_\tau\}_{0}^{T}}$ is a chain map, has structure identical to the proof that any continuation map is a chain map. One considers the compactified moduli space solutions of equations (\ref{eqn:floerMovingBoundary}) of Fredholm index $1$. The zero-dimensional boundary is composed of pairs $(u,v)$ where either:
    \begin{enumerate}
        \item $u$ is a Floer solution with asymptotes $\gamma_-$ and $\gamma_0$, and $v$ is a continuation solution with moving boundary (Equation (\ref{eqn:floerMovingBoundary})), with asymptotes $\gamma_0$ and $\gamma_+$, for $\gamma_-,\gamma_0,\gamma_+$ some Hamiltonian chords.
        \item $u$ is a continuation solution with moving boundary (Equation (\ref{eqn:floerMovingBoundary})), with asymptotes $\gamma_-$ and $\gamma_0$, and $v$ is a Floer solution with asymptotes $\gamma_0$ and $\gamma_+$, for $\gamma_-,\gamma_0,\gamma_+$ some Hamiltonian chords.        
    \end{enumerate}
    The count of this zero-dimensional boundary gives the desired relations.

    To prove the commutativity of 
    \begin{equation*}
        \begin{tikzcd}
            {HW(H, \widehat{L}\to \widehat{L^\prime_0})} \arrow[rr, "\mathcal{C}ont_{\{L^\prime_\tau\}_{0}^{T}}"] \arrow[d, "{\iota^{H,H^\prime}}"] &  & {HW(H, \widehat{L}\to \widehat{L^\prime_T})} \arrow[d, "{\iota^{H,H^\prime}}"] \\
            {HW(H^\prime, \widehat{L}\to \widehat{L^\prime_0})} \arrow[rr, "\mathcal{C}ont_{\{L^\prime_\tau\}_{0}^{T}}"]                            &  & {HW(H^\prime, \widehat{L}\to \widehat{L^\prime_T})},&\
            \end{tikzcd} 
     \end{equation*}
 we consider the maps on the chain level:
    \begin{equation*}
        \begin{tikzcd}
            {CW(H, \widehat{L}\to \widehat{L^\prime_0})} \arrow[rr, "\mathcal{C}ont_{\{L^\prime_\tau\}_{0}^{T}}"] \arrow[d, "{\iota^{H,H^\prime}}"] &  & {CW(H, \widehat{L}\to \widehat{L^\prime_T})} \arrow[d, "{\iota^{H,H^\prime}}"] \\
            {CW(H^\prime, \widehat{L}\to \widehat{L^\prime_0})} \arrow[rr, "\mathcal{C}ont_{\{L^\prime_\tau\}_{0}^{T}}"]                            &  & {CW(H^\prime, \widehat{L}\to \widehat{L^\prime_T})}.&
        \end{tikzcd} 
     \end{equation*}

    By gluing, we show that each the two compositions, namely, the one via the top path and the one via the bottom path, equal each, respectively, to one of two maps depicted below:
     \begin{equation*}
        \begin{tikzcd}
            {CF(H, \widehat{L}\to \widehat{L^\prime_0})} \arrow[rr, "\Psi^{top}", bend left] \arrow[rr, "\Psi^{bot}", bend right] &  & {CF(H^\prime, \widehat{L}\to \widehat{L^\prime_T})}
        \end{tikzcd}
    \end{equation*}
    where the maps $\Psi^{top}$ and $\Psi^{bot}$ are defined by counts of index $0$ solutions to certain continutation maps with moving boundary conditions, described as follows:

As in \ref{ssec:continuationMovingBdry} choose a smooth function $\beta(\tau) \colon \mathbb{R} \to \mathbb{R}$ such that $\beta \equiv 0$ for $\tau\in (-\infty,0]$, $\beta \equiv T$ for $\tau \in [1,\infty)$, and in $[0,1]$, $\beta(\tau)$ is monotone increasing, mapping the interval $[0,1]$ onto $[0,T]$. The isotopy $\lbrace L^\prime_{\beta(\tau)} \rbrace_{\tau \in \mathbb{R}}$, is given by $L^\prime_{\beta(\tau)} = \phi^\tau_{G_\tau} \left( L^\prime_0 \right)$. For a Hamiltonian $G_\tau \colon M\times [0,1] \to \mathbb{R}$, which is assumed to be supported away from the boundary of $M$.

Recall that in defining $\iota^{H,H^\prime}$ we choose a generic increasing homotopy through admissible Hamiltonians, $\left(H_s\right)_{s\in\mathbb{R}}$, such that $\partial_s H_s \ge 0$ with $H_s = H$ near $-\infty$, and $H_s = H^\prime$ near $\infty$, and count the associated continuation solutions.

By gluing, (see e.g. \cite[Proposition 19.5.1]{oh2015symplectic}, for an analogous result in the context of continuation maps in Hamiltonian Floer homology),  we have that for some $R>0$ large enough, which also can be taken large enough so that $H_{s-R} = H$ for $s\le 1$, the map $\Psi^{top}$ is given by 
\[
    \Psi^{top}\, x \quad = \sum_{x^\prime \in \mathcal{P}^H_{\widehat{L}\to \widehat{L^\prime_T}}} \#_2\mathcal{M}^0\left(x,x^\prime, H_{s-R}, J, \lbrace \widehat{L^\prime}^{}_{\!\!\beta(\tau)} \rbrace_{\tau\in\mathbb{R}} \right) \cdot x^\prime,
\]

where, for a given homotopy of Hamiltonians $H_s$, the space  
\begin{equation}\label{eqn:continuationMovingBoundaryNotation}\mathcal{M}^0\left(\gamma_-,\gamma_+, H_{s}, J, \lbrace \widehat{L^\prime}^{}_{\!\!\beta(\tau)} \rbrace_{\tau\in\mathbb{R}} \right),\end{equation}
is defined to be the moduli space of Floer continuation solutions with moving boundary conditions, of Fredholm index $0$, connecting $\gamma_-$ and $\gamma_+$, solving:
\begin{equation}\label{eqn:continuationMovingBoundary}
    \begin{aligned}
        &u\colon \mathbb{R}\times [0,1] \to \widehat{M}, \\
        &\overline{\partial}^{}_{J,H}u := \partial_s u + J\left( \partial_t u - X_{H_{s}}\left(u\right) \right) = 0, \\
        &\lim_{s\to\pm \infty} u(s,t) = \gamma_{\pm}(t), \\
        &u(s,0)\in \widehat{L} \text{ and }  u(s,1)\in \widehat{L^\prime}^{}_{\!\!\beta(s)}.
    \end{aligned}  
\end{equation}

Similarly, $\Psi^{bot}$ is given by 
\[
    \Psi^{bot}\, x \quad = \sum_{x^\prime \in {\scalebox{0.9}{$ \mathcal{P}$}}^H_{\widehat{L}\to \widehat{L^\prime_T}}} \#_2\mathcal{M}^0\left(x,x^\prime, H_{s+R}, J, \lbrace \widehat{L^\prime}^{}_{\!\!\beta(\tau)} \rbrace_{\tau\in\mathbb{R}} \right) \cdot x^\prime,
\]
for $R$ large enough, which can also be assumed to be large enough so that $H_{s+R}= H^\prime$ for $s\ge 0$.

To finish the proof of commutativity, we present a chain homotopy 
\[\mathfrak{H}:{CF(H, \widehat{L}\to \widehat{L^\prime_0})} \to {CF(H^\prime, \widehat{L}\to \widehat{L^\prime_T})},\]
between $\Psi_{top}$ and $\Psi_{bot}$, namely, $d\mathfrak{H} + \mathfrak{H}d = \Psi_{top} - \Psi_{bot}$.

To define $\mathfrak{H}$ we construct an appropriate moduli space; the map is defined by counting its points. Consider a generic homotopy of monotone homotopies, through addmissible hamiltonians, $H_{s,\theta}$, with $\theta \in [0,1]$, such that $H_{s,0} = H_{s-R}$ and $H_{s,1} = H_{s+R}$.

We define the space  $\mathcal{M}^{-1}\left(\gamma_-,\gamma_+, H_{s,\theta}, J, \lbrace \widehat{L^\prime}^{}_{\!\!\beta(\tau)} \rbrace_{\tau\in\mathbb{R}} \right)$ to be the moduli space of pairs $(u,\theta)$ where $u$ is a Floer continuation solution with moving boundary conditions, of Fredholm index $-1$ (i.e. as a pair $(u,\theta)$ it has index $0$), connecting $\gamma_-$ and $\gamma_+$, solving:
\begin{equation}\label{eqn:chainhomotopyMovingBoundary}
    \begin{aligned}
        &u\colon \mathbb{R}\times [0,1] \to \widehat{M}, \\
        &\overline{\partial}^{}_{J,H_{s,\theta}}u := \partial_s u + J\left( \partial_t u - X_{H_{s,\theta}}\left(u\right) \right) = 0, \\
        &\lim_{s\to\pm \infty} u(s,t) = \gamma_{\pm}(t), \\
        &u(s,0)\in \widehat{L} \text{ and }  u(s,1)\in \widehat{L^\prime}^{}_{\!\!\beta(s)},
    \end{aligned}  
\end{equation}
and set 
\[
    \mathfrak{H}\, x \quad = \sum_{x^\prime \in \mathcal{P}^H_{\widehat{L}\to \widehat{L^\prime_T}}} \#_2\mathcal{M}^{-1}\left(\gamma_-,\gamma_+, H_{s}, J, \lbrace \widehat{L^\prime}^{}_{\!\!\beta(\tau)} \rbrace_{\tau\in\mathbb{R}} \right) \cdot x^\prime.
\]

Now consider  $\overline{\mathcal{M}^{0}\left(\gamma_-,\gamma_+, H_{s,\theta}, J, \lbrace \widehat{L^\prime}^{}_{\!\!\beta(\tau)} \rbrace_{\tau\in\mathbb{R}} \right)}$, which is the closure of the moduli space of index $1$ pairs $(u,\theta)$, defined analogously to the moduli space above. I.e. $u$ is an index $0$ solution to (\ref{eqn:chainhomotopyMovingBoundary}) with a given $\theta$. Let us now describe the points in the zero-dimensional boundary. They are of one of several types:

  \begin{enumerate}
        \item Pairs $(v,w)$ where either:
        \begin{enumerate}
            \item $v$ is a Floer solution with asymptotes $\gamma_-$ and $\gamma_0$, and $w=(u,\theta)$ is a solution to Equation (\ref{eqn:chainhomotopyMovingBoundary}) (chain homotopy with with moving boundary equation), with asymptotes $\gamma_0$ and $\gamma_+$, for $\gamma_-,\gamma_0,\gamma_+$ some Hamiltonian chords.
            \item $v=(u,\theta)$ is a solution to Equation (\ref{eqn:chainhomotopyMovingBoundary}) (chain homotopy with with moving boundary equation), with asymptotes $\gamma_-$ and $\gamma_0$, and $v$ is a Floer solution with asymptotes $\gamma_0$ and $\gamma_+$, for $\gamma_-,\gamma_0,\gamma_+$ some Hamiltonian chords.      
        \end{enumerate}
        \item Solutions $u$ solving Equation (\ref{eqn:continuationMovingBoundary}) with parameters corresponding to $\Psi^{top}$.
        \item Solutions $u$ solving Equation (\ref{eqn:continuationMovingBoundary}) with parameters corresponding to $\Psi^{bot}$
    \end{enumerate}
 The boundary relation thus implies the algebraic relation $d\mathfrak{H} + \mathfrak{H}d = \Psi_{top} - \Psi_{bot}$.

\end{proof}

Picking $H$ to be an admissible Hamiltonian with slope $a$ at infinity and $H^\prime$ to be an admissible Hamiltonian with slope $b$ at infinity, by the above proposition, and by the isomorphism (\ref{eqn:actionAndSlope}) identifying the action filtration with the slope filtration, we deduce the following proposition:

\begin{proposition}\label{prop:movingBdryContPerModMorphism} 
    Recall our notation $B_\tau$ for the constant value of $g_\tau$ near $\partial M$. We denote $\beta:= B_T - B_0$.
    The morphisms $\mathcal{C}ont_{\{L^\prime_\tau\}_{0}^{T}}$ induce a persistence module morphism from $HW(M,L\to L_0^\prime)$ to $HW(M,L\to L_T^\prime)^{[+(-\beta)]}$, namely, for all $a\le b$ the following diagram commutes:
    \begin{equation*}
        %
        \pushQED{\qed}
        \begin{tikzcd}
            {HW^a(M, L \to L^\prime_0)} \arrow[rr, "\mathcal{C}ont_{\{L^\prime_\tau\}_{0}^{T}}"] \arrow[d, "\iota_{a\to b}"] &  & {HW^{a-\beta}(M, L \to L^\prime_T)} \arrow[d, "\iota_{(a-\beta)\to (b-\beta)}"] \\
            {HW^b(M, L \to L^\prime_0)} \arrow[rr, "\mathcal{C}ont_{\{L^\prime_\tau\}_{0}^{T}}"]                             &  & {HW^{b-\beta}(M, L \to L^\prime_T)}& \qedhere                                    
        \end{tikzcd}
        \popQED
    \end{equation*}
\end{proposition}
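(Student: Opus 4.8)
The plan is to deduce Proposition \ref{prop:movingBdryContPerModMorphism} from Proposition \ref{prop:contAndContinuationCommute} by passing to the action-filtered, direct-limit level. First I would pick, for each noncharacteristic slope $a$, a cofinal family of admissible Hamiltonians $H^a$ with slope $a$ at infinity and, using the isomorphism (\ref{eqn:actionAndSlope}) of Proposition \ref{prop:actionAndSlope}, identify $\varinjlim_{H\text{'s slope}<a} HW(H,\widehat L\to\widehat L^\prime_0)$ with $HW^{a+\gamma_0}(M,L\to L^\prime_0)$, where $\gamma_0 = A - B_0$, and similarly $\varinjlim_{H\text{'s slope}<a} HW(H,\widehat L\to\widehat L^\prime_T)\cong HW^{a+\gamma_T}(M,L\to L^\prime_T)$ with $\gamma_T = A - B_T$. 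The key point is that the maps $\mathcal{C}ont_{\{L^\prime_\tau\}_{0}^{T}}$ on the $HW(H,-)$ level commute with the continuation maps $\iota^{H,H^\prime}$ raising the slope (this is exactly the content of Proposition \ref{prop:contAndContinuationCommute}), so they are compatible with the directed systems and induce a map on each direct limit $\varinjlim_{H\text{'s slope}<a}$. Translating both sides through (\ref{eqn:actionAndSlope}) gives a map $HW^{a+\gamma_0}(M,L\to L^\prime_0)\to HW^{a+\gamma_T}(M,L\to L^\prime_T)$; since $\gamma_T - \gamma_0 = B_0 - B_T = -\beta$, the target filtration level is shifted by $-\beta$ relative to the source, which is precisely the index shift $[+(-\beta)]$ in the statement. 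Re-indexing $a$ so that the source level is the desired $a$, the map reads $HW^a(M,L\to L^\prime_0)\to HW^{a-\beta}(M,L\to L^\prime_T)$.

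Next I would verify compatibility with the persistence morphisms $\iota_{a\to b}$. On the direct-limit-over-slopes side, increasing the truncation level from $a$ to $b$ corresponds to enlarging the indexing category of the direct limit (from slopes $<a$ to slopes $<b$), and the induced map on direct limits is exactly the persistence morphism under the identification (\ref{eqn:actionAndSlope}) — this is the commuting square already recorded in the excerpt just before Proposition \ref{prop:movingBdryContPerModMorphism}. The square in the statement then becomes the outer rectangle of a diagram whose two constituent squares are (i) the slope-version of Proposition \ref{prop:contAndContinuationCommute}'s square, passed to the limit, and (ii) the identification (\ref{eqn:actionAndSlope}) intertwining persistence morphisms with slope-inclusion maps. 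Concatenating these two commuting squares yields the commuting square in the proposition, with the bottom row living at the shifted levels $a-\beta$ and $b-\beta$ and the bottom vertical arrow being $\iota_{(a-\beta)\to(b-\beta)}$.

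For this argument to make sense I should also confirm that the action-energy bookkeeping is consistent: the $C^0$ and energy bounds that make $\mathcal{C}ont_{\{L^\prime_\tau\}_{0}^{T}}$ well-defined are supplied in subsection \ref{ssec:energyAndC0}, and the primitives $f, g_\tau$ being constant near $\partial M$ (with values $A$ and $B_\tau$) is exactly what makes the action formula produce the clean shift by $\gamma_\tau = A - B_\tau$; I would remark that the isotopy being supported away from $\partial M$ guarantees this and also guarantees that the relevant Reeb-chord spectrum, hence the set of noncharacteristic slopes, is the same for the two boundary conditions, so the cofinal families can be indexed by a common set of slopes. The main obstacle — really the only nontrivial point — is the careful choice of cofinal sequences of Hamiltonians and homotopies so that all the squares genuinely commute on the nose after passing to direct limits (rather than merely up to the non-canonical isomorphisms); but this is handled precisely as in the proof of Proposition \ref{prop:actionAndSlope} and the discussion following it, invoking the no-escape lemma so that continuation maps send generators to generators represented by the same underlying chords on the low-slope part. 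Granting that, the proof is a formal diagram chase combining Propositions \ref{prop:actionAndSlope} and \ref{prop:contAndContinuationCommute}.
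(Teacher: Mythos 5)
Your proposal is correct and follows essentially the same route as the paper: the paper also deduces the statement from Proposition \ref{prop:contAndContinuationCommute} together with the slope--action correspondence of Proposition \ref{prop:actionAndSlope}, with the shift coming from the observation that generators of $CW(H_\mu,\widehat L\to\widehat{L^\prime_0})$ have action below $A-B_0+\mu$ while those of $CW(H_\mu,\widehat L\to\widehat{L^\prime_T})$ have action below $A-B_T+\mu = A-B_0+\mu-\beta$. Your direct-limit bookkeeping with $\gamma_0=A-B_0$, $\gamma_T=A-B_T$ is just a slightly more explicit phrasing of that same argument.
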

\begin{proof}
    The proof amounts to comparing the action values corresponding to a Hamiltonian chords of a Hamiltonian with given slope at infinity, in the case of $L\to L^\prime_0$ and in case of $L\to L^\prime_T$. Let $H_\mu$ be an admissible Hamiltonian with slope $\mu$ at infinity. Then all the generators of $CF(H_\mu, \widehat{L}\to \widehat{L^\prime_0})$ have actions less then $A-B_0 + \mu$, while all generators of $CF(H_\mu, \widehat{L}\to \widehat{L^\prime_T})$ have actions less than
    \[A-B_T + \mu = A-B_0 + B_0 - B_T + \mu= A-B_0 +\mu - \beta.\]
    Here we have used that the action equals the difference of $f$ and $g$, plus the $y$-intercept of the radial Hamiltonian. By the correspondence between slope and action filtration, namely Proposition \ref{prop:actionAndSlope}, we deduce that $\mathcal{C}ont_{\{L^\prime_\tau\}_{0}^{T}}$ preserves the action filtration up to a shift by $-\beta$. 
\end{proof}

\subsection{Energy Bounds and \texorpdfstring{$C^0$}{C0} bounds.}\label{ssec:energyAndC0}
In this section we establish the two types of a priori bounds required for the compactness/breaking mechanism to hold in the case of  $\mathcal{C}ont_{\{L^\prime_\tau\}_{0}^{T}}$, for a given isotopy of exact Lagrangians as in (\ref{eqn:floerMovingBoundary}). We need two kinds of bounds: a $C^0$ bound, namely that all the Floer trajectories with moving boundary conditions are contained in a compact set, and an a priori bound for energy of such Floer trajectories. The next two subsections are dedicated to establishing these bounds.

\subsubsection{\texorpdfstring{$C^0$}{C0} Bound for Floer Solutions with Moving Boundary Conditions}
We prove the following proposition:
\begin{proposition}\label{prop:c0Cofinement}
    Given an admissible Hamiltonian $H$ and an asymptotically cylindrical almost complex structure $J$, all solutions of (\ref{eqn:floerMovingBoundary}) have images contained in the compact part $M\subset \widehat{M}$.
\end{proposition}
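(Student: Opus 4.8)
The plan is to run the maximum-principle (``no-escape'') argument familiar from wrapped Floer theory, in which the Hamiltonian is linear on the conical end, and to verify that the moving Lagrangian boundary condition does not spoil it.

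First I would isolate the two structural facts that make the argument work. Since $H$ is admissible, $H<0$ on $M$ and $H(r,x)=h(r)=\mu r+b$ on $[1,\infty)\times\Sigma$; on the cone the Hamiltonian chord equation for $H$ reduces to the Reeb equation, so for the non-characteristic slope $\mu$ (which is forced, since non-degeneracy of the chords is needed for $\mathcal{M}^0$ to be cut out transversally) there are no time-$1$ chords of $H$ from $\widehat L$ to $\widehat{L^\prime_0}$ or to $\widehat{L^\prime_T}$ in $\{r>1\}$, and hence the asymptotes $\gamma_\pm$ of any $u\in\mathcal{M}^0$ lie in $M=\{r\le 1\}$. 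Secondly, the isotopy $\{L^\prime_\tau\}$ is by hypothesis supported away from $\partial M$, so there is $\eps_0>0$ with $\widehat{L^\prime}_{\beta(s)}=\widehat{L^\prime_0}$ on $\widehat M\setminus M_{1-\eps_0}$ for every $s\in\mathbb{R}$; since $L^\prime_0$ is asymptotically conical, over the conical region $\{r\ge 1-\eps_0\}$ the moving boundary condition $u(s,1)\in\widehat{L^\prime}_{\beta(s)}$ coincides with the fixed conical condition $u(s,1)\in[1-\eps_0,\infty)\times\Lambda^\prime$, independent of $s$. Consequently it suffices to prove that $\operatorname{im}(u)$ does not meet $\{r>1\}$.

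The core step is the maximum principle applied to $\rho:=r\circ u$ on $\Omega:=u^{-1}(\{r\ge 1\})$ (perturbing $1$ to a nearby regular value of $\rho$ so that $\Omega$ is a manifold with corners). On $\Omega$ the Floer equation $\overline{\partial}_{J,H}u=0$ has $H=\mu r+b$ and $J$ cylindrical, and the boundary arcs of $\Omega$ lie either on $\widehat L\cap\{r\ge 1\}=[1,\infty)\times\Lambda$ or on $\widehat{L^\prime}_{\beta(s)}\cap\{r\ge 1\}=[1,\infty)\times\Lambda^\prime$. As in the fixed-boundary case, using $JX_{\alpha^{}_M}=r\partial_r$ and $dr|_{\ker\alpha^{}_M}=0$ one obtains a differential inequality of the form $\Delta\rho\ge 0$ on the interior of $\Omega$; and along the Lagrangian boundary arcs — where $\lambda$ restricts to zero because $\Lambda$ and $\Lambda^\prime$ are Legendrian — the outward normal derivative of $\rho$ is everywhere $\le 0$, so the Hopf lemma rules out a boundary maximum as well. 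Because $\gamma_\pm\subset\{r\le1\}$, for $|s|$ large $u(s,\cdot)$ stays in $\{r\le 1\}$, so $\overline\Omega$ is contained in a compact rectangle; a value of $\rho$ strictly larger than $1$ would then be attained at an interior point of $\Omega$ or on a Lagrangian boundary arc, and both are excluded. Hence $\rho\le 1$ on all of $\mathbb{R}\times[0,1]$, i.e.\ $u(\mathbb{R}\times[0,1])\subset M$. Equivalently, this is the Lagrangian no-escape lemma for the trivial cobordism $[1,\infty)\times\Sigma$ (compare \cite[Section 19.5]{ritter2013topological} and Lemma \ref{lem:lagNoEscape}), whose hypotheses hold precisely because the moving boundary is the fixed conical one there.

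The only genuine deviation from the textbook argument — and where I expect the bookkeeping to need care — is the boundary estimate for $\rho$ along the moving arc $u(s,1)\in\widehat{L^\prime}_{\beta(s)}$: a priori the $s$-dependence of this boundary condition could contribute an extra term to the computation of $\partial_\nu\rho$. The reduction above, that $\widehat{L^\prime}_{\beta(s)}$ is $s$-independent and conical over $\{r\ge 1-\eps_0\}$, is exactly what annihilates that term, making the estimate identical to the one for a fixed conical Lagrangian; the $t=0$ arc is fixed from the start. Finally I would note that the same argument applies, essentially word for word, to the other moving-boundary moduli spaces of Section \ref{ssec:continuationMovingBdry} and of the proof of Proposition \ref{prop:contAndContinuationCommute} (the solutions of \eqref{eqn:continuationMovingBoundary} and \eqref{eqn:chainhomotopyMovingBoundary}), since in all of them the Lagrangian isotopy is supported in the interior of $M$ and the Hamiltonian homotopies are through admissible Hamiltonians linear on the conical end.
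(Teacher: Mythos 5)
Your proposal is correct and follows essentially the same route as the paper: you use that the isotopy is supported away from $\partial M$ so that over the conical end the moving boundary condition coincides with the fixed conical one, and then invoke the no-escape/maximum-principle argument (Lemma \ref{lem:lagNoEscape}) for the linear Hamiltonian and cylindrical $J$ there. The only cosmetic difference is that you unpack the maximum principle for $\rho=r\circ u$ directly at the level $r=1$, whereas the paper applies the cited no-escape lemma to $V=M_{[1+\eps,\infty)}$ and lets $\eps\to 0$.
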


Before presenting the proof we recall the "no-escape lemma" from \cite{ritter2013topological}. We give a formulation specialized to the Floer strips appearing in our setting of asymptotically conical Lagrangians:

Let $(V, d\theta)$ be an exact symplectic manifold with nonempty boundary $\partial V$ of negative contact type: the Liouville vector field points strictly inwards.
Assume $J$ is cylindrical near $\partial V$.
Moreover, suppose that $H$ is a  Hamiltonian linear in the $r$-coordinate near $\partial V$.

Let $S\subset \mathbb{R}\times [0,1] \subset \mathbb{C}$ be a Riemann surface with boundary and corners, with the Riemann surface structure induced by the inclusion into $\mathbb{R}\times [0,1]$.
The complement of the corners inside the boundary is a disjoint union of circles and open intervals. Consider the collection of boundary pieces comprising the circles and the closures of the aforementioned intervals. Assume each piece is equipped with a label $b$ or $l$ such that every two pieces intersecting in a corner carry different labels. \footnote{The letters $l,b$ were chosen because we will require the pieces labeled $l$ to map to a Lagrangian and the pieces labeled $b$ to map to the boundary $\partial V$.} Denote by $\partial_b S$ the union of the pieces labeled $b$ and by $\partial_l S$ the union of the pieces labeled $l$.

Let $L$ be an exact Lagrangian (possibly disconnected), such that $\theta\vert^{}_L = df$ with $f\vert^{}_L = 0$ near $\partial L$.  
\begin{lemma}[{\cite[Lemma 19.6]{ritter2013topological}}]\label{lem:lagNoEscape}
Let $u \colon S \to V$ be a solution of the Floer equation $\partial_s u + J\left( \partial_t u - X_H\left(u\right) \right) = 0$ with $u\left(\partial_b S\right) \subset \partial V$ and $u\left(\partial_l S\right) \subset L$, then $u\left(S\right) \subset \partial V$. \qed
\end{lemma}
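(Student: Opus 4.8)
The plan is to run the classical ``no-escape'' energy estimate of Ritter in this setting. First I would recall the geometric energy $E(u) = \int_S |\partial_s u|^2_{g_J}\,ds\,dt \ge 0$, and rewrite it using the Floer equation and the identity $\omega - dH\wedge dt = d(\theta - H\,dt)$ as $E(u) = \int_S u^*(\omega - dH\wedge dt)$. Since $S$ is a compact Riemann surface with boundary and corners, Stokes' theorem gives
\[ E(u) = \int_{\partial S}\bigl(u^*\theta - (H\circ u)\,dt\bigr) = \int_{\partial_b S}\bigl(u^*\theta - (H\circ u)\,dt\bigr) + \int_{\partial_l S}\bigl(u^*\theta - (H\circ u)\,dt\bigr), \]
the finitely many corners contributing nothing. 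The whole proof then reduces to showing that each of the two boundary integrals is $\le 0$, so that $0 \le E(u) \le 0$ and hence $E(u)=0$.

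For the Lagrangian pieces $\partial_l S$: there $u$ lands in $L$, where $\theta|_L = df$, so $u^*\theta = d(f\circ u)$ along $\partial_l S$, and $\int_{\partial_l S} u^*\theta$ is a signed sum of values of $f\circ u$ at the endpoints of the $l$-arcs. These endpoints are precisely the corners of $S$, which also lie on $\partial_b S$ and hence map into $\partial V\cap L = \partial L$, where $f\equiv 0$ by hypothesis; so the sum vanishes. Since the $l$-arcs lie along $\{t=0\}\cup\{t=1\}$ (the part of $\partial S$ inherited from $\partial(\mathbb{R}\times[0,1])$), $dt$ vanishes on them and the Hamiltonian term drops as well. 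Thus $\partial_l S$ contributes nothing.

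The substantive step is the contact boundary $\partial_b S$. Working in a collar $[1,1+\varepsilon)\times\partial V$ with $\theta = r\alpha$ and $\partial V = \{r=1\}$, along $\partial_b S$ one has $u^*\theta = u^*\alpha$ and $H\circ u \equiv h(1)$ is constant by the $r$-linearity of $H$. Using the cylindrical condition $JX_\alpha = r\partial_r$ I would first compute $dr\circ J = r\alpha = \theta$, then substitute $\partial_s u = -J(\partial_t u - X_H)$ to express the derivative of $r\circ u$ in the direction normal to the level set $\partial_b S$ in terms of $\alpha(\partial_t u)$ and $h'(1)$. Because $\partial V$ is of \emph{negative} (concave) contact type -- the Liouville field points into $V$, so $r\circ u$ attains its minimum over $S$ along $\partial_b S$ -- this normal derivative has the sign that, after integrating $u^*\alpha - h(1)\,dt$ around $\partial_b S$ with its induced boundary orientation, forces the integral to be $\le 0$. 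This sign bookkeeping -- further complicated by the reversed convention $\omega(J\cdot,\cdot)>0$ used here -- is the one place where care is genuinely needed; everything else (absence of disc and sphere bubbling by exactness, the corner structure, Gromov compactness) is identical to the fixed-boundary case.

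Finally, $E(u)=0$ gives $\partial_s u \equiv 0$ and $\partial_t u = X_H(u)$. Since $H$ is $r$-linear near $\partial V$, $X_H$ is a multiple of the Reeb field $X_\alpha$, which is tangent to the level sets of $r$; hence $r\circ u$ is locally constant on $S$. On any component meeting $\partial_b S$ that constant equals the boundary value, so $u$ maps it into $\partial V$, while a component of the compact surface $S$ whose boundary lies entirely in $\partial_l S$ must be empty. Therefore $u(S)\subset\partial V$, as claimed. I expect Step three -- the sign of the $\partial_b S$ term -- to be the main obstacle.
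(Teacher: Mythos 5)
The paper does not actually prove this lemma: it is quoted (in a form specialized to Floer strips) from Ritter and used as a black box, so there is no internal proof to compare against. Your sketch is exactly the standard no-escape argument that Ritter gives: energy rewritten as a boundary integral via Stokes, vanishing of the Lagrangian contribution (exactness, $f\equiv 0$ near $\partial L$, and $dt\vert_{\partial_l S}=0$ since the $l$-pieces lie in $\mathbb{R}\times\{0,1\}$), a sign estimate on $\partial_b S$ coming from $dr\circ J=\theta$ together with the fact that $r\circ u$ attains its minimum along $\partial_b S$, and finally $E(u)=0$ forcing $du=X_H\otimes dt$ with $X_H$ proportional to the Reeb field, hence tangent to $\partial V$. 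The step you deferred does close up, but two concrete points for the bookkeeping: with the paper's conventions $\omega(J\cdot,\cdot)>0$ and $\omega(X_H,\cdot)=-dH$, the Stokes identity comes out as $E(u)=-\int_{\partial S}\bigl(u^*\theta-(H\circ u)\,dt\bigr)$, so the $\partial_b S$ contribution must be shown to be $\ge 0$ rather than $\le 0$ (and it is, since $u^*\theta(v)=d(r\circ u)(jv)+r h'(r)\,dt(v)$ with $jv$ the inward normal at a minimum of $r\circ u$); and the leftover constant term $\bigl(r h'(r)-h(r)\bigr)\int_{\partial_b S}dt$ is killed by the observation that $dt$ is exact and vanishes on $\partial_l S$, so $\int_{\partial_b S}dt=0$, an ingredient your outline does not mention but will need.
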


We now turn to the proof of the proposition.
\begin{proof}[Proof of Proposition \ref{prop:c0Cofinement}]
    We let $V$ denote the non-compact symplectic manifold $M_{[1+\eps,\infty)}$, namely the complement of the interior of $M_{1+\eps}$ in $\widehat{M}$, and let $u\colon \mathbb{R}\times[0,1] \to \widehat{M}$ be a solution to Equations (\ref{eqn:floerMovingBoundary}).
    
    Denote by $S \subset \mathbb{R} \times [0,1]$ the preimage $u^{-1}\left(V\right)$. The boundary of the symplectic manifold $V$ has negative contact type, and the almost complex structure $J$ is conical near $\partial V$. Moreover, since $H$ is admissible $H$ is linear near $\partial V$.
    
    Note that when restricted to $V$, the isotopy $\widehat{L^\prime_t}$ is constant and equals $\widehat{L^\prime_0}$. Hence $u\vert^{}_S$ solves the usual Floer equation with non-moving Lagrangian boundary conditions.
    
    Moreover, since $\widehat{L}\vert^{}_V$ and $\widehat{L^\prime_0}\vert^{}_V$ are conical, they are both exact with a primitive function $f=0$. Hence, the "no-escape Lemma" (Lemma \ref{lem:lagNoEscape} above), holds and $u(S) \subset \partial V$, therefore $\operatorname{im} u \subset M_{1+\eps}$.  Since this is true for all $\eps >0$, it follows that $\operatorname{im} u \subset M$.
\end{proof}

\subsubsection{Energy Bound}
Fix an isotopy of exact Lagrangians, $\left\lbrace L^\prime_\tau\right\rbrace^{}_{0\le\tau\le T}$, with primitives $g_\tau \colon L^\prime_\tau \to \mathbb{R}$ satisfying $dg_\tau = \lambda\vert^{}_{L^\prime_\tau}$, and reparamterize it using a choice of a monotone function $\beta(\tau)$, mapping $[0,1]$ onto $[0,T]$ to obtain $\lbrace L^\prime_{\beta(\tau)} \rbrace_{\tau \in \mathbb{R}}$, as described in subsection \ref{ssec:continuationMovingBdry}.
 
The purpose of this section is to prove the following proposition:
\begin{proposition}
    For a given an admissible Hamiltonian $H$, and an asymptotically cylindrical almost complex structure $J$,
    there exists a constant $C$ independent of $u$ such that for every solution $u$ of Equation (\ref{eqn:floerMovingBoundary}), with moving boundary conditions, the following energy bound holds:
    \[E(u) \le \mathcal{A}^{L \to L^\prime_T}_H\left(\gamma_+\right) - \mathcal{A}^{L \to L^\prime_0}_H\left(\gamma_+\right) + C.\]
    In particular there is an a priori energy bound for solutions of (\ref{eqn:floerMovingBoundary}).
\end{proposition}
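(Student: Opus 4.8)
The plan is to establish an exact energy--action identity for solutions $u$ of \eqref{eqn:floerMovingBoundary} --- the natural analogue of the identity underlying ordinary continuation maps --- in which the moving boundary condition contributes one extra term, localized to the compact region where the Lagrangian isotopy is non-stationary, and then to bound that term in $L^\infty$. First I would reduce $E(u)$ to boundary data in the usual way: from the Floer equation, $J$-compatibility, and $\omega(X_H,\cdot) = -dH$ one has pointwise $|\partial_s u|^2 = -u^*\omega(\partial_s,\partial_t) + \partial_s(H\circ u)$, hence $E(u) = -\int_{\mathbb R\times[0,1]} u^*\omega + \int_0^1\bigl(H(\gamma_+) - H(\gamma_-)\bigr)\,dt$. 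Here Proposition~\ref{prop:c0Cofinement} is essential: since $\operatorname{im} u \subset M$, where $\omega = d\lambda$, Stokes' theorem on the strip is legitimate and produces the boundary integrals along $\{t=0\}$, along $\{t=1\}$, and at the asymptotic ends $s\to\pm\infty$ (the last handled by the standard exponential convergence to $\gamma_\pm$). The $t=0$ integral equals $f(\gamma_+(0)) - f(\gamma_-(0))$ because $\lambda|_{\widehat L} = df$, and together with the asymptotic integrals and the $H$-terms it will reassemble into action values; the entire novelty sits in the $t=1$ integral $I := \int_{\mathbb R}\lambda(\partial_s u(s,1))\,ds$, where the boundary condition $u(s,1)\in\widehat{L^\prime_{\beta(s)}}$ is moving.

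To evaluate $I$ I would invoke the Hamiltonian isotopy $\Psi_s$ of subsection~\ref{ssec:continuationMovingBdry} (with generator $G_s$, arranged so that $\Psi_s = \operatorname{id}$, $G_s\equiv 0$ for $s\le 0$, $\Psi_s = \Psi_1$, $G_s\equiv 0$ for $s\ge 1$, and $G_s$ compactly supported in $M$ for each $s$), and factor $u(s,1) = \Psi_s(p(s))$ with $p(s)\in\widehat{L^\prime_0}$. Differentiating along the isotopy gives $\Psi_s^*\lambda = \lambda + dR_s$ with $R_s := \int_0^s (\lambda(X_{G_\tau}) - G_\tau)\circ\Psi_\tau\,d\tau$; substituting this, and $\partial_s u(s,1) = X_{G_s}(u(s,1)) + d\Psi_s(\dot p(s))$, into $\lambda(\partial_s u(s,1))$, the $\lambda(X_{G_s})$-terms telescope and one is left with $\lambda(\partial_s u(s,1)) = \tfrac{d}{ds}\bigl[(g_0 + R_s)(p(s))\bigr] + G_s(u(s,1))$. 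Integrating over $s$, and using the primitive bookkeeping $g_T\circ\Psi_1 = g_0 + R_1 + (B_T - B_0)$ on $\widehat{L^\prime_0}$ --- the additive constant pinned down near $\partial M$, where $\Psi_1 = \operatorname{id}$, $R_1 \equiv 0$ and $g_\tau\equiv B_\tau$ --- yields $I = g_T(\gamma_+(1)) - g_0(\gamma_-(1)) - (B_T - B_0) + \int_{\mathbb R}G_s(u(s,1))\,ds$.

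Reassembling, the $f$-, $g$-, $\gamma^*\lambda$- and $H$-terms group into $\mathcal{A}^{L\to L^\prime_0}_H(\gamma_-)$ and $\mathcal{A}^{L\to L^\prime_T}_H(\gamma_+)$ and the $g_T(\gamma_+(1))$, $g_0(\gamma_-(1))$ terms cancel, giving the exact identity
\[
E(u) = \mathcal{A}^{L\to L^\prime_0}_H(\gamma_-) - \mathcal{A}^{L\to L^\prime_T}_H(\gamma_+) - (B_T - B_0) + \int_{\mathbb R}G_s(u(s,1))\,ds,
\]
i.e.\ the familiar continuation-map energy identity with the moving-boundary correction $\int_{\mathbb R}G_s(u(s,1))\,ds$ in place of the usual monotonicity term. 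Since $G_s$ vanishes for $s\notin[0,1]$ and is compactly supported in the compact set $M$, and $u(s,1)\in M$ by Proposition~\ref{prop:c0Cofinement}, this correction is bounded by $C_0 := \max_{[0,1]\times M}|G_s|$; hence with $C := |B_T - B_0| + C_0$, which depends only on the fixed data $M, L, \{L^\prime_\tau\}, f, \{g_\tau\}, H, J$ and not on $u$, the identity yields the asserted a priori bound on $E(u)$ (the right-hand side ranging over only finitely many values once $H$ is fixed). The step I expect to be the main obstacle is the evaluation of $I$: managing the interplay of $\Psi_s$, the primitive $R_s$, and the additive constants $B_0, B_T$ precisely, since this is the one genuinely new ingredient relative to the standard continuation-map estimate --- the Stokes reduction, the reassembly into action values, and the final $L^\infty$ bound all follow the familiar pattern.
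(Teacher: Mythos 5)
Your proposal is correct in substance and delivers the a priori energy bound, but it takes a genuinely different route from the paper. The paper follows Oh's trick: it attaches a triangle $\Delta$ to the strip $\Gamma$, extends $u$ over $\Delta$ by flowing $u(s,1)$ backwards under $\left(\Phi^{t-1}_{G_{t-1}}\right)^{-1}$, and applies Stokes to $\Gamma\cup\Delta$, so that every boundary piece lies on one of the fixed Lagrangians $\widehat{L^\prime_0}$, $\widehat{L^\prime_T}$ or on a Hamiltonian chord segment $B_0$; the moving-boundary corrections then appear as $g_0\left(\Psi^1(u(1,1))\right)$, $g_T\left(u(1,1)\right)$, $\int_{B_0}w^*\lambda$ and $\int_\Delta w^*\omega$, bounded respectively by $\max g_0$, $\max g_T$, the constant $C_3$ and $\operatorname{osc} G$. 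You instead evaluate the $t=1$ boundary integral directly by factoring $u(s,1)=\Psi_s(p(s))$ with $p(s)\in\widehat{L^\prime_0}$ and using $\Psi_s^*\lambda=\lambda+dR_s$; your telescoping identity $\lambda(\partial_s u(s,1))=\tfrac{d}{ds}\bigl[(g_0+R_s)(p(s))\bigr]+G_s(u(s,1))$ is correct with the paper's convention $\iota_{X_G}\omega=-dG$, and your correction terms $B_T-B_0$ and $\int_{\mathbb R}G_s(u(s,1))\,ds$ are precisely the integrated counterparts of the paper's $B_0$- and $\Delta$-terms. Both routes use Proposition \ref{prop:c0Cofinement} in the same essential way, and both produce a constant depending only on the fixed data; your constant $|B_T-B_0|+\max|G|$ is arguably cleaner, while the paper's triangle argument avoids differentiating the factorization $p(s)$ and keeps everything as a single Stokes computation. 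Two minor caveats: the constant in $g_T\circ\Psi_1=g_0+R_1+(B_T-B_0)$ is pinned down near $\partial M$ only on components of $\widehat{L^\prime_0}$ reaching the conical end (on any other component you still get a constant depending only on the data, which is all the bound requires); and your final inequality has the shape $E(u)\le\mathcal{A}^{L\to L^\prime_0}_H(\gamma_-)-\mathcal{A}^{L\to L^\prime_T}_H(\gamma_+)+C$ rather than the displayed $\mathcal{A}^{L\to L^\prime_T}_H(\gamma_+)-\mathcal{A}^{L\to L^\prime_0}_H(\gamma_+)+C$; the latter, as printed (both actions at $\gamma_+$, one of them formed with the primitive of the other Lagrangian), appears to be a typographical slip in the paper, and what is actually needed downstream — a bound on $E(u)$ uniform over the finitely many asymptotic chords of the fixed $H$ — is exactly what your identity provides.
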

\begin{proof}
    The proof is based on ideas appearing in \cite{oh1993floer}, adapted to the setting of exact Lagrangians in Liouville domains.
    
    Let $u\colon \mathbb{R}\times[0,1] \to \widehat{M}$ be a solution of (\ref{eqn:floerMovingBoundary}), namely:
    \begin{equation*}
        \begin{aligned}
            &\overline{\partial}^{}_{J,H}u := \partial_s u + J\left( \partial_t u - X_H\left(u\right) \right) = 0, \\
            &\lim_{s\to\pm \infty} u(s,t) = \gamma_{\pm}(t), \\
            &u(s,0)\in \widehat{L} \text{ and }  u(s,t)\in \widehat{L}^\prime_{\beta(s)}.
        \end{aligned}  
    \end{equation*}
    We denote by $\Gamma$ the Riemann surface with boundary: $\mathbb{R}\times [0,1]$.
    
    Recall the definition of the energy of a Floer solution,
    \[
        E\left( u \right) = \intop_{\Gamma} \left\Vert \partial_s u \right\Vert^2_J ds\,dt = \intop_{\Gamma} \omega\left(J\partial_s u, \partial_t u\right) ds\,dt.
    \]
    Since $u$ solves the Floer equation we have that
    \begin{multline*}
        \intop_{\Gamma} \omega\left(J\partial_s u, \partial_s u\right) dsdt = \intop_{\Gamma} \omega\left(\partial_t u - X_H, \partial_s u\right) dsdt = \\ =\intop_0^1\intop_{-\infty}^{\infty} \frac{\partial}{\partial s}
         H(u) ds dt - \intop_\Gamma u^*\omega,  
    \end{multline*}
    hence 
    \[
        E\left( u \right) = \intop_0^1 H\left( \gamma_+\left(t\right)\right) dt - \intop_0^1 H\left( \gamma_-\left(t\right)\right) dt - \int_\Gamma u^*\omega.
    \]

    We now turn to estimating $\int_\Gamma u^*\omega$. To this end we consider the triangle
    \[ \Delta = \left\lbrace \left(s,t\right) \in \mathbb{R}^2 \,\middle\vert\, 0\le s \le 1,\; 1\le t \le s + 1\right\rbrace,
    \]
    and we extend the map $u$ to a map $w\colon \Gamma \cup \Delta \to \widehat{M}$ defined by
    \[ w(s,t)=
    \begin{cases}
       u(s,t) & (s,t) \in \Sigma \\
       \left(\Phi^{t-1}_{G_{t-1}}\right)^{-1} \! \left(u\left(s,1\right)\right) & (s,t) \in \Delta,
    \end{cases}
    \]
    where $G_\tau$ is the Hamiltonian realizing the isotopy $\bigl\lbrace\widehat{L}^\prime_{\beta(\tau)}\bigr\rbrace_{0\le \tau \le 1}$, as 
    \[
    \widehat{L}^\prime_{\beta(\tau)} = \Phi^\tau_{G_\tau}\left(\widehat{L}^\prime_{\beta(0)}\right).
    \]
    Here $\Phi^\tau_{G_\tau}$ is the Hamiltonian flow generated by $G_\tau$ and $\left(\Phi^\tau_{G_\tau}\right)^{-1}$ is its inverse. 
    
    For brevity we denote 
    \begin{align*}
        \Psi^{t-1}(x) &=\left(\Phi^{t-1}_{G_{t-1}}\right)^{-1} \! \left(x\right), \\
        v(s,t) &= \Psi^{t-1}\left(u\left(s,1\right)\right).
    \end{align*}
    Note that 
    \begin{align*}
        \frac{\partial v}{\partial s} &= D\Psi^{t-1}\cdot\frac{\partial u}{\partial s}(s,1), \\
        \frac{\partial v}{\partial t} &= -X_{G_{t-1}}(v(s,t)),
    \end{align*}
    and therefore
    \[
        \omega\left(\frac{\partial v}{\partial s},\frac{\partial v}{\partial t}\right) = \frac{\partial}{\partial s} G_{t-1}\circ\Psi^{t-1}(v(s,1)).
    \]

We decompose the boundary of $\Gamma \cup \Delta$ into four parts: 
\begin{itemize}
    \item $A = \mathbb{R}\times\lbrace 0 \rbrace$,
    \item $B_- = \left((-\infty,0] \times \lbrace 0\rbrace\right)\cup \left\lbrace (s,t)\in \mathbb{R}^2 \,\middle\vert\, 0\le s \le 1, t=s-1 \right\rbrace$,
    \item $B_0 = \lbrace 1 \rbrace \times [1,2]$,
    \item and $B_+ = [1,\infty) \times \lbrace 0\rbrace$.
\end{itemize}
Note that $w$ maps $B_-$ to $\widehat{L}^\prime_0$ and $B_+$ to $\widehat{L}^\prime_{\beta(1)}$ = $\widehat{L}^\prime_T$.
See Figure \ref{fig:gammaDelta}.

\begin{figure}[H]
	\centering
	\includegraphics[scale=0.75]{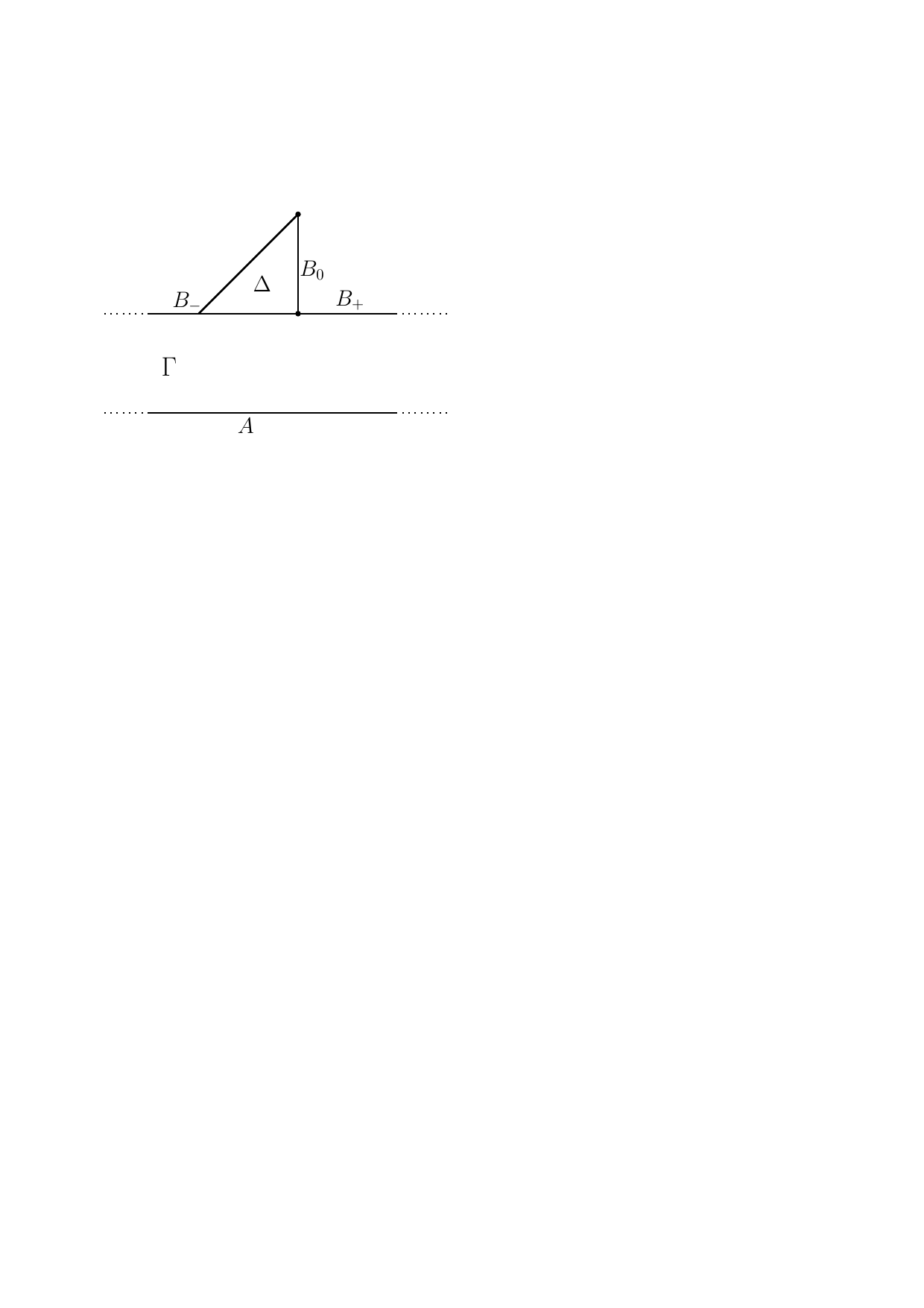}
	\caption{\small{The boundary decomposition of $\Gamma\cup\Delta$.
	}}
	\label{fig:gammaDelta}
\end{figure}

By an application of Stokes theorem, we get
\begin{align*}
    \intop_{\Gamma\cup \Delta} w^*\omega &= \intop_{\gamma_-} \lambda + \intop_{B_-} w^*\lambda + \intop_{B_0} w^*\lambda + \intop_{B_+} w^*\lambda - \intop_{\gamma_+} \lambda + \intop_A w^*\lambda \\
    &= \intop_{\gamma_-} \lambda + \intop_{B_-} w^*\lambda + \intop_{B_0} w^*\lambda + \intop_{B_+} u^*\lambda - \intop_{\gamma_+} \lambda + \intop_A u^*\lambda \\
    &= \intop_{\gamma_-} \lambda + \intop_{B_-} w^*dg_{0} + \intop_{B_0} w^*\lambda + \intop_{B_+} u^*dg^{}_{T} - \intop_{\gamma_+} \lambda + \intop_A u^*df \\
    &=\intop_{\gamma_-} \lambda + g_{0}\left(\Psi^{1}\left(u(1,1)\right)\right) - g_{0}\left(\gamma_-(1)\right) + \intop_{B_0} w^*\lambda + g^{}_{T}\left(\gamma_+(1)\right)  \\ &\phantom{- } - g^{}_{T}\left(u(1,1)\right) - \intop_{\gamma_+} \lambda + f\left(\gamma_-(0)\right) - f\left(\gamma_+(0)\right) \\
    &= \intop_{\gamma_-} \lambda + f\left(\gamma_-(0)\right) - g_{0}\left(\gamma_-(1)\right) - \intop_{\gamma_+} \lambda - f\left(\gamma_+(0)\right) + g^{}_{T}\left(\gamma_+(1)\right) + \\&\phantom{+ } + g_{0}\left(\Psi^{1}\left(u(1,1)\right)\right) + \intop_{B_0} w^*\lambda - g^{}_{T}\left(u(1,1)\right).
\end{align*}

The first six terms in the formula after the last equality also appear in the formula for the actions of $\gamma_-$ and $\gamma_+$, so we are left with the goal of estimating the last three terms.

Since all the generators for the Floer complex lie in $M$, we deduce by the $C^0$-bound, Proposition \ref{prop:c0Cofinement}, that all the Floer solutions are contained in $M$ as well.

Since $\Psi^{1}\left(u(1,1)\right) \in L^\prime_0$, we have that 
\[
    \left\vert g_{0}\left(\Psi^{1}\left(u(1,1)\right)\right)\right\vert \le \max_{L^\prime_0} g_0 =: C_1.
\]
Also, $u(1,1)\in L^\prime_T$, hence
\[
    \left\vert g^{}_{T}\left(u(1,1)\right)  \right\vert \le \max_{L^\prime_T} g^{}_T =: C_2.
\]
The restriction $w\vert^{}_{B_0}$ is a paramterization of a Hamiltonian chord of $G_{t-1}$ for $1\le t \le 2$, hence
\[
    \left\vert \intop_{B_0} w^*\lambda \right\vert \le \max_{x\in L^\prime_0} \intop_0^1 \lambda\left(\dot\Phi^t_{X_{G_t}}(x)\right)=: C_3.
\]

Back to the expression for the energy, by subtracting and adding the integral over $\Delta$, we get
\begin{align*}
E\left( u \right) &= \intop_0^1 H\left( \gamma_+\left(t\right)\right) dt - \intop_0^1 H\left( \gamma_-\left(t\right)\right) dt - \intop_{\Gamma\cup\Delta} u^*\omega + \intop_{\Delta} w^*\omega \\
&=\intop_0^1 H\left( \gamma_+\left(t\right)\right) dt - \intop_0^1 H\left( \gamma_-\left(t\right)\right) dt - \intop_{\gamma_-} \lambda - f\left(\gamma_-(0)\right) + g_{0}\left(\gamma_-(1)\right)\\
&\phantom{+ } + \intop_{\gamma_+} \lambda + f\left(\gamma_+(0)\right) - g_{T}\left(\gamma_+(1)\right) \\
&\phantom{- } -g_{0}\left(\Psi^{1}\left(u(1,1)\right)\right) - \intop_{B_0} w^*\lambda + g_{T}\left(u(1,1)\right) + \intop_{\Delta} w^*\omega\\
&=\mathcal{A}^{L \to L^\prime_T}_H\left(\gamma_+\right) - \mathcal{A}^{L \to L^\prime_0}_H\left(\gamma_+\right) - g_{0}\left(\Psi^{1}\left(u(1,1)\right)\right) - \intop_{B_0} w^*\lambda + g_{T}\left(u(1,1)\right) + \intop_{\Delta} w^*\omega. \\
\end{align*}
Of the last four terms the first three were already estimated and we are left with estimating $\intop_{\Delta} w^*\omega$.
\begin{align*}
    \intop_{\Delta} w^*\omega &= \intop_{\Delta} v^*\omega \\
    &= \intop_0^1 \intop_1^{1+s} \omega\left(\frac{\partial v}{\partial s},\frac{\partial v}{\partial t}\right)dt\,ds \\ 
    &=\intop_1^2 \intop_{t-1}^{1} \frac{\partial}{\partial s} G_{t-1}\circ\Psi^{t-1}(v(s,1))\,ds\,dt \\
    &= \intop_1^2 \left(G_{t-1}\circ\Psi^{t-1}(u(1,1)) - G_{t-1}\circ\Psi^{t-1}(u(t-1,1))\right) dt.
\end{align*}
Hence
\[
    \left\vert \intop_{\Delta} w^*\omega \right\vert \le \max_{M\times [0,1]}G_t(x) - \min_{M\times [0,1]}G_t(x) = \operatorname*{osc}\limits_{M\times [0,1]}G_t(x).
\]

To conclude 
\[E(u) \le \mathcal{A}^{L \to L^\prime_T}_H\left(\gamma_+\right) - \mathcal{A}^{L \to L^\prime_0}_H\left(\gamma_+\right) + \max_{L^\prime_0} g_0 + \max_{L^\prime_T} g_T  + C_3 + \operatorname*{osc}\limits_{M\times [0,1]}G_t(x),\]
which is bounded by the compactness of $M$.
\end{proof}

\section{Proof of Theorem \ref{thm:mainTheorem}}\label{sec:mainTheoremProof}

We will obtain Theorem \ref{thm:mainTheorem} from the following special case, where the parameter $b$ is normalized to $1$:
\begin{theorem}\label{thm:specialCaseMainTheorem}
    In the setting of Theorem \ref{thm:mainTheorem}, assume that the barcode of \\ $\wfhBold{M}{L}{L^\prime}$ contains a bar of the form $\left (\mu, C\mu\right]$ with $\mu > 0$, $C>1$ or a bar of the form $\left(\mu, \infty\right)$, In the latter case we set $C=\infty$. Then, for every $0<\eps <1$ with $\frac {1}{\varepsilon} \le C$, such that $L$ and $L^\prime$ are conical in $M_{[\eps,1]}$, we have that:
                \[
                \operatorname{pb}_{\widehat {M}}^+\left(\widehat{L}\cap M_{[\eps,1]},\widehat{L^\prime}\cap M_{[\eps,1]}, \partial M_\eps, \partial M \right) \ge \frac {1}{\mu\left(1-\eps\right)}.
                \]
\end{theorem}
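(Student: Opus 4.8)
The plan is to work with the equivalent description of $\operatorname{pb}^+$ via the class $\mathcal{G}$ and to argue by contradiction, splitting the proof into an elementary ``trapping'' estimate for the Poisson bracket and a Floer-theoretic existence statement for a short Hamiltonian chord. Write $X_0:=\widehat{L}\cap M_{[\eps,1]}$, $X_1:=\widehat{L^\prime}\cap M_{[\eps,1]}$, $Y_0:=\partial M_\eps$, $Y_1:=\partial M$; since $L,L^\prime$ are conical in $M_{[\eps,1]}$, the sets $X_0,X_1$ are precisely the conical pieces $[\eps,1]\times\Lambda$ and $[\eps,1]\times\Lambda^\prime$, and in particular $\widehat L\cap M_{[\eps,1]}=L\cap M_{[\eps,1]}$. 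By the equivalent description of $\operatorname{pb}^+$ it suffices to prove $\max_{\widehat M}\{F,G\}\ge\tfrac1{\mu(1-\eps)}$ for every $(F,G)\in\mathcal{G}_{\widehat M}(X_0,X_1,Y_0,Y_1)$. Suppose not: fix such a pair with $\max_{\widehat M}\{F,G\}=:c_0<\tfrac1{\mu(1-\eps)}$, so that $1/c_0>\mu(1-\eps)$.

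For the trapping estimate, note that $G$ is locally constant near $\partial M_\eps$ and near $\partial M$, so its flow $\phi_G^t$ fixes collars of these hypersurfaces and hence preserves $M_{[\eps,1]}$. Since $F\equiv 0$ on a neighbourhood of $X_0$, for $p\in X_0$ we have $\tfrac{d}{dt}F(\phi_G^t p)=\{F,G\}(\phi_G^t p)\le c_0$ (for the orientation of the bracket for which this is an upper bound; if necessary one first replaces the quadruple by $(Y_1,Y_0,X_0,X_1)$ using the anti-symmetry of $\operatorname{pb}^+$), whence $F(\phi_G^t p)\le c_0 t<1$ for all $t\le\mu(1-\eps)$. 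As $F\ge 1$ on $X_1$, this shows $\phi_G^t(X_0)\cap X_1=\emptyset$ for every $t\le\mu(1-\eps)$; that is, the Hamiltonian flow of $G$ admits no chord from $X_0$ to $X_1$ of time-length $\le\mu(1-\eps)$.

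It therefore remains to show that, on the contrary, $\phi_G$ \emph{does} admit such a chord. Here I would argue Floer-theoretically. Replace $G$ by $G_0$ agreeing with $G$ on $M_{[\eps,1]}$ but vanishing identically on $M_\eps$ (possible since $G\equiv 0$ near $\partial M_\eps$); as $\phi_{G_0}$ fixes $M_\eps$ pointwise, preserves $M_{[\eps,1]}$, and fixes a collar of $\partial M$, any $\phi_{G_0}$-chord between $\widehat L$ and $\widehat{L^\prime}$ confined to $M$ is either constant — impossible as $\widehat L\cap\widehat{L^\prime}=\emptyset$ — or contained in $M_{[\eps,1]}$, hence a $\phi_G$-chord from $X_0$ to $X_1$. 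The existence of a chord of length $\le\mu(1-\eps)$ I would extract from the non-triviality, in the relevant action window, of wrapped Floer data: the no-escape lemma (Lemma~\ref{lem:lagNoEscape}) confines the small-action Floer solutions to $M_{[\eps,1]}$; the Viterbo transfer (Lemma~\ref{lem:viterboRestriction}) identifies $HW^{\mu}(M_\eps,L\cap M_\eps\to L^\prime\cap M_\eps)$ with $HW^{\mu/\eps}(M,L\to L^\prime)$ and realises the persistence map $\iota_{\mu\to\mu/\eps}$; and the hypothesis $1/\eps\le C$ gives $\mu/\eps\le C\mu$, so by Lemma~\ref{lem:pmNonZeroElement} the class born at the left endpoint $\mu$ of the bar $(\mu,C\mu]$ survives this transfer and is not hit from any level $\le\mu$. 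Since $G_0$ rises from $0$ to $1$ across the collar $M_{[\eps,1]}$ of radial width $1-\eps$, a mean-value estimate shows that running its flow for time $\mu(1-\eps)$ produces a ``slope-$\mu$ amount of wrapping'', which after the Viterbo identification lands precisely in the action range where the transferred bar is alive; running this, through the isotopy–continuation machinery of Section~\ref{ssec:continuationMovingBdry} (using the $C^0$- and energy-bounds of Section~\ref{ssec:energyAndC0} and Proposition~\ref{prop:movingBdryContPerModMorphism}), against the survival of the robust class forces a $\phi_{G_0}$-chord between $\widehat L$ and $\widehat{L^\prime}$ of time-length $\le\mu(1-\eps)$. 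Combined with the previous paragraph this contradicts $c_0<\tfrac1{\mu(1-\eps)}$, so $\operatorname{pb}^+_{\widehat M}(X_0,X_1,Y_0,Y_1)\ge\tfrac1{\mu(1-\eps)}$.

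I expect the third step to be the main obstacle: making precise the passage from ``the time-length of a $\phi_G$-chord'', which a priori depends on the non-radial shape of $G$, to the slope/action filtration on wrapped Floer homology, so that the collar width $1-\eps$ enters with exactly the right exponent; and checking that the localization via Lemma~\ref{lem:lagNoEscape} and the action bookkeeping through the Viterbo rescaling and the isotopy–continuation morphism are mutually compatible, so that the survival of the robust class of Lemma~\ref{lem:pmNonZeroElement} genuinely detects the chord. (When $C=\infty$ the condition $1/\eps\le C$ is vacuous and the same scheme applies verbatim.)
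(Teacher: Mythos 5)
Your reduction is sound as far as it goes: passing to $\mathcal{G}_{\widehat M}$ and the trapping estimate correctly show that a pair $(F,G)$ with $\max_{\widehat M}\{F,G\}=c_0$ admits no $\phi_G$-chord from $X_0$ to $X_1$ of time-length $<1/c_0$. But this only reduces the theorem to the claim that the flow of $G$ \emph{must} have such a chord in time $\le\mu(1-\eps)$ --- i.e.\ to the interlinking statement itself, which in the paper's logic is a \emph{corollary} of the $\operatorname{pb}^+$ bound (Theorem \ref{thm:pb4AndInterlinking}), not an ingredient of its proof. Your third step, where the entire content now lives, is not an argument: ``a mean-value estimate shows that running its flow for time $\mu(1-\eps)$ produces a slope-$\mu$ amount of wrapping'' specifies no construction. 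Note that $G_0$ is not an admissible Hamiltonian (it is bounded and $\equiv 1$ at infinity), so it does not interact directly with the slope/action filtration. To realize your idea you would have to argue by contradiction (assume no chord of time $\le T$ for some $T>\mu(1-\eps)$), convert the flow into the exact Lagrangian isotopy $L^\prime_t=\phi_{G_0}^{\mp t}(\widehat{L^\prime})$, check it is conical near $\partial M_\eps$ and $\partial M$, track the primitives $g_t$ (they shift by exactly $t$ near $\partial M$ and are unchanged near $\partial M_\eps$ --- this is precisely where the width $1-\eps$ and the shift $-T$ enter), observe that disjointness of $L^\prime_t$ from $\widehat L$, which the machinery of Subsections \ref{ssec:continuationMovingBdry}--\ref{ssec:energyAndC0} requires, is exactly the no-chord hypothesis, and then reproduce the factorization of $\iota_{(\mu+\delta)\to\frac{1}{\eps}(\mu+\delta)}$ through $\iota_{\frac{1}{\eps}(\mu+\delta-T)\to\frac{1}{\eps}(\mu+\delta)}$ (Propositions \ref{prop:conicalCommDiagram} and \ref{prop:iotaFactorization}) and play it against Lemma \ref{lem:pmNonZeroElement}. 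None of this bookkeeping appears in your text, and you yourself flag it as the main obstacle; as written, the proof has a genuine gap at its central step.

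For contrast, the paper never proves chord existence for the given $G$. It feeds the admissible pair into the deformation $\omega_\tau=\omega+\tau\, dF\wedge dG$, converts it by Moser's theorem into an exact Lagrangian isotopy with controlled primitives (Proposition \ref{prop:lagrangianIsotopyProperties}), uses the persistence factorization plus Lemma \ref{lem:pmNonZeroElement} to bound the time of existence of that isotopy by $T\le(1-\eps)\mu$, and concludes with Lemma \ref{lem:pb4DeformationBound}; interlinking is deduced afterwards from the $\operatorname{pb}^+$ bound. Your route, if completed, would be a genuinely different (and per-pair stronger) direct proof of the chord statement, but two further points would still need repair: the boundary case $\frac{1}{\eps}=C$ with $C$ finite is not covered (your use of the bar requires $\frac{1}{\eps}(\mu+\delta)\le C\mu$ for some $\delta>0$, which fails there; the paper handles it by a separate semi-continuity approximation in $\eps$), and the sign of the bracket in the trapping estimate should be pinned down by an explicit convention rather than an ``if necessary'' appeal to anti-symmetry, since $\max\{F,G\}$ bounds $\{F,G\}$ only from above.
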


We first prove that Theorem \ref{thm:specialCaseMainTheorem} implies Theorem \ref{thm:mainTheorem}.
\begin{proof}[Proof of Theorem \ref{thm:mainTheorem}]

Denoting by $\varphi$ the flow of the Liouville vector field, we note that for all $a>0$ we have that $M_a = \varphi^{\log a}\left(M\right)$. Since $M_b$ is by itself a Liouville domain we can apply the same construction to it, noting that 
\[
\left(M_b\right)_a = \varphi^{\log a}\left(M_b\right) = \varphi^{\log a}\left(\varphi^{\log b}\left(M\right)\right) = \varphi^{\log a+\log b}\left(M\right) =  \varphi^{\log ab}\left(M\right)=M_{ab}.
\]

Thus, to obtain Theorem \ref{thm:mainTheorem} from Theorem \ref{thm:specialCaseMainTheorem}, we first note that since $\left(M_b\right)_{\frac{1}{b}} = M$, Lemma \ref{lem:viterboRestriction}, Item 1, implies that for all $\mu$:
\[\operatorname{HW}^\mu\left(M,L \to L^\prime \right) \cong \operatorname{HW}^{b\mu}\left(M_b,\,\widehat{L}\cap M_b \to \widehat{L^\prime} \cap M_b\right),
\]
hence, a bar of the form $(\mu, C\mu]$ in $\wfhBold{M}{L}{L^\prime}$, gives rise to a bar of the form $\left(b\mu, Cb\mu\right]$ in $\wfhBold{M_b}{\widehat{L}\cap M_b}{\widehat{L^\prime} \cap M_b}$.

Now, applying Theorem \ref{thm:specialCaseMainTheorem} to $M_b$, $\widehat{L}\cap M_b$ and $\widehat{L^\prime} \cap M_b$, with $\eps=\frac{a}{b}$, yields 
\[
    \operatorname{pb}_{\widehat {M}}^+\left(\widehat{L} \cap \left(M_b\right)_{[\frac{a}{b},1]},\widehat{L^\prime} \cap \left(M_b\right)_{[\frac{a}{b},1]}, \partial \left(M_b\right)_{\frac{a}{b}}, \partial M_b \right) \ge \frac {1}{b\mu\left(1-\frac{a}{b}\right)}=\frac {1}{\mu\left(b-a\right)}.
\]
By the construction explained above, 
\begin{multline*}
    \operatorname{pb}_{\widehat {M}}^+\left(\widehat{L} \cap \left(M_b\right)_{[\frac{a}{b},1]},\widehat{L^\prime} \cap \left(M_b\right)_{[\frac{a}{b},1]}, \partial \left(M_b\right)_{\frac{a}{b}},\, \partial M_b \right) = \\ = \operatorname{pb}_{\widehat {M}}^+\left(\widehat{L} \cap M_{[a,b]},\,  \widehat{L^\prime} \cap M_{[a,b]},\,  \partial M_a,\, \partial M_b \right),
\end{multline*}
and this concludes the proof of Item 1.

We deduce Item 2 in the theorem from Item 1 and the semi-continuity of $\operatorname{pb}^+$. Since in the limit $a\to 0$, the sets $M_{[a,b]}$ Hausdorff converge to $M_b$, and $\partial M_a$ converges to $\operatorname{core}(M)$, Semi-continuity implies that 
 \begin{multline*}  
        \operatorname{pb}_{\widehat {M}}^+\left(\widehat{L}\cap M_{b},\widehat{L^\prime}\cap M_{b}, \operatorname{core}(M), \partial M_b \right) \ge \\
        \ge \limsup_{a\to 0} \operatorname{pb}_{\widehat {M}}^+\left(\widehat{L}\cap M_{[a,b]},\widehat{L^\prime}\cap M_{[a,b]}, \partial M_{a}, \partial M \right) \ge \\
        \ge \lim_{a\to 0} \frac {1}{\mu \left(b-a\right)} = \frac {1}{\mu b }.
\end{multline*}
\end{proof}

\subsection{\texorpdfstring{$\operatorname{pb}^+_M$}{pb+} and Deformations of the Symplectic Form.}
The typical approach for establishing a lower bound on $pb^+$ involves finding obstructions for suitable deformations of the symplectic form. (see e.g \cite[proof of Proposition 5.1]{entov2017lagrangian}.)
\begin{lemma}\label{lem:pb4DeformationBound}
    Let $X_0,X_1,Y_0,Y_1$ be an admissible quadruple, suppose there exists a number $\kappa>0$, such that for all pairs of admissible functions $(F,G)\in \mathcal{G}_M\left(X_0,X_1,Y_0,Y_1\right)$, there exists $0<t\le \kappa$ such that the form
    \[
    \omega_t := \omega + tdF\wedge dG
    \]
    is \emph{degenerate}. Then,
    \[
        \operatorname{pb}^+_M\left(X_0,X_1,Y_0,Y_1\right) \ge \frac{1}{\kappa}
    \]
\end{lemma}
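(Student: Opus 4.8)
## Proof proposal for Lemma \ref{lem:pb4DeformationBound}

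The plan is to argue by contraposition. Suppose $\operatorname{pb}^+_M(X_0,X_1,Y_0,Y_1) < \frac{1}{\kappa}$; I will produce a pair $(F,G) \in \mathcal{G}_M(X_0,X_1,Y_0,Y_1)$ for which $\omega_t = \omega + t\,dF\wedge dG$ is nondegenerate for all $0 < t \le \kappa$, contradicting the hypothesis. Using the equivalent definition of $\operatorname{pb}^+_M$ via the infimum over $\mathcal{G}_M$ rather than $\mathcal{F}_M$ (recalled in Section \ref{sec:poissonBracketInvariants}), the strict inequality yields a pair $(F,G) \in \mathcal{G}_M(X_0,X_1,Y_0,Y_1)$ with $\max_M\{F,G\} < \frac{1}{\kappa}$, i.e. the pointwise Poisson bracket satisfies $\lvert\{F,G\}\rvert < \frac{1}{\kappa}$ everywhere on $M$ (and $\{F,G\}$ has compact support).

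The core computation is to express $\omega_t^{\wedge n}$ in terms of $\omega^{\wedge n}$ and $\{F,G\}$, where $2n = \dim M$. Since $dF\wedge dG$ is a $2$-form and $(dF\wedge dG)^{\wedge 2} = 0$, the binomial-type expansion gives
\[
\omega_t^{\wedge n} = \omega^{\wedge n} + n\,t\,(dF\wedge dG)\wedge \omega^{\wedge(n-1)}.
\]
A standard identity (see e.g. the computation in \cite[proof of Proposition 5.1]{entov2017lagrangian}) identifies $(dF\wedge dG)\wedge \omega^{\wedge(n-1)}$ with $\frac{1}{n}\{F,G\}\,\omega^{\wedge n}$, up to a sign fixed by the sign conventions for the Poisson bracket and Hamiltonian vector fields used in the paper; concretely one writes $dF = -\iota_{X_F}\omega$, $dG = -\iota_{X_G}\omega$, and uses $\iota_{X_F}\iota_{X_G}\omega^{\wedge n} = n(\iota_{X_F}\iota_{X_G}\omega)\wedge\omega^{\wedge(n-1)}$ together with $\iota_{X_F}\iota_{X_G}\omega = \omega(X_G,X_F) = \{F,G\}$ (with the paper's conventions). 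Thus
\[
\omega_t^{\wedge n} = \bigl(1 + t\,\{F,G\}\bigr)\,\omega^{\wedge n}.
\]
Since $\lvert\{F,G\}\rvert < \frac{1}{\kappa}$ on $M$ and $\{F,G\}$ vanishes outside a compact subset of $M$, for every $t$ with $0 < t \le \kappa$ we have $1 + t\{F,G\} > 0$ everywhere on $M$, so $\omega_t^{\wedge n}$ is nowhere zero and $\omega_t$ is a nondegenerate $2$-form on $M$ for all such $t$. (Outside the support of $F$ or $G$, $\omega_t = \omega$, so nondegeneracy is automatic there.) This contradicts the assumption of the lemma, completing the proof.

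The main subtlety to get right is the sign and normalization constant in the identity relating $(dF\wedge dG)\wedge\omega^{\wedge(n-1)}$ to $\{F,G\}\,\omega^{\wedge n}$, since the paper uses the convention $\omega(X_H,\cdot) = -dH(\cdot)$, which differs from some references; one must check that the resulting coefficient in $\omega_t^{\wedge n} = (1 + c\,t\,\{F,G\})\,\omega^{\wedge n}$ is exactly $c = 1$ (or at worst track a harmless constant, absorbing it by possibly shrinking the allowed range of $t$, though with the normalization in $\mathcal{G}_M$ one expects $c=1$ on the nose). Everything else is formal: the passage from $\mathcal{F}_M$ to $\mathcal{G}_M$ is quoted from \cite{buhovsky2012poisson}, and the compact-support observation is what lets us conclude global positivity of $1 + t\{F,G\}$ from the strict pointwise bound.
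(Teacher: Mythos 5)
Your overall strategy (the wedge-power computation relating $\omega_t^{\wedge n}$ to $\{F,G\}\,\omega^{\wedge n}$, the passage from $\mathcal{F}_M$ to $\mathcal{G}_M$, and the compact-support remark) is the same as the paper's, which runs the equivalent direct argument: for any admissible pair, degeneracy at some $t\le\kappa$ forces $\max_M\{F,G\}\ge 1/t\ge 1/\kappa$. However, as written your proof has a genuine error precisely at the point you dismiss as harmless. First, the step ``$\max_M\{F,G\}<\frac1\kappa$, i.e.\ $\lvert\{F,G\}\rvert<\frac1\kappa$ everywhere'' is false: $\operatorname{pb}^+$ is defined with the one-sided maximum of the Poisson bracket, not the $C^0$-norm (the paper explicitly contrasts its $\max$-definition with the $C^0$-norm invariant of \cite{buhovsky2012poisson}), and nothing controls how negative $\{F,G\}$ gets; indeed, since $\{F,G\}\,\omega^{\wedge n}$ is exact with compact support, the bracket of any nontrivial admissible pair must take negative values, and its negative part is not bounded by its positive part. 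Second, with the paper's conventions ($\iota_{X_H}\omega=-dH$ and the bracket normalization implicit in the definition of $\operatorname{pb}^+$) the correct identity is $dF\wedge dG\wedge\omega^{\wedge(n-1)}=-\frac1n\{F,G\}\,\omega^{\wedge n}$, hence $\omega_t^{\wedge n}=\left(1-t\{F,G\}\right)\omega^{\wedge n}$, not $\left(1+t\{F,G\}\right)\omega^{\wedge n}$; your intermediate identity $\iota_{X_F}\iota_{X_G}\omega^{\wedge n}=n\left(\iota_{X_F}\iota_{X_G}\omega\right)\wedge\omega^{\wedge(n-1)}$ is also not correct as stated (it misses cross terms; the clean derivation contracts the vanishing $(2n+1)$-form $dG\wedge\omega^{\wedge n}$ with $X_F$).

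The sign is exactly where the one-sidedness of $\operatorname{pb}^+$ matters, so it cannot be ``absorbed by shrinking the range of $t$'': with the correct minus sign, the upper bound $\max_M\{F,G\}<\frac1\kappa$ alone gives $1-t\{F,G\}>0$ for all $0<t\le\kappa$ (negative values of the bracket only help), and your contrapositive argument then goes through and coincides with the paper's proof; with your plus sign, positivity of $1+t\{F,G\}$ would require a lower bound on $\{F,G\}$, which $\operatorname{pb}^+<\frac1\kappa$ does not supply, and the argument collapses---which is what forced the false absolute-value claim. The fix is straightforward: correct the sign, drop the absolute value, and use only the one-sided bound on $\{F,G\}$.
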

\begin{proof}
    Following \cite{entov2017lagrangian, buhovsky2012poisson}, consider the deformation
    \[
    \omega_\tau := \omega + \tau dF\wedge dG
    \]
    A calculation shows that
    \[
        dF\wedge dG \wedge \omega^{\wedge (n-1)} = -\frac{1}{n}\lbrace F,G \rbrace\omega^{\wedge n}.
    \]
    Therefore
    \[
    \omega_\tau^{\wedge n} = \left(1 - \tau \left\lbrace F,G \right\rbrace \right)\omega^{\wedge n},
    \]
    and hence, for all $\tau\in \left[0,\frac{1}{\max_M \left\lbrace F,G \right\rbrace}\right)$, $\omega_\tau$ is non-degenerate.
    By the assumption of the lemma, there exists $0<t\le\kappa$ such that $\omega_t$ is degenerate, hence $\kappa \ge t \ge \frac{1}{\max_M \left\lbrace F,G \right\rbrace}$, and since $\kappa$ is independent of $F$ and $G$, the result follows.
\end{proof}

\subsection{Reformulation with an Equivalent Quadruple}

For the sake of convenience in the proof, instead of working directly with $\operatorname{pb}_{\widehat {M}}^+\left(\widehat{L}\cap M_{[\eps,1]},\widehat{L^\prime}\cap M_{[\eps,1]}, \partial M_\eps, \partial M \right)$ we will show it is equivalent to a Poisson bracket invariant of another  quadruple, which is $\operatorname{pb}^+_{\widehat{M}} \left(\widehat{L}, \widehat{L^\prime},  \partial M_\varepsilon, M_{[1,\infty)} \right)$, to be defined below.

By a slight abuse of notation\footnote{Within the scope of this paper, we defined the sets $\mathcal{G}$ and the $\operatorname{pb}^+$ invariant for a quadruple of compact sets. Here we extend the notation for special cases with non-compact sets. See \cite{entov2022legendrian} for a more general definition for the non-compact case.} we denote the following set of pairs of functions
\[
\left\lbrace \left(F,G\right) \in C^\infty_c(\widehat{M}) \times C^\infty(\widehat{M})  \,\middle\vert\,
\begin{aligned}
    F\vert^{}_{\mathcal{O}p\left(L\cap  M_{[\varepsilon,1]}\right)} \equiv 0, &&
    F\vert^{}_{\mathcal{O}p\left(L^\prime \cap  M_{[\varepsilon,1]}\right)} \equiv 1 \\
    G\vert^{}_{\mathcal{O}p\left( M_\varepsilon\right)} \equiv 0, && G\vert^{}_{\mathcal{O}p\left(M_{[1,\infty)}\right)} \equiv 1 \\
\end{aligned}
\right\rbrace,
\]
by $\mathcal{G}_{\widehat{M}} \left(L \cap  M_{[\varepsilon,1]}, L^\prime \cap  M_{[\varepsilon,1]},   M_\varepsilon, M_{[1,\infty)} \right)$, and the set 
\[
\left\lbrace \left(F,G\right) \in C^\infty(\widehat{M}) \times C^\infty(\widehat{M})  \,\middle\vert\,
\begin{aligned}
    F\vert^{}_{\mathcal{O}p\left(\widehat{L}\right)} \equiv 0, &&
    F\vert^{}_{\mathcal{O}p\left(\widehat{L^\prime} \right)} \equiv 1 \\
    G\vert^{}_{\mathcal{O}p\left(M_\varepsilon\right)} \equiv 0, && G\vert^{}_{\mathcal{O}p\left(M_{[1,\infty)}\right)} \equiv 1 \\
\end{aligned}
\right\rbrace.
\]
by $\mathcal{G}_{\widehat{M}} \left(\widehat{L} , \widehat{L^\prime},  M_\varepsilon, M_{[1,\infty)} \right)$.

Note that while $G$ and $F$ are not compactly supported, the conditions imply that $dG$ is compactly supported, hence $\left\lbrace F,G \right\rbrace$ is compactly supported, hence obtains its maximum.

Proceeding with the abuse of notations, we then define 
\begin{multline*}
    \operatorname{pb}^+_{\widehat{M}} \left(L \cap  M_{[\varepsilon,1]}, L^\prime \cap  M_{[\varepsilon,1]},  \partial M_\varepsilon, M_{[1,\infty)} \right) := \\
     \inf_{\mathcal{G}_{\widehat{M}} \left(L \cap  M_{[\varepsilon,1]}, L^\prime \cap  M_{[\varepsilon,1]},  \partial M_\varepsilon, M_{[1,\infty)} \right)} \max_{\widehat{M}} \left\lbrace F,G \right\rbrace,
\end{multline*}
allowing for one of the four sets to be the whole end at infinity, and
\[
    \operatorname{pb}^+_{\widehat{M}} \left(\widehat{L}, \widehat{L^\prime},  \partial M_\varepsilon, M_{[1,\infty)} \right) :=
     \inf_{\mathcal{G}_{\widehat{M}} \left(\widehat{L} , \widehat{L^\prime},  \partial M_\varepsilon, M_{[1,\infty)} \right)} \max_{\widehat{M}} \left\lbrace F,G \right\rbrace,
\]
also allowing for the whole conical Lagrangians into the quadruple.
We then claim that $\operatorname{pb}^+_{\widehat{M}} \left(\widehat{L}, \widehat{L^\prime},  \partial M_\varepsilon, M_{[1,\infty)} \right)$ equals the invariant we are interested in:
\begin{claim}\label{clm:pbConvenientForm}
    \begin{multline*}
        \operatorname{pb}_{\widehat {M}}^+\left(L \cap  M_{[\varepsilon,1]}, L^\prime \cap  M_{[\varepsilon,1]}, \partial M_\eps, \partial M \right) = \\
        =
        \operatorname{pb}^+_{\widehat{M}} \left(L \cap  M_{[\varepsilon,1]}, L^\prime \cap  M_{[\varepsilon,1]},  M_\varepsilon, M_{[1,\infty)} \right) = \\
        = 
        \operatorname{pb}^+_{\widehat{M}} \left(\widehat{L}, \widehat{L^\prime},   M_\varepsilon, M_{[1,\infty)} \right).
    \end{multline*} 
\end{claim}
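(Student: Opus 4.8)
The plan is to establish the two equalities separately, each through a pair of cutoff constructions on the test pairs $(F,G)$, in the same spirit as the passage from $\mathcal{F}$ to $\mathcal{G}$. The guiding observation is that the pointwise value $\{F,G\}(p)$ depends only on $dF(p)$ and $dG(p)$; hence modifying $F$ (resp.\ $G$) on a region where the differential of the other function vanishes, or where the modified function already carries its prescribed constant value, cannot increase $\max_{\widehat{M}}\{F,G\}$. In each construction below the new pair will in fact satisfy $\{F_{1},G_{1}\}=\{F,G\}$ pointwise, with the single exception of the first step, where the new bracket merely coincides with $\{F,G\}$ on $M_{[\varepsilon,1]}$ and vanishes elsewhere. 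In all cases one then reads off the desired inequality between infima. Note that throughout, $dG$ (or $d\widetilde G$) is supported in the compact set $(\varepsilon,1)\times\Sigma$, since in each family $G$ is locally constant on neighbourhoods of $M_\varepsilon$ and of $M_{[1,\infty)}$.

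\emph{First equality.} For ``$\le$'' in $\operatorname{pb}^+_{\widehat{M}}(L\cap M_{[\varepsilon,1]},L^\prime\cap M_{[\varepsilon,1]},\partial M_\varepsilon,\partial M)\le\operatorname{pb}^+_{\widehat{M}}(L\cap M_{[\varepsilon,1]},L^\prime\cap M_{[\varepsilon,1]},M_\varepsilon,M_{[1,\infty)})$: start with $(F,G)$ in the second family; since $F$ is compactly supported, truncating $G$ to a compactly supported $G_1$ that agrees with $G$ on a neighbourhood of $\operatorname{supp}(F)$ and is $0$ for $r$ large gives a pair $(F,G_1)$ in the first family with $\{F,G_1\}=\{F,G\}$ everywhere. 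For ``$\ge$'': start with $(F,G)$ in the first family, so $G\equiv 0$ on a two-sided neighbourhood of $\{\varepsilon\}\times\Sigma$ and $G\equiv 1$ on a two-sided neighbourhood of $\{1\}\times\Sigma$; define $\widetilde G$ to be $0$ on $M_\varepsilon$, to equal $G$ on $M_{[\varepsilon,1]}$, and to be $1$ on $M_{[1,\infty)}$. The two-sided conditions make $\widetilde G$ smooth, $\widetilde G$ is locally constant off $M_{[\varepsilon,1]}$ and equals $G$ near $M_{[\varepsilon,1]}$, so $(F,\widetilde G)$ lies in the second family, $\{F,\widetilde G\}$ coincides with $\{F,G\}$ on $M_{[\varepsilon,1]}$ and vanishes elsewhere, and in particular $\max_{\widehat{M}}\{F,\widetilde G\}\le\max_{\widehat{M}}\{F,G\}$.

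\emph{Second equality.} Here the $G$-component is untouched in both directions. For ``$\le$'' in $\operatorname{pb}^+_{\widehat{M}}(L\cap M_{[\varepsilon,1]},L^\prime\cap M_{[\varepsilon,1]},M_\varepsilon,M_{[1,\infty)})\le\operatorname{pb}^+_{\widehat{M}}(\widehat{L},\widehat{L^\prime},M_\varepsilon,M_{[1,\infty)})$: given $(F,G)$ with $F$ vanishing near all of $\widehat{L}$ and equal to $1$ near all of $\widehat{L^\prime}$, multiply $F$ by a compactly supported cutoff $\chi$ that is $\equiv 1$ on a neighbourhood of $M_{[\varepsilon,1]}$; then $F_1=\chi F$ is compactly supported, still equals $0$ near $L\cap M_{[\varepsilon,1]}$ and $1$ near $L^\prime\cap M_{[\varepsilon,1]}$, and $\{F_1,G\}=\{F,G\}$ pointwise since the two functions agree near $\operatorname{supp}(dG)$. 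For ``$\ge$'': given $(F,G)$ in the first family, with $F$ vanishing near $L\cap M_{[\varepsilon,1]}$ and equal to $1$ near $L^\prime\cap M_{[\varepsilon,1]}$, one must produce $\widetilde F\in C^\infty(\widehat{M})$ vanishing near all of $\widehat{L}$, equal to $1$ near all of $\widehat{L^\prime}$, and agreeing with $F$ on a neighbourhood of $\operatorname{supp}(dG)$; then again $\{\widetilde F,G\}=\{F,G\}$ pointwise and the infimum inequality follows.

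This last step is the only point requiring care. Because $L$ and $L^\prime$ are conical in $M_{[\varepsilon,1]}$, the portions of $\widehat{L}$ and $\widehat{L^\prime}$ meeting $(\varepsilon,1)\times\Sigma$ are exactly $L\cap M_{[\varepsilon,1]}$ and $L^\prime\cap M_{[\varepsilon,1]}$; since $\operatorname{supp}(dG)\subseteq(\varepsilon,1)\times\Sigma$, on the overlap of a neighbourhood of $\operatorname{supp}(dG)$ with $\widehat{L}$ (resp.\ $\widehat{L^\prime}$) the function $F$ already equals $0$ (resp.\ $1$). As $\widehat{L}$ and $\widehat{L^\prime}$ are disjoint closed subsets of $\widehat{M}$ — their Legendrian ends are disjoint because $L\cap L^\prime=\emptyset$ — the three prescriptions ``$\equiv 0$ near $\widehat{L}$'', ``$\equiv 1$ near $\widehat{L^\prime}$'', ``$=F$ near $\operatorname{supp}(dG)$'' are mutually consistent on overlaps, so they define a smooth function on an open neighbourhood of $\widehat{L}\cup\widehat{L^\prime}\cup\operatorname{supp}(dG)$, which is extended to all of $\widehat{M}$ by a bump function. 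The remaining bookkeeping is routine manipulation of cutoffs, and no Floer-theoretic input enters; combining the four inequalities above yields the chain of equalities of the claim.
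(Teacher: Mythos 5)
Your argument is correct and follows essentially the same route as the paper: both equalities are proved by cutoff and gluing modifications of the test pair, exploiting that $\{F,G\}$ is supported in $\operatorname{supp}(dF)\cap\operatorname{supp}(dG)$, and your explicit construction of $\widetilde F$ in the second equality merely fills in a detail the paper leaves implicit. One small point to tighten: in the first equality the truncation $G_1$ must agree with $G$ not only on a neighbourhood of $\operatorname{supp}(F)$ but also near $\partial M_\varepsilon$ and $\partial M$ (e.g.\ take the cutoff $\equiv 1$ on $M_K$ for some large $K>1$, as the paper does with $\rho$), since agreement near $\operatorname{supp}(F)$ alone does not guarantee that $(F,G_1)$ lies in the admissible family for the compact quadruple $\left(L\cap M_{[\varepsilon,1]},\,L^\prime\cap M_{[\varepsilon,1]},\,\partial M_\varepsilon,\,\partial M\right)$.
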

\begin{proof}
We first prove the first equality, by proving two inequalities.

Starting with the $\le$ direction: \\
For every pair $(F,G)\in \mathcal{G}_{\widehat{M}} \left(L \cap  M_{[\varepsilon,1]}, L^\prime \cap  M_{[\varepsilon,1]},  \partial M_\varepsilon, \partial M \right)$, the function $G$ is equal to a constant, $1$, near $\partial M$, which is a set separating $\widehat{M}$ into two connected components. Similarly $G\equiv 0$ near $\partial M_\varepsilon$, also a set separating $\widehat{M}$ into two connected components.
Hence, one can replace $G$ by
\[
\overline{G}\left(x\right) := \begin{cases}
1 & x \in M_{[1,\infty)} \\
G(x) & x \in M_{[\varepsilon,1]} \\
0 & x \in M_\varepsilon
\end{cases},
\]
which is also smooth, and therefore 
\[(F,\overline{G})\in \mathcal{G}_{\widehat{M}} \left(L \cap  M_{[\varepsilon,1]}, L^\prime \cap  M_{[\varepsilon,1]},  M_\varepsilon, M_{[1,\infty)} \right).\]

Now, since $d\overline{G}=0$ in $M_{\eps+\delta} \cup M_{[1-\delta,\infty)}$, for some small $\delta>0$, it holds that $\left\lbrace F, \overline{G} \right\rbrace = 0$ there, therefore one has that  $\max_{\widehat{M}}\left\lbrace F, G \right\rbrace \ge \max_{\widehat{M}}\left\lbrace F, \overline{G} \right\rbrace$, proving the inequality.

In the other direction ($\ge$):\\
Take $(F,G)\in \mathcal{G}_{\widehat{M}} \left(L \cap  M_{[\varepsilon,1]}, L^\prime \cap  M_{[\varepsilon,1]},  M_\varepsilon,  M_{[1,\infty)} \right)$
The function $F$ is compactly supported, hence there exists $K>1$ such that $\operatorname{supp} F \subset M_K$. Let $\rho\colon \widehat{M} \to \mathbb{R}$ be a cutoff function such that $\rho\vert^{}_{M_K} \equiv 1$ and $\rho\vert^{}_{M_{[2K,\infty)}} \equiv 0$.
Then the pair $(F,\rho G)$ is in $\mathcal{G}_{\widehat{M}} \left(L \cap  M_{[\varepsilon,1]}, L^\prime \cap  M_{[\varepsilon,1]}, \partial M_\varepsilon,  \partial M \right)$.
Since $F\vert^{}_{[K,\infty)}\equiv 0$, and $\rho G\vert^{}_{M_K} = G\vert^{}_{M_K}$, we have that 
\[
\max_{\widehat{M}}\left\lbrace F, \rho G \right\rbrace \ge \max_{\widehat{M}}\left\lbrace F, G\right\rbrace, 
\]
completing the proof.

The second equality follows from the fact that if $G$ satisfies that there exists a $\delta > 0$ such that it is locally constant in $M_{\varepsilon + \delta} \cup M_{[1-\delta,\infty)}$, and hence $dG=0$ there, therefore it follows that $\lbrace F,G \rbrace=0$ in $M_{\varepsilon + \delta} \cup M_{[1-\delta,\infty)}$ for every $F\in C^\infty(\widehat{M})$. Hence, for every two functions $F_1,F_2 \in C^\infty(\widehat{M})$ such that $F_1\vert^{}_{M_{[\varepsilon,1]}} = F_2\vert^{}_{M_{[\varepsilon,1]}}$ it holds that 
\[
\max_{\widehat{M}} \lbrace F_1,G \rbrace = \max_{\widehat{M}} \lbrace F_2,G \rbrace.
\]
\end{proof}

\subsection{A Lagrangian Isotopy from Admissible Pairs.}\label{ssec:lagrangianIsotopy}
Let $L_0, L^\prime_0$ be two asymptotically conical exact Lagrangians in $M$, with $f,g$ primitives of the restrictions of $\lambda$ to $L_0$ and $ L^\prime_0$, respectively, chosen such that they restrict to a constant, $0$, in a neighborhood of $\partial M$. Moreover, assume that  $L_0$ and $L^\prime_0$ are conical in $M_{[\eps,1]}$. Let $(F,G)\in\mathcal{G}_{\widehat{M}} \left(\widehat{L} , \widehat{L^\prime} ,M_\varepsilon, M_{[1,\infty)} \right)$ be a pair of admissible functions.
In this section we construct from $(F,G)$ a Lagrangian isotopy which will play a role in our proof of theorem \ref{thm:mainTheorem}, and study some Floer theoretical constructions related to it.

Consider the following deformation of $\omega$ in $\widehat{M}$:
\[
\omega_\tau = \omega + \tau dF\wedge dG.
\]
Note that the form $dF\wedge dG$ is compactly supported.

Our first step is to convert this deformation to a Lagrangian isotopy with respect to a fixed symplectic form.

Note that $\tau dF\wedge dG = \tau d\left(F dG\right)$, hence $\frac{d}{d\tau} \omega_\tau$ is exact with a a compactly supported primitive, $F dG$. Moreover $\omega_\tau = d\left(\lambda + \tau FdG\right)$, with the primitives $\lambda_\tau = \lambda + \tau FdG$ all coinciding with each other, in the complement of a compact set.
Therefore, for every $T>0$ we can apply Moser's stability theorem to the deformation $\left\lbrace\omega_\tau\right\rbrace_{0\le \tau \le T}$. We recall it:
\begin{theorem}[Moser's stability theorem {\cite[Theorem 6.8]{cieliebak2012stein}}] \label{thm:moserTrick}

Let $V$ be a manifold (without boundary but not necessarily compact). Let $\omega_t$, $t \in [0, 1]$, be a smooth family of symplectic forms on $V$ which coincide outside a compact set and such that the cohomology class with compact support $[\omega_t -\omega_0] \in H^2_c(V ;\mathbb{R})$ is independent of $t$. Then there exists a diffeotopy $\phi_t$ with $\phi_t = \operatorname{Id}$ outside a compact set such that $\phi^*_t\omega_t=\omega_0$.

In particular, this applies if $\omega_t = d\lambda_t$ for a smooth family of 1-forms $\lambda_t$ which coincide outside a compact set, and in this case there exists a smooth family of
functions $f_t \colon V \to \mathbb{R}$ with compact support such that
\[
\phi^*_t\lambda_t - \lambda_0 = df_t.
\]\qed
\end{theorem}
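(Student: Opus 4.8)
\emph{Proof proposal (the Moser trick).} The statement is the classical stability theorem of Moser, and I would prove it by producing the diffeotopy $\phi_t$ as the flow of a time-dependent vector field forced to exist by the cohomological hypothesis. Fix a compact set $K\subset V$ outside of which all the $\omega_t$ coincide. The role of the assumption is the following: since $[\omega_t-\omega_0]_c\in H^2_c(V;\mathbb{R})$ is independent of $t$, its $t$-derivative vanishes, i.e. $[\dot\omega_t]_c=0$ in $H^2_c(V;\mathbb{R})$. Hence $\dot\omega_t=d\mu_t$ for a compactly supported $1$-form $\mu_t$, whose support can be taken inside a fixed compact set, and --- with a little care, e.g. by fixing once and for all a homotopy/smoothing operator on a tubular neighbourhood of $K$ --- the family $t\mapsto\mu_t$ can be chosen to depend smoothly on $t$.

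Using nondegeneracy of each $\omega_t$, define a time-dependent vector field $X_t$ by $\iota_{X_t}\omega_t=-\mu_t$. Since $\mu_t$ is supported in a fixed compact set and $\omega_t$ is nondegenerate, $X_t$ vanishes off that set and is therefore complete; let $\phi_t$ be its flow, so $\phi_0=\mathrm{Id}$ and $\phi_t=\mathrm{Id}$ outside a compact set. Then, by Cartan's formula and $d\omega_t=0$,
\[
\frac{d}{dt}\,\phi_t^*\omega_t=\phi_t^*\bigl(\mathcal{L}_{X_t}\omega_t+\dot\omega_t\bigr)=\phi_t^*\bigl(d\iota_{X_t}\omega_t+d\mu_t\bigr)=\phi_t^*\bigl(-d\mu_t+d\mu_t\bigr)=0,
\]
so $\phi_t^*\omega_t\equiv\omega_0$, which is the first assertion.

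For the ``in particular'' clause, suppose $\omega_t=d\lambda_t$ with the $\lambda_t$ coinciding outside $K$. Then $\dot\lambda_t$ is compactly supported with $d\dot\lambda_t=\dot\omega_t$, so I would simply take $\mu_t:=\dot\lambda_t$ in the construction above. A second Cartan computation gives, using $\iota_{X_t}\omega_t=-\dot\lambda_t$,
\[
\frac{d}{dt}\,\phi_t^*\lambda_t=\phi_t^*\bigl(\mathcal{L}_{X_t}\lambda_t+\dot\lambda_t\bigr)=\phi_t^*\bigl(d(\lambda_t(X_t))+\iota_{X_t}\omega_t+\dot\lambda_t\bigr)=d\bigl(\phi_t^*(\lambda_t(X_t))\bigr).
\]
Integrating in $t$ yields $\phi_t^*\lambda_t-\lambda_0=df_t$ with $f_t:=\int_0^t\phi_s^*\bigl(\lambda_s(X_s)\bigr)\,ds$, and $f_t$ has compact support because $X_s$ --- hence $\lambda_s(X_s)$, and hence its pullback under $\phi_s$, which is the identity off a compact set --- is supported in a fixed compact set.

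The only genuinely non-formal point is the very first step: extracting a primitive $\mu_t$ of $\dot\omega_t$ that is both compactly supported and smooth in $t$ out of the vanishing of the class in $H^2_c(V;\mathbb{R})$. This is standard compactly-supported de Rham theory, but it is exactly where the hypothesis is consumed; everything after it is the formal Moser computation together with bookkeeping of supports.
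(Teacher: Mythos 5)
Your proposal is correct and is exactly the standard Moser deformation argument that the paper relies on: the paper itself gives no proof (it cites Cieliebak--Eliashberg), and the ingredients it later extracts from that proof --- the vector field $X_t$ solving $\dot\lambda_t + \iota_{X_t}\omega_t = 0$ and the primitive $f_t = \int_0^t \phi_s^*\bigl(\iota_{X_s}\lambda_s\bigr)\,ds$ --- coincide with your construction (you even correctly include the pullback by the flow, which the paper's quoted formula suppresses). Your flagged caveat about choosing the compactly supported primitive $\mu_t$ smoothly in $t$ is indeed the only non-formal step, and it is handled in the cited reference just as you indicate.
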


We thus deduce that there exists a diffeotopy $\phi_\tau$ with $\phi^*\omega_\tau = \omega$, and a family of smooth functions which we denote by $h_\tau\colon \widehat{M} \to \mathbb{R}$ such that $\phi^*_\tau\lambda_\tau - \lambda = dh_\tau$.

Define Lagrangians $\widehat{L}_\tau := \phi_\tau^{-1}\left(\widehat{L}\right)$ and $\widehat{L^\prime}_\tau := \phi_\tau^{-1}\left(\widehat{L^\prime}\right)$.
\begin{proposition}\label{prop:lagrangianIsotopyProperties}
     The following properties hold:
    \begin{enumerate}
        \item $\widehat{L}_\tau$ = $L$ for all $\tau$.
        \item The deformation $\widehat{L^\prime}_\tau$ is supported in the interior of $M_{[\varepsilon,1]}$, namely, there exists $\delta > 0$ such that for all $\tau$:
        \[\widehat{L^\prime}_\tau \cap M_{\varepsilon+\delta} = \widehat{L^\prime} \cap M_{\varepsilon+\delta},\text{ and } \,\widehat{L^\prime}_\tau \cap M_{[1-\delta,\infty)} = \widehat{L^\prime} \cap M_{[1-\delta,\infty)}.
        \]
        \item $\widehat{L}_\tau$ and $\widehat{L^\prime}_\tau$ are both exact Lagrangians. Moreover $\lambda\vert^{}_{\widehat{L}_\tau} = df$, and there exist functions $g_\tau$ such that $\lambda\vert^{}_{\widehat{L}^\prime_\tau} = dg_\tau$. Each function $g_\tau$ vanishes in a neighborhood of $\widehat{L}_\tau \cap M_\varepsilon$, and $g_\tau = \tau$ in a neighborhood of $\widehat{L}_\tau \cap M_{[1,\infty)} $.
    \end{enumerate}
\end{proposition}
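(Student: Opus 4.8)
The plan is to unwind the explicit Moser isotopy behind $\phi_\tau$ and record precisely where it is the identity. Recall from the proof of Moser's stability theorem (Theorem \ref{thm:moserTrick}) that $\phi_\tau$ may be taken to be the flow of the time-dependent vector field $X_\tau$ determined by $\iota_{X_\tau}\omega_\tau = -\dot\lambda_\tau = -F\,dG$ (valid for those $\tau$ with $\omega_\tau$ symplectic, i.e. on the interval to which we applied Moser), and that one may take $h_\tau = \int_0^\tau \phi_s^*\big(\iota_{X_s}\lambda_s\big)\,ds$, so that $h_0 = 0$ and $\dot h_\tau = \phi_\tau^*\big(\iota_{X_\tau}\lambda_\tau\big)$. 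The key observation is that $X_\tau$ vanishes identically on the open set $U := \mathcal{O}p(\widehat{L})\cup\mathcal{O}p(M_\varepsilon)\cup\mathcal{O}p(M_{[1,\infty)})$: near $\widehat L$ the function $F$ is identically $0$, so $F\,dG = 0$ there; near $M_\varepsilon$ and near $M_{[1,\infty)}$ the function $G$ is locally constant, so $dG = 0$ and again $F\,dG = 0$. Hence $\phi_\tau$ fixes $U$ pointwise, preserves $U$, and $h_\tau \equiv 0$ on $U$.

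Items 1 and 2 then follow immediately. Since $\phi_\tau = \mathrm{Id}$ near $\widehat L$ we get $\widehat L_\tau = \phi_\tau^{-1}(\widehat L) = \widehat L$ for all $\tau$, which is Item 1. For Item 2, choose $\delta>0$ so small that $M_{\varepsilon+\delta}$ and $M_{[1-\delta,\infty)}$ lie inside the neighbourhoods on which $G$ is locally constant; there $\phi_\tau$ is the identity, so $\widehat{L^\prime}_\tau\cap M_{\varepsilon+\delta} = \phi_\tau^{-1}(\widehat{L^\prime})\cap M_{\varepsilon+\delta} = \widehat{L^\prime}\cap M_{\varepsilon+\delta}$ and likewise over $M_{[1-\delta,\infty)}$. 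In particular $\widehat{L^\prime}_\tau$ is conical near infinity, where it coincides with $\widehat{L^\prime}$, and it remains disjoint from $\widehat L_\tau = \widehat L$.

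For Item 3, first note that $\widehat L_\tau$ and $\widehat{L^\prime}_\tau$ are Lagrangian for $\omega$: near $\widehat L$ and near $\widehat{L^\prime}$ we have $dF = 0$, hence $dF\wedge dG = 0$ and $\omega_\tau = \omega$ there, so $\widehat L,\widehat{L^\prime}$ are $\omega_\tau$-Lagrangian, and applying $\phi_\tau^{-1}$ together with $\phi_\tau^*\omega_\tau = \omega$ yields the claim. The primitive on $\widehat L_\tau$ is $\lambda\vert_{\widehat L_\tau} = \lambda\vert_{\widehat L} = df$ by Item 1. For $\widehat{L^\prime}_\tau$, restrict Moser's relation $\phi_\tau^*\lambda_\tau = \lambda + dh_\tau$ to $\widehat{L^\prime}_\tau$; writing $\psi_\tau := \phi_\tau\vert_{\widehat{L^\prime}_\tau}\colon\widehat{L^\prime}_\tau\to\widehat{L^\prime}$ and using $F\equiv 1$ near $\widehat{L^\prime}$, so that $\lambda_\tau\vert_{\widehat{L^\prime}} = \lambda\vert_{\widehat{L^\prime}} + \tau\,dG\vert_{\widehat{L^\prime}} = d\big(g+\tau\,G\vert_{\widehat{L^\prime}}\big)$, we obtain
\[
\lambda\vert_{\widehat{L^\prime}_\tau} = d\,g_\tau,\qquad g_\tau := \big(g+\tau\,G\vert_{\widehat{L^\prime}}\big)\circ\psi_\tau - h_\tau\vert_{\widehat{L^\prime}_\tau}.
\]
On $\widehat{L^\prime}_\tau\cap M_\varepsilon$, which by Item 2 equals $\widehat{L^\prime}\cap M_\varepsilon$ and on which $\psi_\tau = \mathrm{Id}$, $h_\tau = 0$ and $G = 0$, this gives $g_\tau = g$, which vanishes there by our normalisation of the primitive (on the conical collar $\lambda\vert_{\widehat{L^\prime}}$ vanishes and $g\equiv 0$ near $\partial M$). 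On $\widehat{L^\prime}_\tau\cap M_{[1,\infty)}$ we have likewise $\psi_\tau = \mathrm{Id}$, $h_\tau = 0$, $G = 1$ and $g = 0$, so $g_\tau = g+\tau = \tau$.

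The main — indeed the only — delicate point is the bookkeeping of supports: one must use that $F$ is \emph{identically} $0$ near $\widehat L$ and that $G$ is \emph{locally constant} near $M_\varepsilon$ and $M_{[1,\infty)}$, not merely that $dF\wedge dG$ vanishes there, so that $F\,dG$ and hence $X_\tau,h_\tau$ are trivial on exactly those regions. Once this is set up, every assertion is read off directly from the explicit Moser isotopy, with no new analytic input.
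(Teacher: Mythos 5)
Your proof is correct and follows essentially the same route as the paper's: you unwind the Moser isotopy, observe that $F\,dG$ (hence $X_\tau$, $h_\tau$, and the displacement of $\phi_\tau$) vanishes near $\widehat{L}$, $M_\varepsilon$ and $M_{[1,\infty)}$, and then read off $g_\tau = (g+\tau G)\circ\psi_\tau - h_\tau$ with the stated boundary values, exactly as in the paper (your version is in fact slightly more careful, e.g.\ including the pullback in the formula for $h_\tau$ and checking Lagrangianity via $\phi_\tau^*\omega_\tau=\omega$). No substantive difference in approach.
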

\begin{proof}
    Let us now analyze the behaviour of $\phi_\tau$ and $f_\tau$ along the sets $\widehat{L}$, $\widehat{L^\prime}$, $M_\varepsilon$ and $M_{[1,\infty)}$.
    
    Denote $\lambda_\tau = \lambda + \tau FdG$. Then $\omega_\tau = d\lambda_\tau$.
    From the proof in \cite{cieliebak2012stein} it follows that $\phi_\tau$ is the flow of the vector field $X_\tau$ solving
    \[
    \frac{d}{d\tau}\lambda_\tau + \iota^{}_{X_\tau}\omega_\tau = 0.
    \]
    The derivative, $\frac{d}{d\tau}\lambda_\tau = FdG$, vanishes in neighborhoods of $L$,  $ M_\varepsilon$ and $M_{[1,\infty)}$. That is because $F=0$ in a neighborhood of $L$ and $dG=0$ in neighborhoods of $M_\eps$ and $M_{[1,\infty)}$. Therefore, $X_\tau = 0$ in these neighborhoods, and $\phi_\tau = id$ there, which proves Items 1 and 2.
    We now prove Item 3. We first recall that the proof in \cite{cieliebak2012stein} gives the following formula for $h_\tau$:
    \[
        h_\tau = \intop_0^\tau \iota_{X_s}\lambda_s ds.
    \]
    It follows that $h_\tau \equiv 0$ in neighborhoods of  $M_\varepsilon$ and $M_{[1,\infty)}$, 
    
    Since $F\equiv1$ near $\widehat{L^\prime}$, we have that  $\lambda_\tau \vert^{}_{\widehat{L}^\prime} = dg + \tau dG = \left(g + \tau G\right)$; we also note that $\lambda_\tau \vert^{}_{\widehat{L}} = \lambda \vert^{}_{\widehat{L}} = df$, because near $\widehat{L}_\tau$, the form $dG=0$. Set $\tilde{g}_\tau = g + \tau G$. By Theorem \ref{thm:moserTrick}:
    \[
     \phi^*_\tau\lambda_\tau - \lambda = dh_\tau,
    \]
    hence
    \[
     \lambda_\tau - \phi^{-1*}_\tau\lambda = {\phi^{-1}_\tau}^* dh_\tau = d\left(h_\tau \circ \phi^{-1}_\tau\right),
    \]
    so
    \[
     \phi^{-1*}_\tau\lambda = \lambda_\tau - d\left(h_\tau \circ \phi^{-1}_\tau\right).
    \]
    We compute:
    \begin{multline*}
        \lambda\vert^{}_{\widehat{L}^\prime_\tau} = \lambda\vert^{}_{\phi^{-1}_\tau\left(\widehat{L}^\prime\right)} = 
        \phi^{-1*}_\tau\lambda\vert^{}_{\widehat{L}^\prime} = \lambda_\tau\vert^{}_{\widehat{L}^\prime} -d\left(h_\tau \circ \phi^{-1}_\tau\right)\vert^{}_{\widehat{L}^\prime} = \\ = 
        d\tilde{g}_\tau\vert^{}_{\widehat{L}^\prime} -d\left(h_\tau \circ \phi^{-1}_\tau\right)\vert^{}_{\widehat{L}^\prime} = 
        d\left( \tilde{g}_\tau - h_\tau \circ \phi^{-1}_\tau\right)\vert^{}_{\widehat{L}^\prime},
    \end{multline*}
     and set $g_\tau =  \tilde{g}_\tau - h_\tau \circ \phi^{-1}_\tau = g + \tau G  - h_\tau \circ \phi^{-1}_\tau $. The above computation shows that indeed $\lambda\vert^{}_{\widehat{L}^\prime_\tau} = dg_\tau$.

     Since $G\equiv 0$ in a neighborhood of $M_\eps$ and $G\equiv 1$ in a neighborhood of $M_{[1,\infty)}$, we have that  $\tilde{g}_\tau = g$ near $M_\eps \cap\widehat{L^\prime}_\tau$ and $\tilde{g}_\tau = g+\tau$ near $M_{[1,\infty)}\cap\widehat{L^\prime}_\tau$.
    
    Now, as we have shown, in a neighborhood of $M_\varepsilon$ the diffeotopy $\phi_\tau$ is the identity, the function $h_\tau = 0$ and $\tilde{g}_\tau = 0$, hence hence $g_\tau = 0$, and in a neighborhood of $M_{[1,\infty)}$, we also have that $\phi_\tau = id$, that $h_\tau = 0$ and that $\tilde{g}_\tau = 0$, hence $g_\tau = \tau$.  Concluding the proof of item 3.
\end{proof}

The isotopy $\widehat{L^\prime}_\tau$ is an isotopy of exact Lagrangians with respect to the same primitive $\lambda$ of $\omega$. By \cite[Exercise 11.3.17]{mcduff2017introduction} there exists a Hamiltonian isotopy $\psi_\tau\colon M \to M$, such that $\widehat{L^\prime}_\tau = \psi \left( \widehat{L^\prime} \right)$. Since for all $\tau$, $\widehat{L^\prime}_\tau \cap L = \emptyset$, the isotopy $\psi_\tau$ can be chosen to be identity in a neighborhood of $\widehat{L}$.



The next two propositions study how this isotopy interacts with certain wrapped Floer homologies.
\begin{proposition}\label{prop:conicalCommDiagram}
    Let $L, L^\prime$ be two disjoint exact Lagrangians in a Livouille domain $M=\left(Y,\omega,\lambda\right)$, such that $L$ and $L^\prime$ are conical in $M_{[\eps,1]}$. 
    Assume there exists an isotopy of exact Lagrangians, $L^\prime_\tau$ for $0\le\tau\le T$, starting at $L^\prime_0 = L^\prime$, with $\lambda\vert^{}_{L^\prime}=dg_\tau$ for a family of functions $g_\tau \colon L^\prime _\tau \to \mathbb{R}$.
    Assume that the following conditions are satisfied:
    \begin{itemize}
        \item In a neighborhood $\mathcal{U}$ of $\partial M_\eps$, $L^\prime_\tau$ is conical and  $g_\tau\vert^{}_{L^\prime_\tau\cap \mathcal{U}}\equiv 0$ for all $\tau \in [0,T]$,
        \item and in a neighborhood $\mathcal{V}$ of $\partial M$,  $L^\prime_\tau$ is conical and  $g_\tau\vert^{}_{L^\prime_\tau\cap \mathcal{V}}\equiv \tau$ for all $\tau \in [0,T]$.
    \end{itemize}
    Then, for every $\mu \ge 0$, the following diagram is well defined and commutes:
    \begin{equation*} 
        \begin{tikzcd}
        {HW^{\mu}\left(M,L \to L^\prime_0\right)} \arrow[dd, "{\mathcal{R}es}^{}_{M\to M_\varepsilon}"] \arrow[rr, "\mathcal{C}ont_{\{L^\prime_\tau\}_{0}^{T}}"] &  & {HW^{\mu-T}\left(M,L \to L^\prime_T\right)} \arrow[d, "{\mathcal{R}es}^{}_{M\to M_\varepsilon}"]                                                                \\
                                                                                                                                                                                                      &  & {HW^{\mu-T}\left(M_\varepsilon,L \cap M_\varepsilon\to L^\prime_T\cap M_\varepsilon\right)}                                                                        \\
        {HW^{\mu}\left(M_\varepsilon,L \cap M_\varepsilon\to L^\prime_0\cap M_\varepsilon\right)}                                                                                                     &  & {HW^{\mu-T}\left(M_\varepsilon,L \cap M_\varepsilon\to L^\prime_0\cap M_\varepsilon\right)} \arrow[ll, "\iota^{}_{(\mu-T)\to\mu}"'] \arrow[u, equal]
        \end{tikzcd}
        \end{equation*}
    The action filtration is defined with respect to the functions $g_\tau\colon L^\prime_\tau \to \mathbb{R}$ and a function $f\colon L\to \mathbb{R}$ which is taken to vanish in $M_{[\eps,1]}$. Such $f$ exists because $L$ is conical.
    The morphism $\mathcal{C}ont$ is defined in subsection \ref{ssec:continuationMovingBdry} and the morphism ${\mathcal{R}es}$ is defined in subsection \ref{ssec:viterboTransfer}.
\end{proposition}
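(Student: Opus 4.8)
The plan is to deduce the commutativity from the compatibility of the moving-boundary continuation map with ordinary Hamiltonian continuations, already established in Proposition \ref{prop:contAndContinuationCommute}, after realizing the transfer map ${\mathcal{R}es}^{}_{M\to M_\varepsilon}$ as such a continuation (possible here precisely because we work with a trivial cobordism, cf.\ subsection \ref{ssec:viterboTransfer}), and then transporting the resulting identity to the action filtration via the slope-action dictionary of Proposition \ref{prop:actionAndSlope}.

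Concretely, I would fix a large non-characteristic slope $\nu>\mu$ and, as in the construction of ${\mathcal{R}es}$, choose admissible Hamiltonians $H^\nu$ for $M$ and $G^\nu$ for $M_\varepsilon$, both of slope $\nu$ at infinity, with $G^\nu>H^\nu$ on $\widehat M$ (which is automatic, since the slope of $G^\nu$ in the $r$-coordinate of $M$ is $\nu/\varepsilon>\nu$) and with the actions of their chords controlled by the corresponding $y$-intercepts. For $\nu$ large these Hamiltonians compute $HW^\bullet(M,L\to L^\prime_\bullet)$ and, via the no-escape lemma, $HW^\bullet(M_\varepsilon,\,L\cap M_\varepsilon\to L^\prime_\bullet\cap M_\varepsilon)$ on all action windows below $\mu$, and $\iota^{H^\nu\to G^\nu}$ represents ${\mathcal{R}es}^{}_{M\to M_\varepsilon}$. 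The datum to keep track of is the action-to-slope shift $\gamma=A-B$: since $f$ vanishes near $\partial M$ and near $\partial M_\varepsilon$, and $g_\tau$ vanishes near $\partial M_\varepsilon$ for all $\tau$, this shift is $0$ for every pair involved except $L\to L^\prime_T$ inside $M$, where $g_T\equiv T$ near $\partial M$ gives $\gamma=-T$. This asymmetry is exactly what produces the index $\mu$ in the bottom row of the diagram, the index $\mu-T$ in the top row, and the persistence morphism $\iota^{}_{(\mu-T)\to\mu}$ between them; the identification $(2,3)=(3,3)$ is the equality $L^\prime_T\cap M_\varepsilon=L^\prime_0\cap M_\varepsilon$ (the isotopy being stationary on $M_\varepsilon$, as in the situation of subsection \ref{ssec:lagrangianIsotopy}), under which the two Floer complexes over $M_\varepsilon$ coincide literally.

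Next I would apply Proposition \ref{prop:contAndContinuationCommute} to the monotone homotopy from $H^\nu$ to $G^\nu$ realizing $\iota^{H^\nu\to G^\nu}$. It yields, on homology,
\[
\iota^{H^\nu\to G^\nu}\circ\mathcal{C}ont_{\{L^\prime_\tau\}_{0}^{T}}\;=\;\mathcal{C}ont_{\{L^\prime_\tau\}_{0}^{T}}\circ\iota^{H^\nu\to G^\nu},
\]
where the left $\mathcal{C}ont$ is computed with $H^\nu$ and the right one with $G^\nu$. Rewriting both sides in the $HW^\bullet$-language via Propositions \ref{prop:actionAndSlope} and \ref{prop:movingBdryContPerModMorphism}, and inserting the $\gamma$-shifts described above, the left side becomes the composite along the top and right edges of the diagram, $\iota^{}_{(\mu-T)\to\mu}\circ{\mathcal{R}es}^{}_{M\to M_\varepsilon}\circ\mathcal{C}ont_{\{L^\prime_\tau\}_{0}^{T}}$, while the right side becomes $\mathcal{C}ont_{\{L^\prime_\tau\}_{0}^{T}}\circ{\mathcal{R}es}^{}_{M\to M_\varepsilon}$ with $\mathcal{C}ont$ now the one computed over $M_\varepsilon$. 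Hence the diagram commutes as soon as this latter map is the identity under the identification $(2,3)=(3,3)$.

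The remaining step — which I expect to be the main obstacle — is therefore to prove that $\mathcal{C}ont_{\{L^\prime_\tau\}_{0}^{T}}$ computed with a Hamiltonian suited to $M_\varepsilon$ is the identity. The mechanism is confinement: $G^\nu$ is linear near $\partial M_\varepsilon$, and $L$, $L^\prime_\tau$ are conical in a collar of $\partial M_\varepsilon$ inside $M_{[\varepsilon,1]}$ on which the isotopy is stationary, so the no-escape lemma (Lemma \ref{lem:lagNoEscape}) together with an argument as in Proposition \ref{prop:c0Cofinement} should force every Floer and moving-boundary continuation solution for $G^\nu$ to lie in $M_\varepsilon$; there the boundary condition $\widehat{L^\prime}^{}_{\!\!\beta(s)}$ is $s$-independent, so the moving-boundary equation reduces to the Floer continuation equation for the constant homotopy, whose count is the identity. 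The delicate point, absent in Proposition \ref{prop:c0Cofinement}, is that the completed Lagrangians $\widehat{L^\prime}_\tau$ genuinely move outside $M_\varepsilon$, so the confinement cannot be read off verbatim; I would handle it either by taking the collar small enough that the portion of each solution landing outside $M_\varepsilon$ stays in the stationary region, or by conjugating solutions by the Moser diffeotopy $\phi_\tau$ of subsection \ref{ssec:lagrangianIsotopy} to reduce to a genuinely fixed boundary condition. Assembling the three steps then yields the commutativity of the diagram, and hence the proposition.
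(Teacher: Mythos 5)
Your reduction (realize ${\mathcal{R}es}^{}_{M\to M_\varepsilon}$ as the continuation $\iota^{H^\nu,G^\nu}$, invoke Proposition \ref{prop:contAndContinuationCommute} for the monotone homotopy from $H^\nu$ to $G^\nu$, and then argue that $\mathcal{C}ont_{\{L^\prime_\tau\}_{0}^{T}}$ computed with the Hamiltonian $G^\nu$ adapted to $M_\varepsilon$ is the identity) is reasonable up to its last step, but that last step — which you yourself flag as the main obstacle — is a genuine gap, and neither of your proposed fixes closes it. The confinement you want does not follow from Lemma \ref{lem:lagNoEscape} or the argument of Proposition \ref{prop:c0Cofinement}: those arguments require the Lagrangian boundary conditions to be exact \emph{and stationary/conical with locally constant primitive} throughout the region one wants to exclude, whereas here the isotopy $\widehat{L^\prime}_{\beta(s)}$ moves precisely in the interior of $M_{[\varepsilon,1]}$, i.e.\ inside the cone of $M_\varepsilon$. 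The Stokes-type estimate behind the no-escape lemma then acquires boundary terms coming from the $s$-dependence of the primitive $g_{\beta(s)}$ which have no sign, so nothing prevents an index-$0$ moving-boundary solution asymptotic to chords in $M_\varepsilon$ from entering the region where the boundary genuinely moves. Shrinking the collar (your fix (a)) is irrelevant to this, since the problem is solutions travelling into the interior of $M_{[\varepsilon,1]}$, not their behaviour near $\partial M_\varepsilon$; and conjugating by the isotopy's Hamiltonian diffeotopy (your fix (b)) merely trades the moving boundary for an $s$-dependent, non-monotone Hamiltonian perturbation supported exactly in that interior region (together with a domain-dependent $J$), for which neither a maximum principle nor the claim that the resulting map is the identity is any easier, and the action bookkeeping producing the $-T$ shift has to be redone from scratch.

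For comparison, the paper never proves (and never needs) such a confinement statement, nor the assertion that $\mathcal{C}ont$ over $M_\varepsilon$ is the identity. Instead it glues the composite $\iota^{H^\mu,G^\mu}\circ\mathcal{C}ont_{\{L^\prime_\tau\}_{0}^{T}}$ into a single moving-boundary continuation map $\Psi$ for the shifted homotopy $H_{s+R}$, and then constructs a parametrized chain homotopy $\mathfrak{H}$ by rescaling the Lagrangian isotopy through $\lbrace L^\prime_{\theta\beta(\tau)}\rbrace$, $\theta\in[0,1]$: at $\theta=1$ one recovers $\Psi$, at $\theta=0$ the ordinary continuation map representing ${\mathcal{R}es}$. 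The hypothesis that the isotopy is stationary on $M_\varepsilon$ enters only through the soft statement that all generators of the $G^\mu$-complex are chords lying in $M_\varepsilon$, hence are legitimate asymptotics for every $\theta$; no $C^0$-control of the solutions themselves is required, and the compactness input is exactly the bounds of subsection \ref{ssec:energyAndC0}. If you want to salvage your decomposition, the correct move is to run this same $\theta$-interpolation with the constant Hamiltonian homotopy $H_s\equiv G^\nu$ to show that $\mathcal{C}ont^{(G^\nu)}$, composed with the window inclusion $CW^{\mu-T}\hookrightarrow CW^{\mu}$, is chain homotopic to the inclusion — i.e.\ replace the confinement argument by a parametrized-moduli-space argument — rather than attempting an a priori $C^0$ bound that is not available.
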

\begin{proof}
    Since $M_{[\eps,1]}$ is a trivial cobordism, and the Lagrangians are conical therein, it holds that $\widehat{M} = \widehat{M_\eps}$ and that also $\widehat{L} = \widehat{L\cap M_\eps}$ and $\widehat{L^\prime_\tau} = \widehat{L^\prime_\tau\cap M_\eps}$ for all $\tau$.
    
    For a $\mu$, non-characteristic for both $M$ and $M_\eps$, we choose admissible Hamiltonians $H^\mu$ and $G^\mu$ on $\widehat{M}$ and $\widehat{M_\eps}$ such that 
    \begin{align*}
        H\left( CF\left( H^\mu, \widehat{L} \to \widehat{L^\prime_0} \right) \right) = HW\left( H^\mu, L \to L^\prime_0 \right) = HW^\mu \left( M, L \to L^\prime_0 \right), \\
        H\left( CF\left( G^\mu, \widehat{L} \to \widehat{L^\prime_0} \right) \right) = HW\left( G^\mu, L \to L^\prime_0 \right) = HW^\mu \left( M_\eps, L \to L^\prime_0 \right). 
    \end{align*}

The left equalities hold by definition, and the right ones are by proposition \ref{prop:actionAndSlope}, together with the fact that the slope $\mu$ is non-characteristic. Also note that $\widehat{M} = \widehat{M_\eps}$

By the admissibility of $H^\mu$ and $G^\mu$, they are of the form $\mu r + b_{H^\mu}$ and $\mu r^\prime + b_{G^\mu}$, respectively, where $r$ and $r^\prime$ are radial coordinates near $\partial M$ and $\partial M_\eps$, respectively, and $b_{H^\mu}$ and $b_{G^\mu}$ are some negative constants.

The constants $b_{H^\mu}$ and $b_{G^\mu}$ can be chosen to be small enough in their absolute value, such tht the full complex equals the $\mu$ action truncated one, namely:
\[CF\left( H^\mu, \widehat{L} \to \widehat{L^\prime_0} \right) = CF^\mu\left( H^\mu, \widehat{L} \to \widehat{L^\prime_0}\right),\] 
and 
\[CF\left( G^\mu, \widehat{L} \to \widehat{L^\prime_0} \right) = CF^\mu\left( G^\mu, \widehat{L} \to \widehat{L^\prime_0}\right).\]
Indeed, the generators appear near the boundary, the spectrum is discrete and the action is given by the $y$-intercept of the tangent to the radial Hamiltonian, at points where the slope belongs to the spectrum. Since $\mu$ is noncharacteristic, namely, does not belong to the spectrum, it follows that for constants $b_{H^\mu}$ and $b_{G^\mu}$, small enough in absolute value, the complexes $CF^\nu$ stabilize, as $\nu \to \mu$.

We now consider the following diagram of complexes.
\begin{equation}\label{eqn:commDiagSquareContRes2}
%
\begin{tikzcd}
{CW^{\mu}\left(H^\mu,\widehat{L} \to \widehat{L^\prime_0}\right)} \arrow[rr, "\mathcal{C}ont_{\{L^\prime_\tau\}_{0}^{T}}"] \arrow[d, "{\iota^{H^\mu,G^\mu}}"] &  & {CW^{\mu-T}\left(H^\mu,\widehat{L} \to \widehat{L^\prime_T}\right)} \arrow[d, "{\iota^{H^\mu,G^\mu}}"]                      \\
{CW^{\mu}\left({G}^\mu,\widehat{L} \to \widehat{L^\prime_0}\right)} \arrow[d, equal]                                                            &  & {CW^{\mu-T}\left({G}^\mu,\widehat{L} \to \widehat{L^\prime_0}\right)} \arrow[d, equal] \arrow[ll, hook']      \\
{CW^{\mu}\left({G}^\mu,\widehat{L \cap M_\varepsilon}\to \widehat{L^\prime_0\cap M_\varepsilon}\right)}                                                       &  & {CW^{\mu-T}\left({G}^\mu,\widehat{L \cap M_\varepsilon}\to \widehat{L^\prime_0\cap M_\varepsilon}\right)} \arrow[ll, hook']
\end{tikzcd}
\end{equation}
\end{proof}

Recall that in defining $\iota^{H^\mu,G^\mu}$ we choose a generic increasing homotopy through admissible Hamiltonians, $\left(H_s\right)_{s\in\mathbb{R}}$, with $\partial_s H_s \ge 0$ and $H_s = H^\mu$ near $s=-\infty$, and $H_s = G^\mu$ near $s=\infty$, and count the associated continuation solutions. The map $\mathcal{C}ont_{\{L^\prime_\tau\}_{0}^{T}}$ as in \ref{ssec:continuationMovingBdry} involves a choice of a smooth function $\beta(\tau) \colon \mathbb{R} \to \mathbb{R}$ such that $\beta \equiv 0$ for $\tau\in (-\infty,0]$, $\beta \equiv T$ for $\tau \in [1,\infty)$, and in $[0,1]$, $\beta(\tau)$ is monotone increasing, mapping the interval $[0,1]$ onto $[0,T]$. The isotopy $\lbrace L^\prime_{\beta(\tau)} \rbrace_{\tau \in \mathbb{R}}$, is given by $L^\prime_{\beta(\tau)} = \phi^\tau_{G_\tau} \left( L^\prime_0 \right)$. For a Hamiltonian $G_\tau \colon M\times [0,1] \to \mathbb{R}$, which is assumed to be supported away from the boundary of $M$. Note that $G^\mu$ and $G_\tau$ are different Hamiltonians.

We will build a chain homotopy that makes the upper square commute in homology.

By gluing, (see e.g. \cite[Proposition 19.5.1]{oh2015symplectic}, for an analogous result in the context of continuation maps in Hamiltonian Floer homology), the composition $\iota^{H^\mu,G^\mu} \circ \mathcal{C}ont_{\{L^\prime_\tau\}_{0}^{T}}$ equals a map $\Psi$ defined as follows: For a choice of some $R>0$ large enough, which also can be taken large enough so that $H_{s+R} = G^\mu$ for $s \ge 0$, gluing provides us with a map $\Psi$ given by 
\[
    \Psi x \quad = \sum_{x^\prime \in {\scalebox{0.9}{$ \mathcal{P}$}}^H_{\widehat{L}\to \widehat{L^\prime_T}}} \#_2\mathcal{M}^0\left(x,x^\prime, H_{s+R}, J, \lbrace \widehat{L^\prime}^{}_{\!\!\beta(\tau)} \rbrace_{\tau\in\mathbb{R}} \right) \cdot x^\prime,
\]
where, for a given homotopy of Hamiltonians $H_s$, the space  \[\mathcal{M}^0\left(\gamma_-,\gamma_+, H_{s}, J, \lbrace \widehat{L^\prime}^{}_{\!\!\beta(\tau)} \rbrace_{\tau\in\mathbb{R}} \right)\] was defined by equations (\ref{eqn:continuationMovingBoundaryNotation}) and (\ref{eqn:continuationMovingBoundary})

Note that for every $\theta>0$, the Lagrangian isotopy $\lbrace L^\prime_{\theta\beta(\tau)} \rbrace_{\tau \in \mathbb{R}}$, is given by $L^\prime_{\theta\beta(\tau)} = \phi^{\beta^{-1}(\theta\beta(\tau))}_{G_{\beta^{-1}(\theta\beta(\tau))}} \left( L^\prime_0 \right)$, hence, is also generated by the flow of a Hamiltonian.
For $\theta \in [0,1]$, the isotopies $\lbrace L^\prime_{\theta\beta(\tau)} \rbrace_{\tau \in \mathbb{R}}$ smoothly interpolate between the constant isotopy $\lbrace L^\prime_{0} \rbrace_{\tau \in \mathbb{R}}$ and $\lbrace L^\prime_{\beta(\tau)} \rbrace_{\tau \in \mathbb{R}}$.

We define the space  $\mathcal{M}^{-1}\left(\gamma_-,\gamma_+, H_{s}, J, \lbrace \widehat{L^\prime}^{}_{\!\!\theta\beta(\tau)} \rbrace_{\tau\in\mathbb{R},\theta\in[0,1]} \right)$ to be the moduli space of pairs $(u,\theta)$ where $u$ is a Floer continuation solution with moving boundary conditions, of Fredholm index $-1$ (i.e. as a pair $(u,\theta)$ it has index $0$), connecting $\gamma_-$ and $\gamma_+$, solving:
\begin{equation}\label{eqn:chainhomotopyMovingBoundary2}
    \begin{aligned}
        &u\colon \mathbb{R}\times [0,1] \to \widehat{M}, \\
        &\overline{\partial}^{}_{J,H_{s,\theta}}u := \partial_s u + J\left( \partial_t u - X_{H_{s}}\left(u\right) \right) = 0, \\
        &\lim_{s\to\pm \infty} u(s,t) = \gamma_{\pm}(t), \\
        &u(s,0)\in \widehat{L} \text{ and }  u(s,1)\in \widehat{L^\prime}^{}_{\!\!\theta\beta(s)}.
    \end{aligned}  
\end{equation}

Note that for the space to be well defined, $\gamma_+$ must belong to $\mathcal{P}_{L\to L^\prime_t}$ for all $t\in[0,T]$. 

Set 
\[
    \mathfrak{H}\, x \quad = \sum_{x^\prime \in \mathcal{P}^H_{\widehat{L}\to \widehat{L^\prime_T}}} \#_2\mathcal{M}^{-1}\left(\gamma_-,\gamma_+, H_{s+R}, J, \lbrace \widehat{L^\prime}^{}_{\!\!\theta\beta(\tau)} \rbrace_{\tau\in\mathbb{R}} \right) \cdot x^\prime.
\]
Note that all the generators of the complex for $G^\mu$ are chords lying in $M_\eps$, where $L^\prime_t \cap M_\eps = L^\prime_0 \cap M_\eps$ for all $t\in[0,1]$. Hence, $\mathfrak{H}$ is well defined.

Now define
 $\overline{\mathcal{M}^{0}\left(\gamma_-,\gamma_+, H_{s+R}, J, \lbrace \widehat{L^\prime}^{}_{\!\!\theta\beta(\tau)} \rbrace_{\tau\in\mathbb{R}} \right)}$, which is the closure of the moduli space of index $1$ pairs $(u,\theta)$, defined analogously to the moduli space above. I.e. $u$ is an index $0$ solution to (\ref{eqn:chainhomotopyMovingBoundary2}) with a given $\theta$. Let us now describe the points in the zero-dimensional boundary. They are of one of several types:

  \begin{enumerate}
        \item Pairs $(v,w)$ where either:
        \begin{enumerate}
            \item $v$ is a Floer solution with asymptotes $\gamma_-$ and $\gamma_0$, \\for $CW^{\mu}({G}^\mu,\widehat{L} \to \widehat{L^\prime_0})$, and $w=(u,\theta)$ is a solution to Equation (\ref{eqn:chainhomotopyMovingBoundary2}), with asymptotes $\gamma_0$ and $\gamma_+$, for $\gamma_-,\gamma_0,\gamma_+$ some Hamiltonian chords.
            \item $v=(u,\theta)$ is a solution to Equation (\ref{eqn:chainhomotopyMovingBoundary2}), with asymptotes $\gamma_-$ and $\gamma_0$, and $w$ is a Floer solution with asymptotes $\gamma_0$ and $\gamma_+$, \\for $CW^{\mu}({G}^\mu,\widehat{L} \to \widehat{L^\prime_0})$. for $\gamma_-,\gamma_0,\gamma_+$ some Hamiltonian chords.
        \end{enumerate}
        \item Solutions $u$ solving Equation (\ref{eqn:continuationMovingBoundary}) with parameters corresponding to $\Psi$.
        \item Continuation solutions $u$, for the homotopy $H_{s+R}$. 
    \end{enumerate}
    Note that counting the solutions from the last item defines in homology the same map $\iota^{H^\mu, G^\mu}$, due to the independence of the choice of homotopy between $H^\mu$ and $G^\mu$.
    
 The boundary relation thus implies that $\mathfrak{H}$ is a chain homotopy, which proves that the diagram (\ref{eqn:commDiagSquareContRes2}) commutes in homology.

\begin{proposition}\label{prop:iotaFactorization}
    In the setting of Proposition \ref{prop:conicalCommDiagram}, for every $\mu \ge 0$, the morphism $\iota_{\mu \to \frac{1}{\varepsilon}\mu}$ factors through the morphism $\iota^{}_{\frac{1}{\varepsilon}(\mu-T)\to\frac{1}{\varepsilon}\mu}$, namely there exists a morphism $\psi$ such that the following diagram commutes:
    \begin{equation*}
        \begin{tikzcd}
        {HW^{\mu}\left(M,L\to L^\prime\right)} \arrow[dd, "\iota_{\mu \to \frac{1}{\varepsilon}\mu}"'] \arrow[rrd, "\psi"] &  &                                                                                                                                                    \\
                                                                                                                           &  & {HW^{\frac{1}{\varepsilon}(\mu-T)}\left(M,L\to L^\prime\right)} \arrow[lld, "\iota^{}_{\frac{1}{\varepsilon}(\mu-T)\to\frac{1}{\varepsilon}\mu}"'] \\
        {HW^{\frac{1}{\varepsilon}\mu}\left(M,L\to L^\prime\right)}                                                        &  &                                                                                                                                                   
        \end{tikzcd}
    \end{equation*}
\end{proposition}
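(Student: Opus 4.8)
The plan is to obtain the factorization purely by diagram chasing, pasting the large commutative square of Proposition~\ref{prop:conicalCommDiagram} onto the two compatibility statements of Lemma~\ref{lem:viterboRestriction}; there is no new analytic input. First I would read off from Proposition~\ref{prop:conicalCommDiagram}, by comparing the two composites from $HW^{\mu}(M,L\to L^\prime_0)$ to $HW^{\mu}(M_\varepsilon,L\cap M_\varepsilon\to L^\prime_0\cap M_\varepsilon)$, the identity
\[
\mathcal{R}es_{M\to M_\varepsilon} \;=\; \iota^{}_{(\mu-T)\to\mu}\circ\mathcal{R}es_{M\to M_\varepsilon}\circ\mathcal{C}ont_{\{L^\prime_\tau\}_{0}^{T}},
\]
where the action drop by $T$ along $\mathcal{C}ont_{\{L^\prime_\tau\}_{0}^{T}}$ is the one supplied by Proposition~\ref{prop:movingBdryContPerModMorphism} (here $\beta=B_T-B_0=T$ because $g_\tau\equiv 0$ near $\partial M_\varepsilon$ and $g_\tau\equiv\tau$ near $\partial M$), and where I use the ``equal'' arrow of that diagram to identify $HW^{\mu-T}(M_\varepsilon,L\cap M_\varepsilon\to L^\prime_T\cap M_\varepsilon)$ with $HW^{\mu-T}(M_\varepsilon,L\cap M_\varepsilon\to L^\prime_0\cap M_\varepsilon)$ (legitimate since $L^\prime_\tau\cap M_\varepsilon=L^\prime_0\cap M_\varepsilon$ for all $\tau$, and $L^\prime_0=L^\prime$).

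Next I would post-compose both sides of this identity with the rescaling isomorphism $\varphi_\mu$ from Lemma~\ref{lem:viterboRestriction}. Item~(2) of that lemma turns the left-hand side into $\iota^{}_{\mu\to\frac{1}{\varepsilon}\mu}$, and the commuting square of Item~(1), applied with the pair of parameters $\mu-T\le\mu$, gives $\varphi_\mu\circ\iota^{}_{(\mu-T)\to\mu}=\iota^{}_{\frac{1}{\varepsilon}(\mu-T)\to\frac{1}{\varepsilon}\mu}\circ\varphi_{\mu-T}$. Combining these yields
\[
\iota^{}_{\mu\to\frac{1}{\varepsilon}\mu}
=\iota^{}_{\frac{1}{\varepsilon}(\mu-T)\to\frac{1}{\varepsilon}\mu}\circ\Bigl(\varphi_{\mu-T}\circ\mathcal{R}es_{M\to M_\varepsilon}\circ\mathcal{C}ont_{\{L^\prime_\tau\}_{0}^{T}}\Bigr),
\]
so that setting $\psi:=\varphi_{\mu-T}\circ\mathcal{R}es_{M\to M_\varepsilon}\circ\mathcal{C}ont_{\{L^\prime_\tau\}_{0}^{T}}\colon HW^{\mu}(M,L\to L^\prime)\to HW^{\frac{1}{\varepsilon}(\mu-T)}(M,L\to L^\prime)$ makes the asserted triangle commute. (The target is indeed $HW^{\frac1\varepsilon(\mu-T)}(M,L\to L^\prime)$, using $L^\prime_0=L^\prime$ and the identification of the $L^\prime_T$- and $L^\prime_0$-complexes over $M_\varepsilon$ noted above.)

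I do not expect a genuine obstacle here: all the substantive work — well-definedness and action-compatibility of $\mathcal{C}ont$, of $\mathcal{R}es$, the relation $\varphi\circ\mathcal{R}es=\iota$, and the $\varphi$-equivariance of the persistence maps — is already carried out in Propositions~\ref{prop:movingBdryContPerModMorphism} and \ref{prop:conicalCommDiagram} and in Lemma~\ref{lem:viterboRestriction}. The only points requiring care are purely bookkeeping: tracking the action-level shift by $T$ through the diagram so that the indices $\mu-T$ and $\frac1\varepsilon(\mu-T)$ land correctly, and the degenerate range $\mu<T$, in which the intermediate group $HW^{\mu-T}$ vanishes (there are no Hamiltonian chords of negative action since $L\cap L^\prime=\emptyset$) and the identity holds trivially with $\psi=0$.
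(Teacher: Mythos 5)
Your proposal is correct and follows essentially the same route as the paper: you paste the commuting square of Proposition \ref{prop:conicalCommDiagram} with the two items of Lemma \ref{lem:viterboRestriction} (Item 1 giving the $\varphi$-equivariance of the persistence maps, Item 2 giving $\varphi_\mu\circ\mathcal{R}es=\iota_{\mu\to\frac{1}{\varepsilon}\mu}$), and your $\psi=\varphi_{\mu-T}\circ\mathcal{R}es_{M\to M_\varepsilon}\circ\mathcal{C}ont_{\{L^\prime_\tau\}_{0}^{T}}$ is precisely the map taken in the paper. Your additional bookkeeping remarks (the shift $\beta=T$ via Proposition \ref{prop:movingBdryContPerModMorphism} and the vacuous case $\mu<T$) are consistent with, and slightly more explicit than, the paper's treatment.
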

\begin{proof}
    We claim that the Following diagram commutes:
    \begin{equation*}
        \begin{tikzcd}
        {HW^{\mu}\left(M,L\to L^\prime_0\right)} \arrow[dd, "{\mathcal{R}es}^{}_{M\to M_\varepsilon}"] \arrow[rr, "\mathcal{C}ont_{\{L^\prime_\tau\}_{0}^{T}}"] \arrow[ddd, "\iota_{\mu \to \frac{1}{\varepsilon}\mu}"', bend right=88] &  & {HW^{\mu-T}\left(M,L\to L^\prime_T\right)} \arrow[d, "{\mathcal{R}es}^{}_{M\to M_\varepsilon}"]                                                                                                    \\
                                                                                                                                                                                                                                                                             &  & {HW^{\mu-T}\left(M_\varepsilon,L\cap M_\varepsilon\to L^\prime_T\cap M_\varepsilon\right)}                                                                                                            \\
        {HW^{\mu}\left(M_\varepsilon,L\cap M_\varepsilon\to L^\prime_0\cap M_\varepsilon\right)} \arrow[d, "\rotatebox{90}{$\sim$}"]                                                                                                                                         &  & {HW^{\mu-T}\left(M_\varepsilon,L\cap M_\varepsilon\to L^\prime_0\cap M_\varepsilon\right)} \arrow[ll, "\iota^{}_{(\mu-T)\to\mu}"'] \arrow[d, "\rotatebox{90}{$\sim$}"] \arrow[u, equal] \\
        {HW^{\frac{1}{\varepsilon}\mu}\left(M,L\to L^\prime_0\right)}                                                                                                                                                                                                        &  & {HW^{\frac{1}{\varepsilon}(\mu-T)}\left(M,L\to L^\prime_0\right)} \arrow[ll, "\iota^{}_{\frac{1}{\varepsilon}(\mu-T)\to\frac{1}{\varepsilon}\mu}"'].                                                  
        \end{tikzcd}  
    \end{equation*}
    Indeed, the upper square commutes by Proposition \ref{prop:conicalCommDiagram}.
    The lower square commutes by the first item of Lemma \ref{lem:viterboRestriction}. The second item in that same lemma states that the composition of the two vertical morphisms on the left is indeed $\iota^{}_{\mu\to\frac{1}{\eps}\mu}$.
    
    The map $\psi$ is obtained as the composition of ${\mathcal{C}ont}_{\{L^\prime_\tau\}_{0}^{T}}$,  ${\mathcal{R}es}^{}_{M\to M_\varepsilon}$ and the isomorphism $\varphi_{\mu-T}$ which appears in the diagram as the unlabeled vertical isomorphism on the right.
    \[\psi := \varphi_{\mu-T} \circ {\mathcal{R}es}^{}_{M\to M_\varepsilon} \circ {\mathcal{C}ont}_{\{L^\prime_\tau\}_{0}^{T}}\]
\end{proof}
   \begin{figure}[H]
    	\centering
    	\includegraphics[scale=0.45]{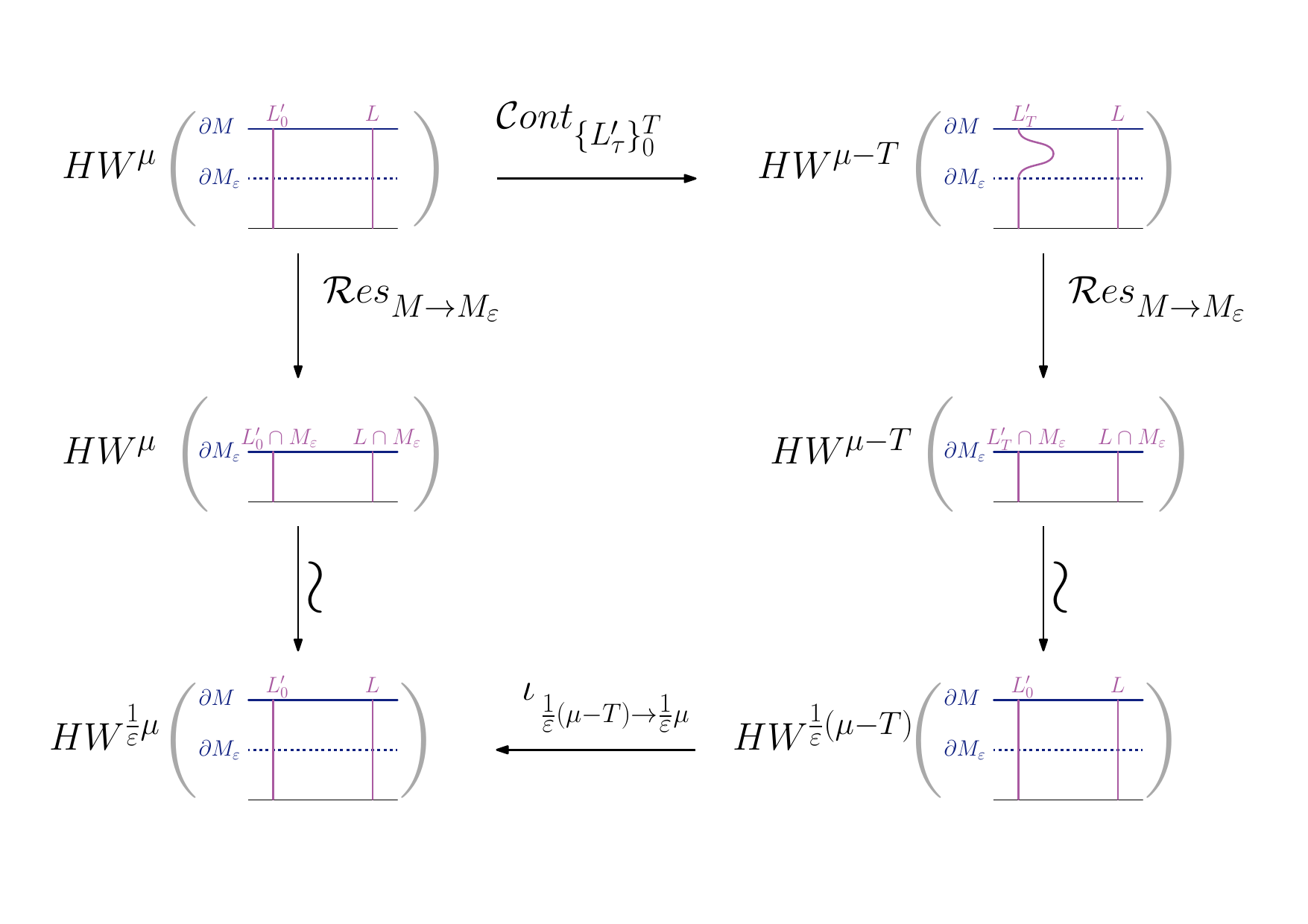}
    	\caption{\small{A pictorial representation of the diagram in the proof of Proposition \ref{prop:iotaFactorization}.
    	}}
    	\label{fig:proofDiagram}
    \end{figure}
\subsection{Proof of Theorem \ref{thm:specialCaseMainTheorem}, a Lower Bound on \texorpdfstring{$\operatorname{pb}^+$}{pb+}.}
We are finally ready to prove Theorem \ref{thm:specialCaseMainTheorem}.
\begin{proof}[Proof of Theorem \ref{thm:specialCaseMainTheorem}.]
    Suppose that the barcode of $\wfhBold{M}{L}{L^\prime}$ contains a bar of the form $\left (\mu, C\mu\right]$ with $\mu > 0$, $C>1$. with $\frac {1}{\varepsilon} \le C$.
    
    We first reduce to the case $\frac{1}{\eps} < C$, using the semi-continuity of $\operatorname{pb}^+$. The rest of the proof relies on the theory of filtered wrapped Floer homology. 

    \textbf{Reduction step:} Assuming the theorem has been proved for all $\varepsilon$ such that $\frac{1}{\eps} < C$, we will show how the result follows for $\eps$ such that $\frac{1}{\eps} = C$.
    Choose a sequence $\eps_n > \eps$ converging to $\eps$.
    Then the sets $\widehat{L}\cap M_{[\eps_n,1]}$, $\widehat{L^\prime}\cap M_{[\eps_n,1]}$, $\partial M_{\eps_n}$, and $\partial M$, converge in the Hausdorff distance to $\widehat{L}\cap M_{[\eps,1]}$, $\widehat{L^\prime}\cap M_{[\eps,1]}$, $\partial M_{\eps}$, and $\partial M$, respectively.
    Hence, by the semi-continuity of $\operatorname{pb}^+$:
    \begin{multline*}  
        \operatorname{pb}_{\widehat {M}}^+\left(\widehat{L}\cap M_{[\eps,1]},\widehat{L^\prime}\cap M_{[\eps,1]}, \partial M_\eps, \partial M \right) \ge \\
        \ge \limsup_{n\to\infty} \operatorname{pb}_{\widehat {M}}^+\left(\widehat{L}\cap M_{[\eps_n,1]},\widehat{L^\prime}\cap M_{[\eps_n,1]}, \partial M_{\eps_n}, \partial M \right) \ge \\
        \ge \lim_{n\to \infty} \frac {1}{\mu \left(1-\eps_n\right)} = \frac {1}{\mu \left(1-\eps\right)}.
    \end{multline*}

    \textbf{Floer theoretical step:}
    From this point to the end of the proof we assume that $\eps$ satisfies $\frac {1}{\varepsilon} < C$. Set $\nu_\delta = \mu+\delta$ for $\delta>0$ small enough so that $\frac {1}{\varepsilon}\left(\mu+\delta\right) < C$.
    Construct the Lagrangian isotopy from the admissible pairs $(F,G)$ as described in Section \ref{ssec:lagrangianIsotopy}.
    We apply Proposition \ref{prop:iotaFactorization} for $\nu_\delta$, obtaining the commuting diagram:
    \begin{equation*}
        \begin{tikzcd}
        {HW^{\mu+\delta}\left(M,L\to L^\prime\right)} \arrow[dd, "\iota_{\left(\mu + \delta\right) \to \frac{1}{\varepsilon}\left(\mu + \delta\right)}"'] \arrow[rrd, "\psi"] &  &                                                                                                                                                                                        \\
                                                                                                                                                                              &  & {HW^{\frac{1}{\varepsilon}(\mu+\delta-T)}\left(M,L\to L^\prime\right)} \arrow[lld, "\iota^{}_{\frac{1}{\varepsilon}(\mu+\delta-T)\to\frac{1}{\varepsilon}\left(\mu + \delta\right)}"'] \\
        {HW^{\frac{1}{\varepsilon}\left(\mu+\delta\right)}\left(M,L\to L^\prime\right)}                                                                                       &  &     .                                                                                                                                                                                  
        \end{tikzcd}
    \end{equation*}
    Note that the way the primitives of the Liouville form evolve when restricted to $L^\prime_\tau$ is described in Proposition \ref{prop:lagrangianIsotopyProperties}, and this is what yields the shift by $-T$ of the action filtration by the map $\psi$.
    Since the barcode of $\wfhBold{M}{L}{L^\prime}$ contains a bar of the form $\left (\mu, C\mu\right]$, it follows from Lemma \ref{lem:pmNonZeroElement} in Section \ref{sec:perModules} that there exists an element $x\in HW^{\mu+\delta}\left(M,L\to L^\prime\right)$ such that the following hold:
    \begin{itemize}
        \item $\iota_{\left(\mu + \delta\right) \to s}\,x \neq 0$ for all $\mu + \delta\le s\le C\mu$.
        \item For all $\mu+\delta \le s \le C\mu$ and $\sigma\le \mu$, 
        \[\iota_{\left(\mu+\delta\right)\to s}\,x \not\in \operatorname{im}\iota_{\sigma\to s}.\]
    \end{itemize}

    Pick such $x$. By the commutative diagram one has 
    \[\iota_{\left(\mu+\delta\right)\to \frac{1}{\eps}\left(\mu + \delta\right)}\,x = \iota_{\frac{1}{\eps}\left(\mu + \delta - T\right) \to \frac{1}{\eps}\left(\mu + \delta\right)}\,\psi(x).\]
    Setting $s=\frac{1}{\eps}(\mu + \delta)$ and $\sigma = \frac{1}{\eps}(\mu +\delta - T)$, in the second item above, it follows that:
    \begin{equation}\label{eqn:muInequality}
        \mu < \frac{1}{\eps}\left(\mu + \delta - T\right),
    \end{equation}
    hence
    \[
    \eps\mu < \mu + \delta - T,
    \]
    so
    \[
    T < \left(1-\eps\right)\mu + \delta.
    \]
    Since the above equation holds for all $\delta>0$ we obtain
    \[
    T \le \left(1-\eps\right)\mu.
    \]
    An intutaive explanation for Equation \ref{eqn:muInequality} is illustrated in figure \ref{fig:muInequality}.
    \begin{figure}[H]
    	\centering
    	\includegraphics[scale=0.75]{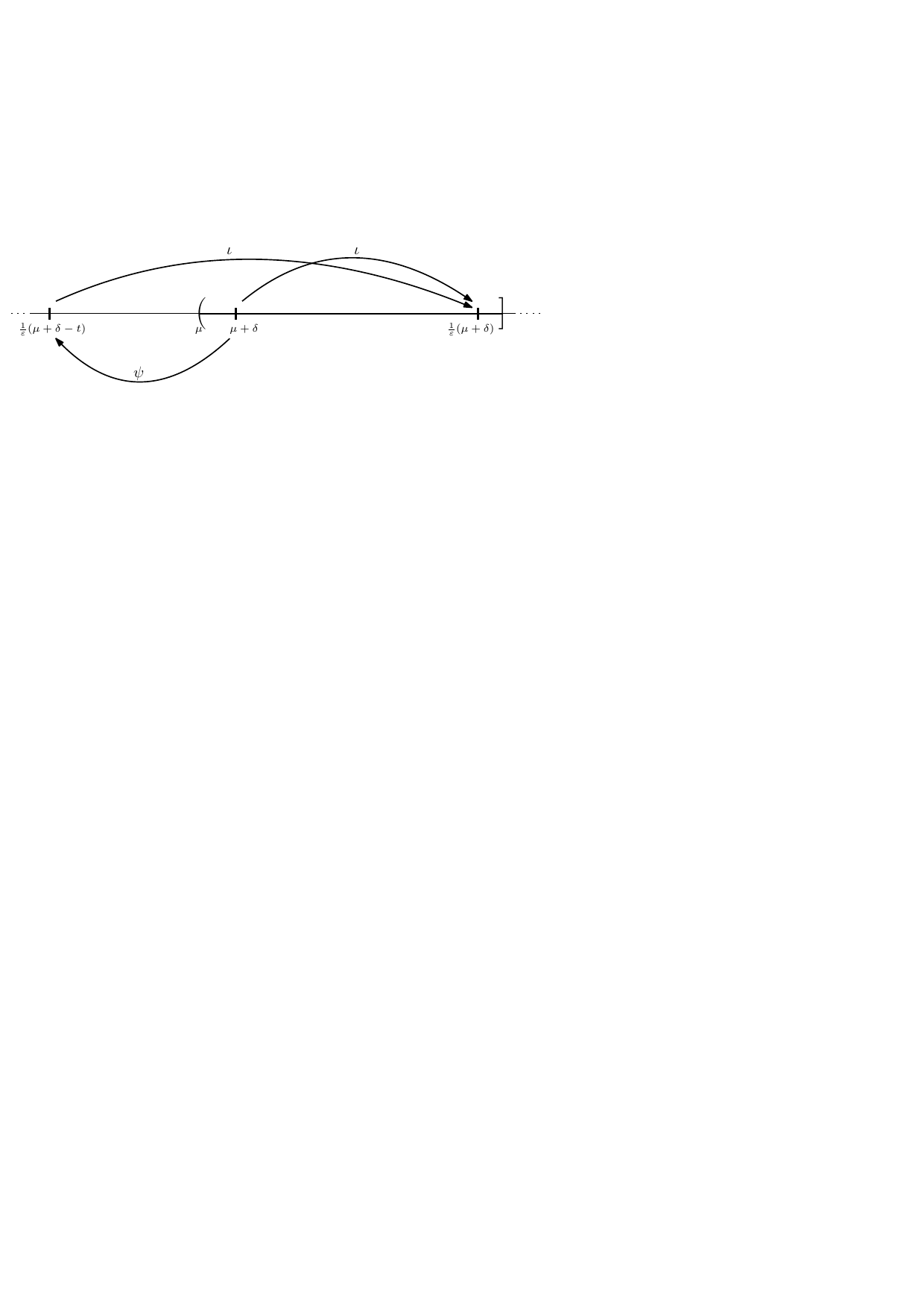}
    	\caption{\small{The map $\psi$ cannot map to the left of $\mu$ or we would have a factorizaiton of a non-zero morphism via the zero morphism.
    	}}
    	\label{fig:muInequality}
    \end{figure}
    
    We have obtained a bound on the interval length of induced deformations of the symplectic form, and therefore by the equality in Claim \ref{clm:pbConvenientForm} and by Lemma \ref{lem:pb4DeformationBound} we obtain
    \begin{multline*}
        \operatorname{pb}_{\widehat {M}}^+\left(L \cap  M_{[\varepsilon,1]}, L^\prime \cap  M_{[\varepsilon,1]}, \partial M_\eps, \partial M \right) = \\
        =
        \operatorname{pb}^+_{\widehat{M}} \left(\widehat{L}, \widehat{L^\prime},  \partial M_\varepsilon, M_{[1,\infty)} \right) \ge \frac{1}{\left(1-\eps\right)\mu},
    \end{multline*}
    which is the required bound.
\end{proof}

\section{Cotangent Bundles and the Proof of Theorem  \ref{thm:cotangentInterlinking}}\label{sec:ctgtBundles}
In this section we prove the bounds on $\operatorname{pb^+}$ of configurations involving cotangent fibers in cotangent bundles. Bounds from which Theorem \ref{thm:cotangentInterlinking} follows. The proof is based on a computation of wrapped Floer homology for two cotangent fibers. We prove:

\begin{theorem}\label{thm:cotBundleFibers}
Let $(N,g)$ be a closed Riemannian manifold. Denote by $S_r^*N$ the cosphere bundle of radius $r$ inside $T^*N$. Pick $x,y\in N$, two points of distance $d$. Then, denoting by $T^*_q N$ the cotangent fiber over the correspoinding point $q$, we have that for all $0<a<b$:
\begin{enumerate}
    \item 
    \hfill $\displaystyle
    \operatorname{pb}_{T^*\!N}^+\left(T^*_x N,\, T^*_y N,\, S^*_a N,\, S^*_b N\right) \ge \frac{1}{d\left(b-a\right)}. \hfill    $
    \item \hfill $\displaystyle
    \operatorname{pb}_{T^*\!N}^+\left(T^*_x N,\, T^*_y N,\, 0^{}_{T^*\!N},\, S^*_a N\right) \ge \frac{1}{da}, \hfill$
    
    where $0^{}_{T^*\!N}$ denotes the zero section in $T^*\!N$.
\end{enumerate}
In particular, 
\begin{enumerate}
    \item 
    The pair $\left(S^*_a N, S^*_b N\right)$ autonomously $d\left(b-a\right)$-interlinks the pair $\left(T^*_x N, T^*_y N\right)$, and vice versa.
    \item 
    The pair $\left(0^{}_{T^*\!N}, S^*_a\right)$ autonomously $da$-interlinks the pair $\left(T^*_x N, T^*_y N\right)$, and vice versa.
\end{enumerate}
\end{theorem}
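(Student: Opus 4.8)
The plan is to derive Theorem \ref{thm:cotBundleFibers} from Theorem \ref{thm:mainTheorem} applied to the Liouville domain $M=D^*N$, the unit codisk bundle of $(N,g)$, together with the known computation of wrapped Floer homology of a pair of cotangent fibers. Here $\widehat M=T^*N$; the contact boundary $\Sigma=\partial M$ is the unit cosphere bundle $S^*N$, whose Reeb flow is the geodesic flow; and $\operatorname{core}(M)$ is the zero section $0_{T^*N}$. Identifying the Liouville collar coordinate $r$ with the fiberwise norm $|p|_g$, one has $M_r=D^*_rN$, $\partial M_r=S^*_rN$, and $M_{[a,b]}=\{a\le|p|_g\le b\}$. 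The Lagrangians are $L=T^*_xN\cap D^*N$ and $L'=T^*_yN\cap D^*N$: they are disjoint (as $x\neq y$), exact with $\lambda|_{T^*_qN}=0$ so that $f=g\equiv 0$ are admissible primitives, and they are conical everywhere off the zero section; in particular $\widehat L=T^*_xN$ and $\widehat{L'}=T^*_yN$ are conical in every $M_{[a,b]}$ with $a>0$ and in $T^*N\setminus 0_{T^*N}$, so the hypotheses of both items of Theorem \ref{thm:mainTheorem} are satisfied once the right bar is produced.

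The key input is the barcode of $\wfhBold{D^*N}{T^*_xN\cap D^*N}{T^*_yN\cap D^*N}$. Reeb chords of $(S^*N,\partial L\to\partial L')$ are precisely unit-speed geodesics of $(N,g)$ from $x$ to $y$, with chord length equal to geodesic length, so $\min\mathcal{S}(S^*N,\partial L\to\partial L')=\operatorname{dist}(x,y)=:d$. Using Proposition \ref{prop:actionAndSlope} (here the quantity $\gamma=A-B=0$, so the action filtration is the slope filtration) together with the Abbondandolo–Schwarz / Abouzaid type identification of $HW^a(D^*N,T^*_xN\to T^*_yN)$ with the $\mathbb{Z}/2$-homology of the space of paths in $N$ from $x$ to $y$ of length $<a$ — compatibly with the persistence morphisms — I would read off that $HW^a=0$ for $a\le d$, while for $a>d$ the path component of a minimizing geodesic contributes a class in $H_0$ that is born at $a=d$ and never dies (path components do not merge under the length filtration and $H_0$-classes are never boundaries). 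Hence the barcode contains a bar of the form $(d,\infty)$, and Theorem \ref{thm:mainTheorem} applies with $\mu=d$, $C=\infty$. To avoid delicate normalization bookkeeping one can use only the extremes: $HW^a=0$ for $a<d$ by the spectrum computation, and $HW^\infty=HW(T^*N,T^*_xN\to T^*_yN)$ is isomorphic to the homology of the full path space, hence nonzero in degree $0$ with the minimizing geodesic generating a summand, so its class survives in the colimit and forces a bar starting at $d$ and running to $+\infty$.

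Granting this bar, Item 1 of Theorem \ref{thm:mainTheorem} (the condition $b/a\le C$ being vacuous since $C=\infty$) gives, for all $0<a<b$,
\[
\operatorname{pb}^+_{T^*N}\!\left(T^*_xN\cap M_{[a,b]},\,T^*_yN\cap M_{[a,b]},\,S^*_aN,\,S^*_bN\right)\ \ge\ \frac{1}{d(b-a)},
\]
while Item 2 (again using $C=\infty$), applied with radius $a$, gives for every $a>0$
\[
\operatorname{pb}^+_{T^*N}\!\left(T^*_xN\cap D^*_aN,\,T^*_yN\cap D^*_aN,\,0_{T^*N},\,S^*_aN\right)\ \ge\ \frac{1}{da}.
\]
To pass from the truncated fibers to the full fibers $T^*_xN,T^*_yN$, I would invoke monotonicity of $\operatorname{pb}^+$ under enlargement of the sets of a quadruple — immediate from the definition, since enlarging any set only shrinks the admissible family $\mathcal{G}$: as $T^*_qN\supseteq T^*_qN\cap M_{[a,b]}$ and $T^*_qN\supseteq T^*_qN\cap D^*_aN$ while the two separating sets are untouched, $\operatorname{pb}^+$ can only increase, which yields the two displayed inequalities of Theorem \ref{thm:cotBundleFibers}. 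The interlinking statements then follow from Theorem \ref{thm:pb4AndInterlinking}: $\operatorname{pb}^+$ is positive and automatically finite, so writing it as $1/\kappa'$ with $\kappa'\le d(b-a)$ (resp. $\kappa'\le da$) shows that $(S^*_aN,S^*_bN)$ (resp. $(0_{T^*N},S^*_aN)$) autonomously $d(b-a)$- (resp. $da$-) interlinks $(T^*_xN,T^*_yN)$; the ``vice versa'' assertions come from the anti-symmetry of $\operatorname{pb}^+_4$ exactly as in the remark following Theorem \ref{thm:pb4AndInterlinking}, with the order in the cosphere-bundle pair reversed.

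The hard part is the second paragraph: making precise the filtered identification of the wrapped Floer homology of two cotangent fibers with the length-filtered homology of the path space, and extracting from it not merely that $HW^a\neq 0$ above $d$ but that the relevant class generates a bar extending all the way to $+\infty$. Once that bar is in hand, everything else is formal — a single bar feeds both items of Theorem \ref{thm:mainTheorem}, and monotonicity of $\operatorname{pb}^+$ together with Theorem \ref{thm:pb4AndInterlinking} finishes the proof.
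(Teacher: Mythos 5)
Your route is the paper's route: identify the filtered wrapped Floer homology of two cotangent fibers with length/energy-filtered path-space homology, extract a bar $(d,\infty)$, feed it into both items of Theorem \ref{thm:mainTheorem}, and conclude via Theorem \ref{thm:pb4AndInterlinking} and anti-symmetry. The one genuine gap is that you apply the filtered identification (your ``Abbondandolo--Schwarz / Abouzaid type'' isomorphism; in the paper, the path-space analogue of Theorem \ref{thm:wfhAndMorse}) for \emph{arbitrary} $x,y$. That identification uses the Morse property of the energy functional on $\mathcal{P}_{x\to y}$, i.e.\ that $x$ and $y$ are non-conjugate, and the same hypothesis is needed for the regularity (transversality of Reeb chords from $\partial L$ to $\partial L'$) under which the filtered persistence module $\wfhBoldSmallPar{D_1^*N}{T^*_xN}{T^*_yN}$ is set up in this paper; it fails, e.g., for antipodal points on a round sphere. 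The paper closes this by first proving the bound when $y$ is non-conjugate to $x$ (a generic condition), then choosing non-conjugate $y_n\to y$, noting $T^*_{y_n}N\to T^*_yN$ in Hausdorff distance and $d(x,y_n)\to d(x,y)$, and invoking the semi-continuity of $\operatorname{pb}^+$. You need this reduction (or some substitute) for the statement as claimed.

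Two smaller points. Your parenthetical ``path components do not merge under the length filtration'' is false (two homotopic geodesics give components of sublevel sets that do merge); what actually saves the argument is exactly your fallback ``extremes'' reasoning: $HW^a=0$ for $a\le d$ by the spectrum computation, while the class of the minimizing geodesic at level $d+\delta$ maps to the nonzero point class in the colimit, so (using discreteness of the spectrum) there must be an infinite bar with left endpoint exactly $d$ --- this is essentially the paper's argument that the global minimum of index $0$ is a cycle which is never a boundary. Finally, your explicit monotonicity step passing from the truncated fibers $T^*_qN\cap M_{[a,b]}$ (which is what Theorem \ref{thm:mainTheorem} gives) to the full fibers is correct --- enlarging any set of an admissible quadruple shrinks the admissible family, so $\operatorname{pb}^+$ can only increase --- and is a point the paper leaves implicit, so including it is an improvement rather than a deviation.
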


In \cite{cieliebak2023loop} they construct a filtered isomorphism between between Morse theory on the loop space and symplectic homology of the unit coball bundle of $N$. In what follows, we describe an adaptation of \cite{cieliebak2023loop} to the setting of path spaces and Wrapped Floer homology of two cotangent fibers. The proof for the Lagrangian case is essentially the same as in \cite{cieliebak2023loop}, and we refer the reader to Section 5 in \cite{cieliebak2023loop} for the proof in the closed string setting.

We start with a brief review of Morse theory of the energy functional on a Riemannian manifold, we refer to \cite{abbondandolo2006floer} and the references therein for more details and proofs.

Given a Riemanninan manifold $(N,g)$, for every two points $x,y\in N$ we define the following space of paths from $p$ to $q$ in Sobolev class $W^{1,2}$:
\[
\mathcal{P}_{x\to y} := \left\lbrace \gamma \in W^{1,2}\left([0,1],N\right) \, \middle\vert\, \gamma(0)=x, \gamma(1)=y \right \rbrace.
\]
This is a Hilbert manifold.

The energy of a path $\gamma \in \mathcal{P}_{x\to y}$ is defined to be:
\[
E(\gamma) := \intop_0^1 \Vert \dot{\gamma(t)} \Vert^2_g dt\,.
\]
The critical points of the energy functional are geodesics of $g$ going from $x$ to $y$, parametrized proportionally to arc length.

If $x$ and $y$ are non-conjugate points, then $E$ is Morse, that is, the critical points of $E$ are non-degenerate. Fixing $x$, almost every $y\in N$ is non-conjugate to $x$. See, e.g \cite[Theorem 14.1 and Corollary 18.2]{milnor1963morse}. In what follows we assume non conjugacy in our computations, and we will later obtain the bound on $pb^+$ for any two general points via approximation and semi-continuity.

\begin{description}
    \item[Assumption:] $x$ and $y$ are non-conjugate.
\end{description}

Denote by $\hmBold{\mathcal{P}_{x\to y}}{\sqrt{E}}$ the persistence module given by the homology of the Morse complex of $E$ on $\mathcal{P}_{x\to y}$, filtered by the values of $\sqrt{E}$. (Since the Morse differential decreases energy, the homology is filtered by $E$ and hence also by $\sqrt{E}$, as square root is a monotone function.)
See \cite{abbondandolo2006floer} for the construction of Morse complex on the free loop space and on the based path spaces. See \cite{cieliebak2023loop} for the discussion on the filtration.

Consider the cotangent bundle of $N$ with the standard Liouville form and the induced symplectic structure, namely $\omega_{\operatorname{can}} = d\lambda_{\operatorname{can}} = d\left(\Sigma pdq\right)$.

Let $D_1^*N$ denote the unit coball bundle of $N$, namely 
\[
D_1^*N = \left\lbrace \left(p,q\right) \,\middle\vert\, q\in N,\, p\in T_q^*N,\, \Vert p \Vert^{}_g \le 1 \right\rbrace.
\]
This is a Liouville domain whose completion is symplectomorphic to $T^*N$.
The cotangent fibers $T^*_x N$ are exact conical Lagrangians, with $\lambda_{\operatorname{can}}\vert^{}_{T^*_x N} = 0 = d0$, and so, we pick the zero function $0\colon T^*_x N \to \mathbb R$ as the primitive of $\lambda_{\operatorname{can}}\vert^{}_{T^*_x N}$.

In \cite{cieliebak2023loop} an isomorphism of persistence modules is constructed between the symplectic cohomology of $D_1^*N$ filtered by the action functional and the persistence module given by the Morse complex on free loops, with filtration given by $\sqrt{E}$, extending the work of Viterbo \cite{viterbo2018functors} and of Abbondandollo-Schwartz \cite{abbondandolo2006floer}. An analogous proof works for the case of path spaces, providing the following theorem:

\begin{theorem}[Path space analogue of {\cite[Theorem 5.3]{cieliebak2023loop}}]\label{thm:wfhAndMorse}
There exists an isomorphism of persistence modules:
\[ 
    \Psi \colon \wfhBold{D_1^*N}{T^*_xN}{T^*_yN} \to \hmBold{\mathcal{P}_{x\to y}}{\sqrt{E}}.
\]
Namely, for every $\mu<\nu\in \mathbb{R}$ there exist isomorphisms of modules $\Psi_\mu, \Psi_\nu$ such that the following diagram commutes:
\begin{equation*}
    %
    \begin{tikzcd}
    {\operatorname{HW}^\mu\left(D_1^*N, T^*_xN \to T^*_yN\right) } \arrow[d, "\Psi_\mu"] \arrow[rr, "\iota^{}_{\mu\to\nu}"] &  & {\operatorname{HW}^\nu\left(D_1^*N, T^*_xN \to T^*_yN\right) } \arrow[d, "\Psi_\nu"] \\
    {\operatorname{HM}^\mu\left(\mathcal{P}_{x\to y}\,; \sqrt{E}\right)} \arrow[rr, "\iota^{}_{\mu\to\nu}"]                   &  & {\operatorname{HM}^\nu\left(\mathcal{P}_{x\to y}\,; \sqrt{E}\right)}.                 
    \end{tikzcd}
\end{equation*}
\end{theorem}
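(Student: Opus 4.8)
The plan is to adapt the construction of \cite{cieliebak2023loop}, which establishes a filtered isomorphism between symplectic cohomology of $D_1^*N$ and the Morse homology of the energy functional on the free loop space, to the setting of the based path space $\mathcal{P}_{x\to y}$ and wrapped Floer homology of the pair of cotangent fibers $T^*_xN, T^*_yN$. First I would recall the chain-level construction: on the wrapped side one uses a cofinal family of Hamiltonians that are quadratic (or have controlled growth) in the fiber coordinate $\|p\|_g$, so that the time-$1$ Hamiltonian chords from $T^*_xN$ to $T^*_yN$ correspond, via the Legendre transform, to critical points of the energy functional $E$ on $\mathcal{P}_{x\to y}$, i.e. to geodesics from $x$ to $y$; under the non-conjugacy assumption these are non-degenerate and the correspondence is bijective with matching indices. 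The key analytic input, exactly as in \cite{abbondandolo2006floer} and \cite{cieliebak2023loop}, is that the Abbondandolo–Schwarz moduli spaces of half-strips with Lagrangian boundary on the fibers, interpolating between Floer trajectories and Morse negative-gradient flow lines of $E$, are cut out transversally and compact, yielding a chain isomorphism $\Psi$ between the wrapped Floer complex and the Morse complex.

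The crucial new point relative to the unfiltered statement is the compatibility with the action/energy filtration. Here I would follow Section 5 of \cite{cieliebak2023loop}: one shows that along the Abbondandolo–Schwarz homotopy the relevant action is monotone (up to an error that vanishes in the adiabatic limit, or is absorbed by an explicit reparametrization), so that the chain isomorphism $\Psi$ intertwines the action filtration on $CW^\bullet(D_1^*N, T^*_xN\to T^*_yN)$ with the $\sqrt{E}$-filtration on the Morse complex $CM^\bullet(\mathcal{P}_{x\to y})$. Since on a critical point the Hamiltonian action of the corresponding chord equals (a constant multiple of) the length of the geodesic, which is $\sqrt{E}$ of that geodesic — the square root appearing precisely because $E$ is the integral of $\|\dot\gamma\|^2$ while the action/length scales linearly — one gets an isomorphism of \emph{filtered} complexes, hence of persistence modules, and this isomorphism commutes with the persistence morphisms $\iota_{\mu\to\nu}$ on both sides because those are induced by inclusions of sublevel subcomplexes. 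The commuting square in the statement is then the chain-level statement passed to homology and to the direct limit over the cofinal family of Hamiltonians.

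The main obstacle I expect is making the filtration statement precise: the naive Abbondandolo–Schwarz isomorphism is only a filtered \emph{quasi}-isomorphism, or respects filtration only up to an arbitrarily small shift $\epsilon$, and one must argue that these shifts can be taken to zero (or that the spectra are discrete and the shift can be shrunk below the spectral gap) to obtain an honest isomorphism of persistence modules rather than merely an interleaving. In \cite{cieliebak2023loop} this is handled by a careful choice of the quadratic Hamiltonians together with the no-escape / confinement estimates; I would invoke that argument essentially verbatim, noting that the only change is the Lagrangian boundary conditions $\gamma(0)\in T^*_xN$, $\gamma(1)\in T^*_yN$ in place of the periodicity condition, and that these boundary conditions are conical (the fibers are exact conical Lagrangians with primitive $0$), so all the confinement lemmas — in particular the no-escape Lemma \ref{lem:lagNoEscape} — apply without modification. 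A secondary technical point is transversality for the mixed Floer–Morse moduli spaces with these boundary conditions, which again is identical to the closed-string case since the fibers intersect the relevant level sets cleanly; I would simply cite \cite{abbondandolo2006floer} and Section 5 of \cite{cieliebak2023loop} for the details, emphasizing that the path-space version requires no new ideas.
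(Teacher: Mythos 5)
Your proposal is correct and follows essentially the same route as the paper, which itself gives no independent argument but simply defers to Section 5 of \cite{cieliebak2023loop} (building on \cite{abbondandolo2006floer}) and asserts that the adaptation from free loops to the based path space $\mathcal{P}_{x\to y}$ with the conical fiber boundary conditions is routine. Your sketch spells out precisely that adaptation (chord--geodesic correspondence, Abbondandolo--Schwarz hybrid moduli spaces, action-versus-$\sqrt{E}$ filtration matching, and the confinement estimates), so there is nothing to add.
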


\subsection{Proof of Theorem \ref{thm:cotBundleFibers}}
We first reduce to the case where $y$ is non-conjugate to $x$. By genericty, choose a sequence of points $y_n$ converging to $y$, such that every $y_n$ is non-conjugate to $x$. We obtain $\operatorname{d}(x,y_n) \to \operatorname{d}(x,y)$, and that the cotangent fibers converge in the Hausdorff distance, $T^*_{y_n}N \to T^*_{y}N$. Hence by the semi-continuity property of $\operatorname{pb}^+$:
\begin{multline*}  
    \operatorname{pb}_{T^*N}^+\left( T^*_x N,\, T^*_y N,\, S^*_a N,\, S^*_b N\right) \ge \\
    \ge \limsup_{n\to\infty} \operatorname{pb}_{T^*N}^+\left( T^*_{x_n} N,\, T^*_{y_n} N,\, S^*_a N,\, S^*_b N\right)  \ge \\
    \ge \lim_{n\to \infty} \frac {1}{\operatorname{d}\left(x,y_n\right)\cdot\left(b-a\right)} = \frac {1}{\operatorname{d}\left(x,y\right)\cdot\left(b-a\right)}.
\end{multline*}


Assuming $x$ and $y$ are non-conjugate, we will show that the barcode of $\wfhBoldSmallPar{D^*_1 N}{T^*_x}{T^*_y}$ has a bar of the form $\left(d,\infty\right)$, with $d=d\left(x,y\right)$. The result will then follow from Theorem $\ref{thm:mainTheorem}$ in the case of a semi-inifine bar.

Recall that by Theorem \ref{thm:wfhAndMorse} there is in isomorphism of persistence modules between $\wfhBoldSmallPar{D^*_1 N}{T^*_x}{T^*_y}$ and the Morse homology of the energy functional on the path space from $x$ to $y$, filtered by the square roof of the energy, namely $\hmBoldSmallPar{\mathcal{P}_{x\to y}}{\sqrt{E}}$.
Since the metric is complete, there exists a geodesic $\gamma \colon [0,1]\to \mathbb{R}$ from $x$ to $y$, realizing the distance between $x$ and $y$, that is, $\operatorname{length}(\gamma) = \operatorname{d}(x,y)$. Since $\gamma$ is a global minimum of $E$, it is of Morse index $0$.
Moreover, since $\gamma$ is parametrized proportionally to the arc length, $E(\gamma) = \operatorname{length}(\gamma)^2$. Thus, in any sublevel of $\sqrt{E}$ above the sublevel set $\lbrace \sqrt{E} < \operatorname{d}(x,y)\rbrace$, the geodesic $\gamma$ represents a closed chain in the Morse complex (since it is of index $0$ and no critical point is of index $-1$) and it is never exact, since it is a global minimum and represents the point class in the zero degree homology of the path space. Therefore it is a generator of a bar of the form $\left(d,\infty\right)$, proving that such a bar exists in the barcode. We thus obtain the desired bounds:
\begin{align*}
    &\operatorname{pb}_{T^*N}^+\left(T^*_x N,\, T^*_y N,\, S^*_a N,\, S^*_b N\right) \ge \frac{1}{d\cdot \left(b-a\right)}\text{,} \\
    \text{and } &\operatorname{pb}_{T^*N}^+\left(T^*_x N,\, T^*_y N,\, 0^{}_{T^*N},\, S^*_a N\right) \ge \frac{1}{d\cdot a}.
\end{align*}
Note that $x$ and $y$ in the theorem are completely interchangeable, since the distance function is symmetric $d(x,y)=d(y,x)$. Hence also the same bounds apply to 
\begin{align*}
    &\operatorname{pb}_{T^*N}^+\left(T^*_y N,\, T^*_x N,\, S^*_a N,\, S^*_b N\right) \ge \frac{1}{d\cdot \left(b-a\right)}, \\
    \text{and } &\operatorname{pb}_{T^*N}^+\left(T^*_y N,\, T^*_x N,\, 0^{}_{T^*N},\, S^*_a N\right) \ge \frac{1}{d\cdot a}.
\end{align*}

The interlinking result follows from Theorem \ref{thm:pb4AndInterlinking}, together with the anti-symmetry of $\operatorname{pb}^+$. 

\section{Discussion}\label{sec:discussion}
Several aspects of the connection between interlinking and Floer theory remain unexplored. Here, we highlight a few key points for further investigation.
In \cite{entov2022legendrian} interlinking with respect to non-autonomous Hamiltonians is also discussed, and is obtained via stabilization, namely, bounds on $\operatorname{pb}^+$ in $M\times T^*S^1$, where each set in in quadruple in $M$ is replaced with its product with the zero section.
Moreover it is conjectured there that the existence of a bar of the form $(a,\infty)$ in the barcode, should imply the stronger, non-autonomous interlinking.
In the setting layed out by our paper, one might approach this via a suitable filtered K\"unneth isomorphism for wrapped Floer homology of Lagrangian submanifolds. We intend to pursue this direction in a future work.

Our result for cotangent fibers and cosphere bundles in a cotangent bundle was based on a computation reducing wrapped Floer homology to Morse theory of another functional, in our case, the energy functional on paths in a Riemanninan manifold. Another case where Floer theory can be reduced to Morse theory is that of convex domains.
In \cite{abbondandolo2022symplectic}, an isomorphism between symplectic homology of convex domains, and Morse theory of a Clarke's dual action functional is established. We believe it would be interesting to explore an open string analogue of this isomorphism, and perhaps obtain new computations of barcodes in wrapped Floer homology of Lagrangians in such domains, which in turn will provide new examples of interlinking.

Moreover, in discussions with Entov and Polterovich, they informed the author of certain invariants of morphisms of persistence modules, which, in their setting of Symplectic field theory, can be appllied to prove interlinking in non conical configurations. These invariants are to appear in a yet to be published work of theirs. We believe that these invariants can also be applied to our setting of wrapped Floer homology of Lagrangians, obtaining analogous results.

\bibliographystyle{alpha}

\bibliography{Bibliography}

\begin{thebibliography}{CDSGO16}

\bibitem[AK22]{abbondandolo2022symplectic}
Alberto Abbondandolo and Jungsoo Kang.
\newblock Symplectic homology of convex domains and clarke’s duality.
\newblock {\em Duke Mathematical Journal}, 171(3):739--830, 2022.

\bibitem[AM19]{alves2019dynamically}
Marcelo~RR Alves and Matthias Meiwes.
\newblock Dynamically exotic contact spheres in dimensions $\geq 7$.
\newblock {\em Commentarii Mathematici Helvetici}, 94(3), 2019.

\bibitem[AS06]{abbondandolo2006floer}
Alberto Abbondandolo and Matthias Schwarz.
\newblock On the floer homology of cotangent bundles.
\newblock {\em Communications on Pure and Applied Mathematics: A Journal Issued
  by the Courant Institute of Mathematical Sciences}, 59(2):254--316, 2006.

\bibitem[BEP12]{buhovsky2012poisson}
Lev Buhovsky, Michael Entov, and Leonid Polterovich.
\newblock Poisson brackets and symplectic invariants.
\newblock {\em Selecta Mathematica}, 18:89--157, 2012.

\bibitem[Bou02]{bourgeois2002morse}
Fr{\'e}d{\'e}ric Bourgeois.
\newblock {\em A Morse-Bott approach to contact homology}.
\newblock PhD thesis, stanford university, 2002.

\bibitem[CB15]{persistence6_crawley2015decomposition}
William Crawley-Boevey.
\newblock Decomposition of pointwise finite-dimensional persistence modules.
\newblock {\em Journal of Algebra and its Applications}, 14(05):1550066, 2015.

\bibitem[CDSGO16]{persistence2_chazal2016structure}
Fr{\'e}d{\'e}ric Chazal, Vin De~Silva, Marc Glisse, and Steve Oudot.
\newblock {\em The structure and stability of persistence modules}, volume~10.
\newblock Springer, 2016.

\bibitem[CE12]{cieliebak2012stein}
Kai Cieliebak and Yakov Eliashberg.
\newblock {\em From Stein to Weinstein and back: symplectic geometry of affine
  complex manifolds}, volume~59.
\newblock American Mathematical Soc., 2012.

\bibitem[CHO23]{cieliebak2023loop}
Kai Cieliebak, Nancy Hingston, and Alexandru Oancea.
\newblock Loop coproduct in morse and floer homology.
\newblock {\em Journal of Fixed Point Theory and Applications}, 25(2):59, 2023.

\bibitem[EH22]{persistence1_edelsbrunner2022computational}
Herbert Edelsbrunner and John~L Harer.
\newblock {\em Computational topology: an introduction}.
\newblock American Mathematical Society, 2022.

\bibitem[EP17]{entov2017lagrangian}
Michael Entov and Leonid Polterovich.
\newblock Lagrangian tetragons and instabilities in hamiltonian dynamics.
\newblock {\em Nonlinearity}, 30(1):13, 2017.

\bibitem[EP22]{entov2022legendrian}
Michael Entov and Leonid Polterovich.
\newblock Legendrian persistence modules and dynamics.
\newblock {\em Journal of Fixed Point Theory and Applications}, 24(2):30, 2022.

\bibitem[Mil63]{milnor1963morse}
J.W. Milnor.
\newblock {\em Morse Theory}.
\newblock Annals of mathematics studies. Princeton University Press, 1963.

\bibitem[MS17]{mcduff2017introduction}
Dusa McDuff and Dietmar Salamon.
\newblock {\em Introduction to symplectic topology}, volume~27.
\newblock Oxford University Press, 2017.

\bibitem[Oh93]{oh1993floer}
Yong-Geun Oh.
\newblock Floer cohomology of lagrangian intersections and pseudo-holomorphic
  disks {I}.
\newblock {\em Communications on Pure and Applied Mathematics}, 46(7):949--993,
  1993.

\bibitem[Oh09]{oh2009unwrapped}
Yong-Geun Oh.
\newblock Unwrapped continuation invariance in lagrangian floer theory: energy
  and {$C^{0}$} estimates.
\newblock {\em arXiv preprint arXiv:0910.1131}, 2009.

\bibitem[Oh15]{oh2015symplectic}
Yong-Geun Oh.
\newblock {\em Symplectic Topology and Floer Homology: Volume 1, Symplectic
  Geometry and Pseudoholomorphic Curves}, volume~28.
\newblock Cambridge University Press, 2015.

\bibitem[Oud17]{persistence3_oudot2017persistence}
Steve~Y Oudot.
\newblock {\em Persistence theory: from quiver representations to data
  analysis}, volume 209.
\newblock American Mathematical Soc., 2017.

\bibitem[PRSZ20]{persistence4_polterovich2020topological}
Leonid Polterovich, Daniel Rosen, Karina Samvelyan, and Jun Zhang.
\newblock {\em Topological persistence in geometry and analysis}, volume~74.
\newblock American Mathematical Soc., 2020.

\bibitem[Rit13]{ritter2013topological}
Alexander~F Ritter.
\newblock Topological quantum field theory structure on symplectic cohomology.
\newblock {\em Journal of Topology}, 6(2):391--489, 2013.

\bibitem[RRK23]{rauchThesis}
Itamar Rosenfeld~Rauch and Michael Khanevsky.
\newblock On hofer girth and poisson brackets invariants / itamar rosenfeld
  rauch ; [supervision: Michael khanevsky]., 2023.

\bibitem[Vit18]{viterbo2018functors}
C~Viterbo.
\newblock Functors and computations in floer homology with applications part
  {II}.
\newblock {\em arXiv e-prints}, pages arXiv--1805, 2018.

\bibitem[ZC04]{persistence5_zomorodian2004computing}
Afra Zomorodian and Gunnar Carlsson.
\newblock Computing persistent homology.
\newblock In {\em Proceedings of the twentieth annual symposium on
  Computational geometry}, pages 347--356, 2004.

\end{thebibliography}

\bigskip

\noindent
\begin{tabular}{l}
{\bf Yaniv Ganor} \\
School of Mathematical Sciences,\\
Faculty of Sciences,\\
HIT - Holon Institute of Technology \\

Golomb St. 52, Holon 5810201, Israel\\
{\em E-mail:}  \texttt{ganory@gmail.com}
\end{tabular}

\end{document}